\definecolor{linkred}{rgb}{0.7,0.2,0.2}
\definecolor{linkblue}{rgb}{0,0.2,0.6}
\numberwithin{figure}{section}
\DeclareFontFamily{OMS}{rsfs}{\skewchar\font'60}
\DeclareFontShape{OMS}{rsfs}{m}{n}{<-5>rsfs5 <5-7>rsfs7 <7->rsfs10 }{}
\DeclareSymbolFont{rsfs}{OMS}{rsfs}{m}{n}
\DeclareSymbolFontAlphabet{\scr}{rsfs}
\DeclareSymbolFontAlphabet{\scr}{rsfs}
\DeclareFontFamily{U}{mathx}{\hyphenchar\font45}
\DeclareFontShape{U}{mathx}{m}{n}{
      <5> <6> <7> <8> <9> <10>
      <10.95> <12> <14.4> <17.28> <20.74> <24.88>
      mathx10
      }{}
\DeclareSymbolFont{mathx}{U}{mathx}{m}{n}
\DeclareMathAccent{\wcheck}{0}{mathx}{"71}
\DeclareMathOperator{\codim}{codim}
\DeclareMathOperator{\Hom}{Hom}
\DeclareMathOperator{\Id}{Id}
\DeclareMathOperator{\img}{img}
\DeclareMathOperator{\Pic}{Pic}
\DeclareMathOperator{\rank}{rank}
\DeclareMathOperator{\red}{red}
\DeclareMathOperator{\reg}{reg}
\DeclareMathOperator{\sing}{sing}
\DeclareMathOperator{\Sym}{Sym}
\DeclareMathOperator{\supp}{supp}
\DeclareMathOperator{\tor}{tor}
\newcommand{\sA}{\scr{A}}
\newcommand{\sB}{\scr{B}}
\newcommand{\sC}{\scr{C}}
\newcommand{\sD}{\scr{D}}
\newcommand{\sE}{\scr{E}}
\newcommand{\sF}{\scr{F}}
\newcommand{\sG}{\scr{G}}
\newcommand{\sH}{\scr{H}}
\newcommand{\sHom}{\scr{H}\negthinspace om}
\newcommand{\sJ}{\scr{J}}
\newcommand{\sL}{\scr{L}}
\newcommand{\sM}{\scr{M}}
\newcommand{\sO}{\scr{O}}
\newcommand{\sQ}{\scr{Q}}
\newcommand{\sS}{\scr{S}}
\newcommand{\sT}{\scr{T}}
\newcommand{\sV}{\scr{V}}
\newcommand{\sW}{\scr{W}}
\newcommand{\cC}{\mathcal C}
\newcommand{\bA}{\mathbb{A}}
\newcommand{\bB}{\mathbb{B}}
\newcommand{\bC}{\mathbb{C}}
\newcommand{\bD}{\mathbb{D}}
\newcommand{\bE}{\mathbb{E}}
\newcommand{\bF}{\mathbb{F}}
\newcommand{\bG}{\mathbb{G}}
\newcommand{\bN}{\mathbb{N}}
\newcommand{\bP}{\mathbb{P}}
\newcommand{\bQ}{\mathbb{Q}}
\newcommand{\bR}{\mathbb{R}}
\newcommand{\bX}{\mathbb{X}}
\newcommand{\bZ}{\mathbb{Z}}
\theoremstyle{plain}
\newtheorem{thm}{Theorem}[section]
\newtheorem{cor}[thm]{Corollary}
\newtheorem{defn}[thm]{Definition}
\newtheorem{lem}[thm]{Lemma}
\newtheorem{prop}[thm]{Proposition}
\theoremstyle{remark}
\newtheorem{assumption}[thm]{Assumption}
\newtheorem{asswlog}[thm]{Assumption w.l.o.g.}
\newtheorem{claim}[thm]{Claim}
\newtheorem{c-n-d}[thm]{Claim and Definition}
\newtheorem{construction}[thm]{Construction}
\newtheorem{explanation}[thm]{Explanation}
\newtheorem{notation}[thm]{Notation}
\newtheorem{rem}[thm]{Remark}
\newtheorem*{rem-nonumber}{Remark}
\newtheorem{setting}[thm]{Setting}
\numberwithin{equation}{thm}
\setlist[enumerate]{label=(\thethm.\arabic*), before={\setcounter{enumi}{\value{equation}}}, after={\setcounter{equation}{\value{enumi}}}}
\newcommand{\into}{\hookrightarrow}
\newcommand{\wtilde}{\widetilde}
\newcommand{\what}{\widehat}
\newcommand\CounterStep{\addtocounter{thm}{1}\setcounter{equation}{0}}
\newcommand{\factor}[2]{\left. \raise 2pt\hbox{$#1$} \right/\hskip -2pt\raise -2pt\hbox{$#2$}}
\newcommand{\Preprint}[1]{}
\newcommand{\subversionInfo}{}
\newcommand{\svnid}[1]{}
\newcommand{\approvals}[2][Approval]{}
\renewcommand{\phi}{\varphi}
\tikzset{commutative diagrams/arrow style=tikz}
\author{Stefan Kebekus}
\address{Stefan Kebekus, Mathematisches Institut, Albert-Ludwigs-Universität Freiburg, Ernst-Zermelo-Straße 1, 79104 Freiburg im Breisgau, Germany}
\email{\href{mailto:stefan.kebekus@math.uni-freiburg.de}{stefan.kebekus@math.uni-freiburg.de}}
\urladdr{\url{https://cplx.vm.uni-freiburg.de}}
\author{Erwan Rousseau}
\address{Erwan Rousseau, Univ Brest, CNRS UMR 6205,	Laboratoire de Mathematiques de Bretagne Atlantique\\ F-29200 Brest, France}
\email{\href{mailto:erwan.rousseau@univ-brest.fr}{erwan.rousseau@univ-brest.fr}}
\urladdr{\href{http://eroussea.perso.math.cnrs.fr/}{http://eroussea.perso.math.cnrs.fr}}
\author{Frédéric Touzet}
\address{Frédéric Touzet, Univ Rennes, CNRS, IRMAR - UMR 6625, 35000, Rennes, France}
\email{\href{mailto:frederic.touzet@univ-rennes.fr}{frederic.touzet@univ-rennes.fr}}
\urladdr{\href{https://irmar.univ-rennes.fr/en/node/273}{https://irmar.univ-rennes.fr/en/node/273}}
\keywords{special manifolds and $\cC$-pairs, adapted differentials, irregularity}
\subjclass[2020]{32C99, 32H99}
\title{Irregularities of special $\cC$-pairs}
\date{\today}
\DeclareMathOperator{\Alb}{Alb}
\DeclareMathOperator{\Div}{Div}
\DeclareMathOperator{\orb}{orb}
\DeclareMathOperator{\res}{res}
\DeclareMathOperator{\snc}{snc}
\theoremstyle{plain}
\theoremstyle{remark}
\newtheorem{conj}[thm]{Conjecture}
\newtheorem{reminder}[thm]{Reminder}
\begin{document}

\approvals[Approval for Abstract]{Erwan & yes \\ Frédéric & yes \\ Stefan & yes}
\begin{abstract}
\selectlanguage{british}

This paper studies irregularity-type invariants of special $\cC$-pairs, or
``geometric orbifolds'' in the sense of Campana.  Under mild assumptions on the
singularities, we show that the augmented irregularity of a $\cC$-pair $(X,D)$
is bounded by its dimension.  This generalizes earlier results of Campana, and
strengthens known results even in the classic case where $X$ is a projective
manifold and $D = 0$.  The proof builds on new extension results for adapted
forms, analysis of foliations on Albanese varieties, and constructions of
Bogomolov sheaves using strict wedge subspaces of adapted forms.

\end{abstract}

\maketitle
\tableofcontents

%
%
\svnid{$Id: 01-intro.tex 138 2026-01-12 07:18:20Z touzet $}
\selectlanguage{british}

\section{Introduction}
\subversionInfo
\approvals{Erwan & yes \\ Frédéric & yes \\ Stefan & yes}

Special varieties were introduced in a series of influential papers by Campana,
\cite{Cam04, MR2831280}, as complex-projective manifolds where the classic
Bogomolov-Sommese inequality is strict, or equivalently, as complex-projective
manifolds that do not dominate a ``geometric orbifold'' or ``$\cC$-pair'' of
general type.  It is conjectured that the notion of ``specialness'' characterizes
``potential density'', both in the arithmetic setting (where ``density'' refers
to sets of rational points) and in the analytic setting (where ``density''
refers to entire curves).

This article studies \emph{irregularities} of special manifolds and of mildly
singular $\cC$-pairs that appear in the minimal model program.

\subsubsection*{Invariants of special manifolds}
\approvals{Erwan & yes \\ Frédéric & yes \\ Stefan & yes}

The starting point is a fundamental observation of Campana: If $X$ is a
complex-projective manifold that is special, then the irregularity $q(X):=h⁰(X,
Ω_X¹)$ is always bounded by the dimension of $X$, \cite[Sect.~5.2]{Cam04}.  Using
his results on the invariance of specialness under étale coverings, he concludes
that the \emph{augmented irregularity},
\[
  \wtilde{q}(X) :=\sup \left\{ q\bigl(\wtilde{X}\bigr) \::\: \wtilde X → X \text{ a finite étale cover} \right\},
\]
is likewise bounded by $\dim X$.

\subsubsection*{Invariants of special pairs}
\approvals{Erwan & yes \\ Frédéric & yes \\ Stefan & yes}

Given that the natural objects of Campana's theory are ``geometric orbifolds''
or ``$\cC$-pairs'', where adapted differentials take the role that ordinary
differentials play for ordinary spaces, it is natural to ask for
generalizations.

\begin{conj}[Irregularities of special $\cC$-pairs, \protect{\cite[Conjecture~6.17]{orbiAlb1}}]\label{conj:1-1}%
  Let $(X, D)$ be a $\cC$-pair where the analytic variety $X$ is compact and
  Kähler.  If $(X, D)$ is special, then its augmented irregularity is bounded by
  the dimension, $q⁺(X, D) ≤ \dim X$.
\end{conj}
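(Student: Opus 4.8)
The plan is to prove Conjecture~\ref{conj:1-1} by transporting Campana's strategy for manifolds into the orbifold setting, the decisive new input being an Albanese morphism for $\cC$-pairs together with extension results for adapted forms. After replacing $(X,D)$ by a suitable birational model, the mild-singularity hypotheses let me assume that $X$ is smooth, that the support of $D$ has simple normal crossings, and (using birational invariance of specialness and of the irregularity under $\cC$-morphisms) that all constructions below are birationally well defined. First I would recall from \cite{orbiAlb1} the adapted Albanese morphism $\alpha\colon (X,D)\to A$, where $A$ is a complex semi-torus with $\dim A = q(X,D) = h^0\bigl(X,\Omega^1_{(X,D)}\bigr)$, characterised by the property that every global adapted $1$-form is the pullback of a translation-invariant form on $A$. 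It then suffices to prove that $\alpha$ is surjective, since surjectivity gives $q(X,D)=\dim A\le\dim X$ immediately.

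To prove surjectivity I would argue by contradiction, producing a Bogomolov sheaf and thereby violating specialness. Recall Campana's characterisation: $(X,D)$ is special if and only if every rank-one subsheaf $\sL\subseteq\Omega^p_{(X,D)}$ satisfies $\kappa(\sL)<p$ for all $p\ge 1$, equivalently the adapted Bogomolov--Sommese inequality is everywhere strict \cite{Cam04, MR2831280}. So suppose $Z:=\overline{\alpha(X)}\subsetneq A$. The image of an Albanese map generates the semi-torus, so $Z$ is not a translate of any proper sub-semi-torus. By the Ueno--Kawamata structure theory for subvarieties of semi-tori, the stabiliser $B:=\operatorname{Stab}(Z)$ is then a proper sub-semi-torus, and the image $\bar Z$ of $Z$ in the quotient $A/B$ is a subvariety of general type with $\dim\bar Z = p\ge 1$. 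Composing $\alpha$ with $A\to A/B$ yields, after Stein factorisation on a modification, a fibration $g\colon X\to\bar Z$ whose general fibres define a foliation $\sF$ on $X$.

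The heart of the argument is to convert the general-type-ness of $\bar Z$ into a Bogomolov sheaf on $(X,D)$. I would take the subspace $V\subseteq H^0\bigl(X,\Omega^1_{(X,D)}\bigr)$ of adapted $1$-forms pulled back from $A/B$; these annihilate $\sF$, and their $p$-fold wedge spans a line
\[
  \textstyle\bigwedge^{p} V \;\subseteq\; H^0\bigl(X,\Omega^{p}_{(X,D)}\bigr)
\]
generating a saturated rank-one subsheaf $\sL\subseteq\Omega^{p}_{(X,D)}$ that agrees birationally with $g^*\omega_{\bar Z}$. Since $\bar Z$ is of general type, $\kappa(\sL)=\kappa(\bar Z)=p$, so $\sL$ is a Bogomolov sheaf of the pair, contradicting specialness; hence $\alpha$ is surjective and $q(X,D)\le\dim X$. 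This is the step I expect to be the main obstacle: making $\bigwedge^{p}V$ into an honest subsheaf of \emph{adapted} $p$-forms requires controlling pole orders along $D$ and extending forms across $\sing X$ (the new extension results advertised in the abstract), while identifying $\kappa(\sL)$ with $\kappa(\bar Z)$ in the Kähler, non-algebraic setting demands the semi-torus analogue of Ueno's theorem together with the foliation analysis on $A$.

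Finally, to pass from $q$ to the augmented invariant $q^{+}$, I would invoke stability of specialness of $\cC$-pairs under adapted (orbifold-étale) covers, generalising Campana's corresponding result for manifolds \cite{Cam04}. For any such cover $(\wtilde X,\wtilde D)\to(X,D)$ the source is again a special $\cC$-pair with $\dim\wtilde X=\dim X$, so the bound $q(\wtilde X,\wtilde D)\le\dim\wtilde X=\dim X$ just established applies uniformly. Taking the supremum over all covers yields $q^{+}(X,D)\le\dim X$, as claimed.
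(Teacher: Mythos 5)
Your proposal has two genuine gaps, and the first one is fatal to the whole strategy. The entire surjectivity argument rests on the existence of an adapted Albanese morphism $\alpha\colon (X,D)\to A$ with $\dim A = q(X,D)$, such that every adapted $1$-form is pulled back from $A$. No such object exists for $\cC$-pairs: the paper states explicitly in its outline that ``there is in general no adapted Albanese map'', and that even in the special cases where a suitably-defined Albanese does exist, the classical equality between the irregularity and the dimension of the Albanese variety fails (citing \cite[Sect.~7.1]{orbialb2}). Since your contradiction argument needs both the equality $\dim A = q(X,D)$ and the universal property of pulled-back forms, it collapses at the first step. This failure is precisely what forces the paper onto a different route: instead of an Albanese, it uses Catanese-style \emph{strict wedge subspaces} of adapted one-forms (Proposition~\ref{prop:6-5}) to produce, directly from the inequality $q(X,D,\gamma)>\dim X$, a foliation and then a $G$-invariant Bogomolov sheaf (Corollary~\ref{cor:6-11}), which is pushed down by invariant direct image and contradicts specialness via Corollary~\ref{cor:5-4}.

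The second gap is the reduction from $q^+$ to $q$. You invoke stability of specialness under ``adapted (orbifold-\'etale) covers'' and take a supremum, but the augmented irregularity $q^+(X,D)$ is a supremum of $q(X,D,\gamma)=h^0\bigl(\what{X},\,\Omega^{[1]}_{(X,D,\gamma)}\bigr)$ over \emph{arbitrary} covers $\gamma$, not just orbifold-\'etale ones. For a general cover there is no $\cC$-pair structure on $\what{X}$ that is again special and whose adapted forms compute $q(X,D,\gamma)$, so no invariance statement applies. This is exactly why the paper emphasizes that even for $X$ a projective manifold with $D=0$ its Theorem~\ref{thm:1-3} is strictly stronger than Campana's classical result, which handles \'etale covers only. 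The paper resolves this by never descending the problem to a special pair on the cover: it works $G$-equivariantly on a Galois cover and its equivariant log resolution (using the extension Theorem~\ref{thm:1-6} to embed $\pi^{[*]}\Omega^{[1]}_{(X,D,\gamma)}$ into honest logarithmic differentials), sums the wedge-subspace sheaf over the Galois group to force $G$-invariance, and uses the $G$-equivariant Bogomolov--Sommese statement (Corollary~\ref{cor:5-2}) on the original pair. Note finally that the statement you set out to prove is still a conjecture in the paper; the authors establish it only under the locally uniformizable or dlt hypotheses of Theorem~\ref{thm:1-3}, so a proof of the general compact K\"ahler case would in any event need new input beyond what the paper provides.
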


For the convenience of the reader not familiar with the theory, we recall the
definition of ``augmented irregularity'' in brief.  The reference paper
\cite{orbiAlb1} introduces and discusses all relevant notions in great detail.

\begin{defn}[Irregularity, augmented irregularity, \cite{orbiAlb1}]
  Let $(X, D)$ be a compact $\cC$-pair.  If $γ : \what{X} → X$ is any cover, we
  refer to the number
  \begin{align*}
    q(X, D, γ) & := h⁰ \left(\what{X}, Ω^{[1]}_{(X,D,γ)} \right) \\
    \intertext{as the \emph{irregularity} of $(X, D, γ)$.  The number}
    q⁺(X, D) & := \sup \bigl\{q(X, D, γ) \::\: γ \text{ a cover} \bigr\} ∈ ℕ ∪ \{∞\}
  \end{align*}
  is the \emph{augmented irregularity} of the $\cC$-pair $(X, D)$.
\end{defn}

\subsection{Main result}
\approvals{Erwan & yes \\ Frédéric & yes \\ Stefan & yes}

The main result of this paper answers Conjecture~\ref{conj:1-1} in the positive,
for all pairs that will typically appear in minimal model theory.  We refer the
reader to \cite{KM98} for the definition of ``divisorially log terminal'' pairs.

\begin{thm}[Boundedness of augmented irregularity]\label{thm:1-3}%
  Let $(X, D)$ be a $\cC$-pair that satisfies one of the following conditions.
  \begin{enumerate}
  \item\label{il:1-3-1} The analytic variety $X$ is compact Kähler and $(X,D)$
    is locally uniformizable.

  \item\label{il:1-3-2} The analytic variety $X$ is projective and $(X,D)$ is
    divisorially log terminal (=dlt).
  \end{enumerate}
  If $(X,D)$ is special, then $q⁺(X, D) ≤ \dim X$.
\end{thm}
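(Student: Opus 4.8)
The plan is to argue by contradiction, transporting Campana's strategy for the classical case $D = 0$ to the adapted setting. It suffices to bound $q(X, D, γ) ≤ \dim X$ for every cover $γ$, since $q⁺$ is the supremum of these. So suppose some cover $γ : \what{X} → X$ violates this, i.e.
\[
  h⁰\bigl(\what{X}, Ω^{[1]}_{(X,D,γ)}\bigr) > \dim X = \dim \what{X}.
\]
First I would move to a situation where the adapted forms behave like honest closed differentials: the global sections of $Ω^{[1]}_{(X,D,γ)}$ give a space $V$ of closed holomorphic (resp.\ logarithmic) $1$-forms that generate a morphism to an Albanese-type (semi-)abelian variety $A$, the orbifold Albanese attached to the pair, of dimension $\dim V > \dim X$. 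The image $Y := \alb(\what X) ⊆ A$ is therefore a proper subvariety that nonetheless generates $A$ as a group.

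Next I would analyse the translation-invariant foliation that $V$ cuts out on $A$ together with its interaction with $Y$. Applying the Ueno--Kawamata fibration theorem to the subvariety $Y ⊆ A$, let $B ⊆ A$ be the stabilizer subtorus, so that $Y → Y/B ⊆ A/B$ is the Iitaka fibration and $Y/B$ is of general type. Now either $\dim(A/B) = 0$ or $\dim(A/B) > 0$. In the first case $Y$ is stable under all of $A$, hence $Y = A$; but then $\dim X ≥ \dim Y = \dim A = \dim V > \dim X$, a contradiction.

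In the remaining case $\dim(A/B) > 0$ I would manufacture a Bogomolov sheaf and contradict specialness. Pulling back a top-degree (log-)canonical form from the general type quotient $Y/B$ along the induced meromorphic map to $A/B$ singles out, via a \emph{strict wedge subspace} $W ⊆ V$ of dimension $p = \dim(Y/B)$, a rank-one subsheaf $\sL ↪ Ω^{[p]}_{(X,D)}$ of adapted $p$-forms whose Kodaira dimension equals $p$. A rank-one subsheaf of the adapted $p$-forms with maximal possible Kodaira dimension is precisely a Bogomolov sheaf, and its existence contradicts the assumed specialness of $(X, D)$. Since both cases are impossible, $q(X, D, γ) ≤ \dim X$ for every $γ$, and taking the supremum proves the theorem.

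The main obstacle I anticipate lies in the two foundational inputs hidden above. First, one must show that adapted forms, and in particular the wedge powers cut out by $W$, extend as \emph{reflexive} adapted differentials across both the boundary $\Supp D$ and the singular locus of $X$, so that the resulting $\sL$ is a genuine subsheaf of $Ω^{[p]}_{(X,D)}$ to which a Bogomolov--Sommese-type estimate applies; this is exactly where the hypotheses enter, and I expect the locally uniformizable case and the dlt case to require separate extension arguments before being funneled into a common estimate. Second, the construction of $\sL$ takes place on the cover $\what X$, so one must descend it to an obstruction against specialness of the pair $(X, D)$ itself, matching multiplicities and controlling ramification so that the Kodaira dimension is preserved. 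I expect these extension results for adapted forms to be the technical heart of the whole argument.
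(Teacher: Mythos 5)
You correctly identify the two endpoints of the paper's argument: the extension of adapted reflexive $1$-forms to a $G$-equivariant log resolution of the cover (this is exactly Theorem~\ref{thm:1-6}, resp.\ Claim~7.2, handled separately in the locally uniformizable and dlt cases), and the need to turn the resulting data into a Bogomolov-type obstruction against specialness of $(X,D)$. The gap lies in the geometric core between these endpoints, and it is one the paper itself flags in its outline: \emph{there is no ``orbifold Albanese attached to the pair'' of dimension $\dim V$}. If $V$ is a proper subspace of $H⁰\bigl(\wtilde{X}, Ω¹_{\wtilde{X}}(\log \wtilde{D})\bigr)$ --- and the space of adapted forms is a proper subspace in general --- then the image of $H_1(\wtilde{X} ∖ \wtilde{D}, ℤ)$ under the projection to $V^*$ need not be discrete, so $V$ does not integrate to a morphism onto any complex Lie group of dimension $\dim V$; equivalently, the foliation cut out by $V$ typically has Zariski-dense leaves. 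Even in the cases where a $\cC$-pair Albanese can be defined, the paper points out (citing \cite[Sect.~7.1]{orbialb2}) that its dimension does not compute the irregularity. This is precisely why the paper abandons Campana's Albanese/Ueno--Kawamata route.

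The obvious fallback --- using the honest quasi-Albanese $A$ of $\wtilde{X} ∖ \wtilde{D}$, whose dimension is $h⁰\bigl(\wtilde{X}, Ω¹_{\wtilde{X}}(\log \wtilde{D})\bigr) ≥ \dim V > \dim X$, so that Ueno--Kawamata does apply to the degenerate image --- fails at the descent step. The general-type quotient $Y/B$ is cut out by the invariant $1$-forms annihilating $\mathrm{Lie}(B)$, so the rank-one sheaf obtained by pulling back its canonical sheaf is generically generated by wedges of \emph{arbitrary} logarithmic $1$-forms in that annihilator, which has no reason to be contained in your $V$. Your assertion that this pullback ``singles out, via a strict wedge subspace $W ⊆ V$,'' a subsheaf of adapted $p$-forms is therefore unjustified: the sheaf you build lies in $Ω^p_{\wtilde{X}}(\log \wtilde{D})$ but not in $Λ^{[p]} π^{[*]} Ω^{[1]}_{(X,D,γ)}$, hence it does not push forward into $Ω^{[p]}_{(X,D,γ)}$ and cannot feed into the descent mechanism (Proposition~\ref{prop:5-1}, Corollary~\ref{cor:5-4}); at best it witnesses non-specialness of the log pair $(\what{X}, \what{D})$ upstairs, which specialness of the $\cC$-pair $(X,D)$ does not forbid. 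The paper's fix inverts your logic: the strict wedge subspace $V' ⊆ V$ comes \emph{first}, by pure linear algebra from $\dim V > \dim \wtilde{X}$ (Proposition~\ref{prop:6-5}); the role of the missing Albanese is played by the meromorphic map $η_{V'}$ attached to the linear system of $\det \sV'$; the Castelnuovo--de Franchis-type Proposition~\ref{prop:6-6}, proved by integrating closed forms over fibres (Proposition~\ref{prop:2-21}), shows that the forms of $V'$ annihilate the fibres of $η_{V'}$, forcing $\dim \img η_{V'} = \rank \sV'$; and $G$-invariance is arranged by symmetrizing, $𝒲 = \sum_{g ∈ G} g^* \sV'$ (Corollary~\ref{cor:6-11}), so that $π_* \det 𝒲 ⊆ Ω^{[\rank 𝒲]}_{(X,D,γ)}$ is a $G$-invariant Bogomolov sheaf built entirely out of adapted forms. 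Without an argument producing a Bogomolov sheaf \emph{inside the adapted differentials}, your proof does not close.
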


\begin{rem}[Novelty of the result]
  Even in cases where $X$ is a complex-projective manifold and $D = 0$,
  Theorem~\ref{thm:1-3} is new and stronger than Campana's classic result, which
  considers étale coverings only.
\end{rem}

\begin{rem}[Earlier results on Albanese irregularities]
  Theorem~\ref{thm:1-3} generalizes and strengthens earlier results, including
  \cite[Theorem~8.1]{orbialb2}, on the ``augmented Albanese irregularity''
  $q⁺_{\Alb}(X, D)$.  The augmented Albanese irregularity is a variant of the
  augmented irregularity.  It is geometrically meaningful, always bounded by
  $q⁺(X, D)$, but hard to control and compute in practise.
\end{rem}

The proof of Theorem~\ref{thm:1-3} relies in part on the following extension
theorem for adapted reflexive differentials, which might be of independent
interest.

\begin{thm}[Extension of adapted forms on dlt pairs]\label{thm:1-6}%
  Let $(X, D)$ be an algebraic, quasi-projective $\cC$-pair that is dlt.  Then,
  adapted reflexive 1-forms on $(X,D)$ extend to log resolutions of
  singularities.
\end{thm}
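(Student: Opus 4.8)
The plan is to reduce the statement to the extension theorem for reflexive differential forms on log canonical spaces (Greb--Kebekus--Kov\'acs--Peternell) by passing to a cover on which adapted forms become ordinary reflexive forms. First I would choose a finite cover $γ : \hat X → X$ adapted to $(X,D)$, ramifying to order $m_i$ along each component $D_i$ of finite multiplicity $m_i$ (a Kawamata-type cover), and keeping the components of multiplicity $∞$ as a reduced boundary $\hat D ⊆ \hat X$. By construction $γ$ is crepant for the pair, $K_{\hat X} + \hat D = γ^*(K_X + D)$, and the adapted cotangent sheaf pulls back so that global adapted reflexive $1$-forms on $(X, D)$ correspond — via $γ$ — to global reflexive logarithmic $1$-forms on $(\hat X, \hat D)$. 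This is precisely the content of the definitions recalled in \cite{orbiAlb1}, where the irregularity is computed on such covers.

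Next I would record that $(\hat X, \hat D)$ is again log canonical. Since $(X,D)$ is dlt, hence log canonical, and $γ$ is crepant, the behaviour of discrepancies under finite crepant morphisms (\emph{cf.}~\cite{KM98}) shows that $(\hat X, \hat D)$ is log canonical as well; as $(X,D)$ is moreover dlt, one keeps enough of the snc structure to identify a big open subset of $\hat X$ that is snc. On $(\hat X, \hat D)$ the extension theorem for reflexive logarithmic differentials applies: for any log resolution $\hat{π} : \hat Y → \hat X$ with reduced boundary $\hat D_Y$, every reflexive logarithmic $1$-form extends to a genuine logarithmic form on $\hat Y$. Thus the adapted form we started with extends across the exceptional locus of $\hat{π}$.

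Then I would transport this extension back to a log resolution $π : \tilde X → X$ of $(X, \Supp D)$. Equipping $\tilde X$ with the $\cC$-structure that keeps the multiplicity $m_i$ on each strict transform and assigns $∞$ to every $π$-exceptional divisor, the adapted cotangent sheaf $\Omega^{[1]}_{(\tilde X, \tilde D)}$ of the smooth pair $(\tilde X, \tilde D)$ is locally free, and the adapted-cover correspondence of the first paragraph now runs on $(\tilde X, \tilde D)$. To conclude I would compare the two resolutions through a common dominating model — the normalisation of $\hat Y \times_X \tilde X$, suitably resolved — and check that the form extended on $\hat Y$ descends to a section of $\Omega^{[1]}_{(\tilde X, \tilde D)}$. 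Since being adapted is a condition on the pole order along each prime divisor of $\tilde D$, it suffices to verify it at the generic point of each such divisor: along $π$-exceptional divisors the assigned multiplicity is $∞$ and the logarithmic bound is exactly the adapted one, while along a strict transform $π^{-1}_* D_i$ the morphism $π$ is an isomorphism near the generic point — because the non-snc locus of the dlt pair $(X,D)$ has codimension $≥ 2$, so that generic points of $D_i$ are snc points — whence the multiplicity-$m_i$ condition transfers verbatim from $D_i$.

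The hard part will be the middle and third steps taken together: controlling the singularities of the adapted cover $\hat X$ well enough that the log-canonical extension theorem genuinely applies, and, above all, reconciling the two towers of covers and resolutions. The fibre product $\hat Y \times_X \tilde X$ is neither normal nor smooth, so one must normalise and possibly resolve further, and then track how $\Omega^{[1]}_{(\tilde X,\tilde D)}$ relates to the reflexive differentials on that common model. All of the genuinely new geometry is concentrated over the exceptional loci, where strict transforms of distinct $D_i$ meet the exceptional divisors; but this happens in codimension $≥ 2$, so by reflexivity it never has to be controlled beyond the generic-point computation above — which is why I expect the residue and pole-order bookkeeping in codimension one, rather than any deeper input, to carry the proof once the extension theorem is in place on the cover.
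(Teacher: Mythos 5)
Your plan has two genuine gaps, and they sit exactly where the paper's actual proof does its real work.

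\textbf{First, the quantification over covers.} Unwinding Definition~\ref{def:4-3} and Setting~\ref{setting:4-1}, Theorem~\ref{thm:1-6} asserts Inclusion~\eqref{eq:4-3-1} for \emph{every} $q$-morphism $γ : \what{X} → X$ and every log resolution of $\bigl(\what{X}, γ^*\lfloor D\rfloor\bigr)$ — not for one well-chosen cover. Your argument constructs a single Kawamata-type cover and (at best) verifies the inclusion there; your final transport step even lands on a resolution of $X$ itself, which is not an object the definition quantifies over. For an \emph{arbitrary} cover, $\what{X}$ is merely normal with arbitrary ramification over $D$ and possibly very bad singularities (already for $X = ℂ²$, $D=0$, covers $z^n = f(x,y)$ can fail to be log canonical), so there is no crepancy relation and \cite{GKKP11} simply does not apply to $\bigl(\what{X},\what{D}\bigr)$. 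Passing from one good cover to all covers is not soft: the paper's Proposition~\ref{prop:4-12} does a reduction to \emph{adapted} covers via Galois closures, fibre products and $G$-invariant push-forward identities from \cite{orbiAlb1}, and even after that reduction one still needs extension for all adapted covers, which is supplied by Núñez's results \cite{Nunez24} (resting on \cite{KS18}-type local methods), not by a statement about one global cover. Your proposal contains no mechanism for this step.

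\textbf{Second, the pole structure of the extension target.} The sheaf on the right of \eqref{eq:4-3-1} is $Ω^p_{\wtilde{X}}\bigl(\log π^*γ^*\lfloor D\rfloor\bigr)$: log poles are permitted \emph{only} along the preimage of $\supp γ^*\lfloor D\rfloor$. Exceptional divisors lying over the complement — in particular over the klt-type singularities of the dlt pair and over the finite-multiplicity components $D_i$ — must be pole-free; holomorphic, not logarithmic, extension is required there. Your Step~3 explicitly assigns multiplicity $∞$ to \emph{every} $π$-exceptional divisor and then checks the ``logarithmic bound'', i.e.\ you verify membership in a strictly larger sheaf: what you prove is the lc-style statement with log poles along all exceptional divisors, which is weaker than Theorem~\ref{thm:1-6}. (For your one cover this could be partially repaired, since the precise form of the extension theorem in \cite{GKKP11} allows log poles only along the preimage of the non-klt locus; but then you must prove that $\bigl(\what{X},\what{D}\bigr)$ is lc with non-klt locus contained in $\supp\what{D}$, which needs general position of the auxiliary Kawamata branching, and note that your claimed correspondence in Step~1 is only an inclusion — it fails to be an equality along the auxiliary ramification divisors.) This pole-free extension over the klt locus, uniformly in the cover, is precisely the hard content: it is why the paper's proof runs through the locality statements (Propositions~\ref{prop:4-10} and \ref{prop:4-11}), Graf's dichotomy for dlt pairs \cite{Gra13}, Núñez's klt-orbifold extension and Proposition~\ref{thm:4-13}, and the new Flenner-type Theorem~\ref{thm:A-1} to dispose of codimension-$≥3$ strata — rather than through the lc extension theorem of \cite{GKKP11}.
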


\subsection{Outline of the proof}
\approvals{Erwan & yes \\ Frédéric & yes \\ Stefan & yes}

For the proof of the classic result on irregularities of special
complex-projective manifolds, it suffices to show that the Albanese map of a
special manifold is necessarily surjective.  Aiming for the contrapositive,
Campana studies manifolds whose Albanese is \emph{not} surjective.  Building on
earlier of work of Kawamata, Kobayashi, and Ueno on positivity in sheaves of
differentials on varieties of maximal Albanese dimension, he constructs a sheaf
of differentials where equality in the Bogomolov-Sommese inequality is attained,
showing that the underlying space cannot be special.

In our setting, where adapted differentials take the role that ordinary
differentials play for ordinary spaces, there is in general no ``adapted
Albanese map''.  Even in special cases where a suitably-defined Albanese does
exist, it is known that the classical equality between the irregularity and the
dimension of the Albanese variety is not true in general
\cite[Sect.~7.1]{orbialb2}.

We overcome this problem by considering the classic Albanese of suitable covers,
where adapted differentials define a foliation.  Though these will typically have
Zariski dense leaves, we can leverage ideas from Catanese's work on generalized
Castelnuovo-De Franchis theorems and ``strict wedge subspaces of
differentials'', in order to obtain positivity results for the foliated variety
that can be used in lieu of the classic arguments.

For sheaves of one-forms, these arguments work particularly well, and provide
the following partial generalization of Campana's statement on the invariance of
specialness under étale cover, \cite[Sect.~5.2]{Cam04} and
\cite[Prop.~10.11]{MR2831280}.

\begin{cor}[Adapted one-forms on covers of spacial pairs]\label{cor:1-7}%
  Let $(X, D)$ be a projective $\cC$-pair that is dlt, let $γ : \what{X}
  \twoheadrightarrow X$ be any cover and let $ℒ ⊆ Ω^{[1]}_{(X,D,γ)}$ be coherent
  of rank one.  If $(X,D)$ is special, then the $\cC$-Kodaira-Iitaka dimension
  of $ℒ$ is bounded by one: $κ_{\cC}(ℒ) < 1$.
\end{cor}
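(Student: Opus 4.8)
The plan is to prove the contrapositive: assuming a rank-one coherent $\sL \subseteq \Omega^{[1]}_{(X,D,\gamma)}$ with $\kappa_{\cC}(\sL) \geq 1$, I will construct a dominant $\cC$-rational map from $(X,D)$ onto an orbifold curve of general type, contradicting specialness. Passing to the saturation of $\sL$ inside $\Omega^{[1]}_{(X,D,\gamma)}$ leaves $\kappa_{\cC}$ unchanged, so I may assume $\sL$ is reflexive of rank one. The first step converts the adapted form into an honest differential: after a log resolution, the extension Theorem~\ref{thm:1-6} produces a rank-one subsheaf $\sL_Y$ of the sheaf of logarithmic one-forms on a smooth projective model $Y$ dominating $\what{X}$, with $\kappa(\sL_Y) \geq \kappa_{\cC}(\sL) \geq 1$.

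The engine of the proof is then the one-form case of the theory of Bogomolov sheaves. Since $\sL_Y$ is a rank-one subsheaf of $\Omega^1_Y(\log)$, the Bogomolov--Sommese inequality forces $\kappa(\sL_Y) \leq 1$, hence $\kappa(\sL_Y)=1$; and the structure theorem for Bogomolov sheaves---which in the one-form case is the generalized Castelnuovo--De Franchis theorem---identifies the Iitaka map of $\sL_Y$ with a fibration $h : Y \to B$ onto a smooth curve, the saturation of $\sL_Y$ being $h^{*}\omega_{(B,\Delta_B)}$ for the orbifold base $(B,\Delta_B)$ recording the multiple fibres of $h$. Because $\kappa(\sL_Y)=1$, one obtains $\deg(K_B+\Delta_B)>0$, so $(B,\Delta_B)$ is an orbifold curve of general type. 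This is the precise sense in which one-forms are favourable: the foliation defined by $\sL_Y$ has algebraic leaves and yields a genuine fibration, rather than the Zariski dense leaves that must be handled by positivity estimates in the higher-degree situation.

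The remaining and decisive step is to descend $h$ through the cover $\gamma$. As $\sL_Y$ is pulled back from the $\gamma$-adapted sheaf $\Omega^{[1]}_{(X,D,\gamma)}$, the symmetric powers $\sL^{[\otimes m]}$ are $\gamma$-adapted symmetric differentials attached to $(X,D)$ itself; after replacing $\gamma$ by its Galois closure, the Iitaka fibration $h$ is equivariant for the deck group, so its Stein factorization descends to a $\cC$-rational fibration $(X,D)\dashrightarrow (C,\Delta_C)$ whose orbifold base is again of general type. A positive-dimensional $\cC$-map onto an orbifold of general type is exactly what specialness forbids, so $\kappa_{\cC}(\sL)<1$. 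I expect this descent to be the main obstacle: $\gamma$ is an arbitrary cover rather than the canonical orbifold cover, so one must verify that the Bogomolov fibration on $Y$ is genuinely compatible with $\gamma$ and that the induced multiplicities on $(C,\Delta_C)$ keep it of general type---this is where Theorem~\ref{thm:1-6} and the analysis of the Albanese of $Y$, which controls the orbifold multiplicities, do the real work.
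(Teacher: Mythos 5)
Your instinct is right that the heart of the matter is Castelnuovo--De~Franchis: the paper's own proof is built on exactly that degeneration (two independent sections of a rank-one sheaf of one-forms wedge to zero, giving a ``strict $1$-wedge subspace''). But your first step contains a genuine gap. The inequality $\kappa(\sL_Y)\geq\kappa_{\cC}(\sL)$ is false for an arbitrary cover $\gamma$: the $\cC$-Kodaira--Iitaka dimension is computed from the $\cC$-symmetric powers $\Sym^{[m]}_{\cC}\sL$, which are by definition \emph{saturations} inside $\Sym^{[m]}_{\cC}\Omega^{[1]}_{(X,D,\gamma)}$ and therefore pick up fractional positivity along $D$ that ordinary symmetric powers of $\sL_Y$ on a log resolution $Y$ cannot see. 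Concretely, take $X=\bP^1$, $D=\tfrac12(p_1+\dots+p_5)$, $\gamma=\Id_X$ and $\sL=\Omega^{[1]}_{(X,D,\Id)}$: then $\Sym^{[m]}_{\cC}\sL$ has degree $m/2$ for $m$ even, so $\kappa_{\cC}(\sL)=1$, while on the smooth model $Y=\bP^1$ any rank-one subsheaf $\sL_Y\subseteq\Omega^1_{\bP^1}=\sO_{\bP^1}(-2)$ has $\kappa(\sL_Y)=-\infty$. (This pair is of general type, hence not special, so the corollary is not contradicted; but the inequality you rely on fails, and with it the reduction to an honest Bogomolov sheaf of ordinary log one-forms.) The missing idea is precisely the paper's Steps~0.1--0.2: first replace $\gamma$ by an \emph{adapted} Galois cover, using \cite[Lem.~2.36]{orbiAlb1} and Proposition~\ref{prop:5-1} to transport $\sL$ without losing $\cC$-Kodaira dimension --- only on an adapted cover does one have $\Sym^{[m]}_{\cC}\sL=\sL^{[\otimes m]}$, by \cite[Obs.~4.12]{orbiAlb1} --- and then extract $m$-th roots of two independent sections of $\sL^{[\otimes m]}$ on a yet higher cover, so that the pullback of $\sL$ itself acquires two independent sections $\tau_0,\tau_1$. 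Only after this reduction does Theorem~\ref{thm:1-6} (which concerns one-forms only, not their symmetric tensors) turn the data into honest logarithmic forms on a resolution, and the relation $\tau_0\wedge\tau_1=0$ is then automatic because both sections lie in a sheaf of rank one.

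The second gap is the descent step, which you correctly flag as the main obstacle but do not close; the paper's point is that one should never descend the fibration at all. Rather than produce a Castelnuovo--De~Franchis fibration $h:Y\to B$ and push it down through $\gamma$ --- which would require comparing Campana's orbifold-base multiplicities computed on $(X,D)$ with those of $h$, a delicate matter for an arbitrary cover --- the paper makes the construction equivariant at the level of sheaves: it sums the sheaf $\sV$ generated by $\tau_0,\tau_1$ over the Galois group, $\sW=\sum_{g\in G}g^*\sV$, shows via strict wedge subspaces that $\det\sW$ is still a Bogomolov sheaf (Corollary~\ref{cor:6-11}), pushes this \emph{sheaf} forward to $\what{X}$ as a subsheaf of $\Omega^{[\rank\sW]}_{(X,D,\gamma)}$, and concludes by Corollary~\ref{cor:5-4} that special pairs admit no Bogomolov $G$-sheaves. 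Since specialness is characterized here through the absence of Bogomolov sheaves, no equivalence with ``no dominant $\cC$-map onto a general-type orbifold'' is needed, and the Albanese of $Y$ plays no role in this argument. Descending a $G$-invariant sheaf is robust; descending a fibration together with its orbifold multiplicities is exactly the step your sketch leaves open.
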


\subsection{Acknowledgements}
\approvals{Erwan & yes \\ Frédéric & yes \\ Stefan & yes}

The authors would like to thank Finn Bartsch, Frédéric Campana and Ariyan
Javanpeykar for long and fruitful discussions.

The work on this paper was carried out in part while Stefan Kebekus visited the
\foreignlanguage{french}{Université de Bretagne Occidentale} at Brest.  He would
like to thank the department for its hospitality and the pleasant working
atmosphere.

%
%
\svnid{$Id: 02-notation.tex 118 2025-12-15 14:26:08Z rousseau $}
\selectlanguage{british}

\section{Notation and known results}
\subversionInfo
\approvals{Erwan & yes \\ Frédéric & yes \\ Stefan & yes}

This paper works with complex spaces.  With very few exceptions, we follow the
notation of the standard reference texts \cite{CAS, DemaillyBook}.  This section
clarifies less commonly-used notation and recalls a few well-known results for
later reference.  A full introduction to the theory of $\cC$-pairs is, however,
out of scope.  We refer the reader to the reference \cite{orbiAlb1} for
definitions and a very detailed introduction to all the material used here.  For
the reader's convenience, we include precise references to \cite{orbiAlb1}
throughout the text, whenever a term of $\cC$-pair theory appears for the first
time.

\subsection{Global assumptions and standard notation}
\approvals{Erwan & yes \\ Frédéric & yes \\ Stefan & yes}

An \emph{analytic variety} is a reduced, irreducible complex space.  For
clarity, we refer to holomorphic maps between analytic varieties as
\emph{morphisms} and reserve the word \emph{map} for meromorphic mappings.

\begin{defn}[Big and small sets]
  Let $X$ be an analytic variety.  An analytic subset $A ⊊ X$ is called
  \emph{small} if it has codimension two or more.  An open set $U ⊆ X$ is called
  \emph{big} if $X∖U$ is analytic and small.
\end{defn}

\begin{defn}[$q$-morphisms]
  Quasi-finite morphisms between normal analytic varieties of equal dimension
  are called \emph{$q$-morphisms}.
\end{defn}

Following the literature, we use square brackets to denote reflexive tensor
operations.

\begin{notation}[Reflexive tensor operations]
  Let $X$ be a normal analytic variety and let $ℒ$ be a torsion free coherent
  sheaf of $𝒪_X$-modules.  Write
  \begin{align*}
    ℒ^{[⊗ n]} & := (ℒ^{⊗ n})^{**}, & Λ^{[n]} ℒ & := (Λ^n ℒ)^{**}, \\
    \Sym^{[n]} ℒ & := (\Sym^n ℒ)^{**}, & \det ℒ & := (Λ^{\rank ℒ} ℒ)^{**}.
  \end{align*}
  If $φ : X → Y$ is a morphism and $ℱ$ a coherent sheaf on $Y$, write $φ^{[*]} ℱ
  := (φ^* ℱ)^{**}$.
\end{notation}

If $X$ is any analytic variety, we denote the sheaf of Kähler differentials by
$Ω^•_X$.  We recall the notation for reflexive logarithmic differentials.

\begin{notation}[NC locus]
  Let $X$ be a normal analytic variety and let $D$ be a Weil $ℚ$-divisor on $X$.
  Write $(X,D)_{\reg} ⊆ X$ for the maximal open set where $X$ is smooth and $D$
  has normal crossing support.
\end{notation}

\begin{notation}[Differentials with logarithmic poles]
  Let $X$ be a normal analytic variety and let $D$ be a Weil $ℚ$-divisor on $X$.
  \begin{enumerate}
  \item If $X$ is smooth and $D$ has nc support, we will often write $Ω^p_X(\log
    D)$ to denote the sheaves of Kähler differentials with logarithmic poles
    along $\supp D$.

  \item\label{il:2-5-2} Denote the inclusion of the nc locus by $ι :
    (X,D)_{\reg} ↪ X$ and write
    \[
      Ω^{[p]}_X(\log D) = ι_* Ω^p_{(X,D)_{\reg}} \bigl(\log D|_{(X,D)_{\reg}} \bigr).
    \]

  \item If $π : Y → X$ is any morphism from a smooth variety $Y$ where $π^{-1}
    \supp D$ is of pure codimension one and has normal crossings, write
    $Ω^p_Y(\log π^* D)$ to denote the sheaves of Kähler differentials with
    logarithmic poles along the set $π^{-1}\supp D$.
  \end{enumerate}
\end{notation}

\begin{rem}
  The sheaf $Ω^{[p]}_X(\log D)$ in \ref{il:2-5-2} is reflexive, in particular
  coherent.
\end{rem}

For later reference, we note the following lemma on the behaviour of pull-back
for differentials under product maps.  The proof is elementary and left to the
reader.

\begin{lem}[Pull-back of differentials under product maps]\label{lem:2-7}%
  Let $(φ_i : X \dashrightarrow Y_i)_{i = 1, …, a}$ be a finite number of
  rational maps between complex manifolds and let
  \[
    φ := φ_1 ⨯ ⋯ ⨯ φ_a : X \dashrightarrow Y_1 ⨯ ⋯ ⨯ Y_a
  \]
  be the associated product map.  Then, there exists a dense open subset $X° ⊆
  X$ where all maps are well-defined and where the following subsheaves of
  $Ω¹_{X°}$ agree,
  \[
    \img d (φ|_{X°}) = \sum_{i = 1}^a \img d (φ_i|_{X°}) ⊆ Ω¹_{X°}.  \eqno \qed
  \]
\end{lem}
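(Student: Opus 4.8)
The plan is to reduce everything to the canonical direct-sum decomposition of the cotangent sheaf of a product, together with the functoriality of $d$ under composition. The statement is purely local and involves only honest morphisms once we restrict to a suitable open set, so there are no analytic subtleties to worry about beyond that restriction.

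First I would fix the open set. Each rational map $\phi_i$ is holomorphic away from a proper analytic subset $Z_i \subsetneq X$ (its indeterminacy locus), so its domain of definition $X \setminus Z_i$ is dense and open. Setting $X^\circ := X \setminus \bigcup_{i} Z_i$, a finite intersection of dense open sets, we obtain a dense open subset on which every $\phi_i$ — and hence the product $\phi = \phi_1 \times \cdots \times \phi_a$ — is a genuine morphism of complex manifolds. On $X^\circ$ all the differentials $d(\phi|_{X^\circ})$ and $d(\phi_i|_{X^\circ})$ are honest maps of sheaves, so both sides of the asserted equality are well-defined coherent subsheaves of $\Omega^1_{X^\circ}$ and it suffices to prove the identity there.

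Next I would invoke the standard decomposition $\Omega^1_Y = \bigoplus_{i=1}^a \operatorname{pr}_i^* \Omega^1_{Y_i}$ for $Y = Y_1 \times \cdots \times Y_a$ with projections $\operatorname{pr}_i$, where the $i$-th summand is included via the split injection $d\operatorname{pr}_i : \operatorname{pr}_i^* \Omega^1_{Y_i} \to \Omega^1_Y$. Pulling back along $\phi$ and using the tautological relation $\operatorname{pr}_i \circ \phi = \phi_i$ yields $\phi^* \Omega^1_Y = \bigoplus_{i} \phi_i^* \Omega^1_{Y_i}$. The chain rule $d(\operatorname{pr}_i \circ \phi) = d\phi \circ \phi^*(d\operatorname{pr}_i)$ then identifies the restriction of $d\phi$ to the $i$-th summand $\phi_i^* \Omega^1_{Y_i}$ with $d\phi_i$. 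Since the image of a morphism out of a finite direct sum is the sum of the images of its restrictions to the individual summands, I conclude $\img d(\phi|_{X^\circ}) = \sum_{i=1}^a \img d(\phi_i|_{X^\circ})$, which is exactly the claim.

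The only point requiring a little care — and the nearest thing to an obstacle — is checking that the decomposition $\phi^* \Omega^1_Y = \bigoplus_i \phi_i^* \Omega^1_{Y_i}$ is genuinely compatible with the differential maps, i.e. that the chain rule is being applied to the correct summand inclusions $\phi^*(d\operatorname{pr}_i)$. Once the functoriality $d(g \circ f) = df \circ f^*(dg)$ is written down cleanly this is immediate, and because we have passed to $X^\circ$ there are no indeterminacy or regularity issues left to address.
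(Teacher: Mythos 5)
Your proof is correct: the paper explicitly leaves this lemma to the reader as elementary and gives no argument of its own, and your route --- restricting to the common domain of definition, decomposing $\Omega^1_{Y_1 \times \cdots \times Y_a} = \bigoplus_i \mathrm{pr}_i^* \Omega^1_{Y_i}$, and using the chain rule $d(\mathrm{pr}_i \circ \phi) = d\phi \circ \phi^*(d\mathrm{pr}_i)$ to identify the restriction of $d\phi$ to the $i$-th summand with $d\phi_i$ --- is exactly the intended standard argument. The final step (the image of a map out of a finite direct sum is the sum of the images of its restrictions to the summands) is sound, so the proposal is complete.
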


\subsection{\texorpdfstring{$\cC$}{C}-pairs and adapted morphisms}
\approvals{Erwan & yes \\ Frédéric & yes \\ Stefan & yes}

The key notion of the present paper is the $\cC$-pair.  We recall the definition
in brief and refer to the reference paper \cite{orbiAlb1} for details.

\begin{defn}[\protect{$\cC$-pairs, \cite[Sect.~2.5]{orbiAlb1}}]
  A $\cC$-pair is a pair $(X, D)$ where $X$ is a normal analytic variety and $D$
  a Weil $ℚ$-divisor $D$ is of the form
  \[
    D = \sum_i \frac{m_i-1}{m_i}·D_i,
  \]
  with $m_i ∈ ℕ^{≥ 2} ∪ \{∞\}$ and $\frac{∞-1}{∞} = 1$.  If $(X, D)$ is a
  $\cC$-pair, it will sometimes be convenient to consider the following Weil
  $ℚ$-divisor
  \[
    D_{\orb} := \sum_{i \:|\: m_i < ∞} \frac{1}{m_i}·D_i ∈ ℚ\Div(X).
  \]
\end{defn}

\begin{defn}[\protect{Adapted morphism, \cite[Sect.~2.5]{orbiAlb1}}]
  Given a $\cC$-pair $(X, D)$, a $q$-morphism $γ : \what{X} → X$ is called
  \emph{adapted for $(X, D)$} if $γ^* D_{\orb}$ is integral.  It is called
  \emph{strongly adapted for $(X, D)$} if $γ^* D_{\orb}$ is reduced.
\end{defn}

\subsection{Linear systems in reflexive sheaves}
\label{sec:2-3}
\approvals{Erwan & yes \\ Frédéric & yes \\ Stefan & yes}

If $X$ is a compact manifold, $ℒ ∈ \Pic(X)$ a line bundle and $L ⊆ H⁰ \bigl(
X,\, ℒ \bigr)$ a non-trivial space of sections, complex geometry frequently
considers the meromorphic map $φ_{L,ℒ} : X \dasharrow ℙ (L)$, given at general
points by $x ↦ \ker \bigl( σ ↦ σ(x) \bigr)$.  Parts of this paper discuss
analogous constructions in cases where $X$ is potentially singular and $ℒ$ is
reflexive of rank one.  To avoid any confusion, we clarify the setup in detail.

\begin{notation}[Projectivized linear spaces]\label{not:2-10}%
  If $L$ is a linear space, write $ℙ(L)$ for the space of hyperplanes in $L$, or
  equivalently, for the space of one-dimensional subspaces in $L^*$.
\end{notation}

\begin{rem}[Subspaces and projections]\label{rem:2-11}%
  With Notation~\ref{not:2-10}, an inclusion $L_1 ↪ L_2$ of linear spaces
  therefore corresponds a linear projection $ℙ(L_2) \dasharrow ℙ(L_1)$, given at
  general points by $H ↦ H ∩ L_1$.
\end{rem}

\begin{lem}[Spaces of sections in torsion free sheaves]\label{lem:2-12}%
  If $X$ is a compact, normal analytic variety, $ℒ$ a torsion free, rank-one
  sheaf of $𝒪_X$-modules and $L ⊆ H⁰ \bigl( X,\, ℒ \bigr)$ a non-trivial space
  of sections, there exists a meromorphic map $φ_{L,ℒ} : X \dasharrow ℙ (L)$,
  given at general points by $x ↦ \ker \bigl( σ ↦ σ(x) \bigr)$.
\end{lem}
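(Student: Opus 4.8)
The plan is to write $φ_{L,\sL}$ in homogeneous coordinates coming from a basis of $L$, and to reduce the existence of the meromorphic map to the elementary principle that a tuple of global meromorphic functions, not all zero, determines a meromorphic map to projective space. Fix a basis $σ_0, …, σ_n$ of $L$ and regard each $σ_i$ as a global section of $\sL$. The one genuinely non-formal point, and the step I expect to be the main obstacle, is to make sense of the ``ratios'' $σ_i/σ_j$ as honest meromorphic functions on all of $X$, even though $\sL$ is only assumed torsion free of rank one --- not reflexive, and not locally free. This is exactly where normality enters: for every point $x ∈ X$ the local ring $\sO_{X,x}$ is an integral domain, so the torsion-free rank-one module $\sL_x$ embeds into the one-dimensional vector space $\sL_x ⊗ K_x$ over the fraction field $K_x := \operatorname{Frac} \sO_{X,x}$. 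Choosing any $K_x$-generator of $\sL_x ⊗ K_x$ identifies the germs $(σ_i)_x$ with elements of $K_x$; the quotients $(σ_i)_x / (σ_j)_x$ do not depend on this choice and hence glue to global meromorphic functions on $X$.

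Next I would check that these functions define a meromorphic map $φ : X \dashrightarrow \bP(L)$. Because $X$ is irreducible and $\sL$ is torsion free, a nonzero global section cannot have vanishing germ at any point; consequently, at every $x$ at least one of the germs $(σ_i)_x$ is nonzero in $\sL_x ⊗ K_x$. Dividing the tuple $\bigl( (σ_0)_x, …, (σ_n)_x \bigr)$ by such a nonvanishing entry produces a local holomorphic representative of $[σ_0 : ⋯ : σ_n]$ with values in $\bP(L)$, identified via the chosen basis with $\bP^n$. A collection of meromorphic functions admitting such local representatives is precisely the datum of a meromorphic map into projective space, so $φ$ exists; note that no separate extension step across the non-locally-free locus is required, as the construction already lives on all of $X$.

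Finally I would identify $φ$ with the assignment in the statement and verify that it is independent of the chosen basis. On the big open set $U ⊆ X$ where $\sL$ is locally free --- its complement is small because a torsion-free rank-one sheaf on a normal variety is locally free away from a set of codimension at least two --- and away from the common zero locus of the $σ_i$, the germs $(σ_i)_x$ become actual values in the one-dimensional fibre $\sL_x ⊗ k(x)$, and $[σ_0(x) : ⋯ : σ_n(x)]$ is exactly the hyperplane $\ker\bigl( σ ↦ σ(x) \bigr) ⊆ L$, under the identification of $\bP(L)$ with the space of hyperplanes in $L$ from Notation~\ref{not:2-10}. A change of basis acts on the homogeneous coordinates and on $\bP(L)$ through the same element of $\operatorname{PGL}(L)$, so the map is canonical and coincides with $φ_{L,\sL}$ as described.
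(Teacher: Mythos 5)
Your Step~1 is essentially sound: on a normal variety the local rings are integral domains, every stalk of $ℒ$ has rank one, a nonzero section of a torsion-free sheaf has nonzero germ at every point, and the ratios $σ_i/σ_j$ are well-defined meromorphic germs everywhere. (To glue them into \emph{global} meromorphic functions you should add that, by coherence, $ℒ$ embeds into $𝒪_X$ on a neighbourhood of any point after clearing denominators, so that the germwise ratios are locally represented by quotients of holomorphic functions; you pass over this, but it is a fixable point.) The proof breaks in Step~2. The assertion that dividing the tuple of germs by a nonvanishing entry ``produces a local holomorphic representative of $[σ_0 : ⋯ : σ_n]$ with values in $ℙ(L)$'' is false: a germ being nonzero in $ℒ_x ⊗ K_x$ means it is not \emph{identically} zero near $x$, not that it is invertible at $x$, so the ratios are in general only meromorphic at $x$. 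Concretely, take $X = ℙ²$, $ℒ = 𝒪_{ℙ²}(1)$ and $L = \langle z_0, z_1 \rangle$. At the point $[0:0:1]$ both germs are nonzero, yet neither $z_0/z_1$ nor $z_1/z_0$ is holomorphic there; the map $[z_0:z_1] : ℙ² \dasharrow ℙ¹$ has an indeterminacy point at $[0:0:1]$ and admits no holomorphic --- not even continuous --- representative near it.

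This is not a mere slip of wording, because your final step rests on it: the claim that a collection of meromorphic functions admitting such local representatives ``is precisely the datum of a meromorphic map into projective space'' is not a definition but a theorem, and it is precisely the theorem to be proven. A meromorphic map in the sense used throughout the paper (Remmert's) is an analytic graph $Γ ⊆ X ⨯ ℙ(L)$, proper over $X$ and biholomorphic over a dense open set; the paper needs exactly this, since it forms images, composes with projections and computes $\dim \img$ of these maps. Producing such a graph from locally given tuples $[F_0 : ⋯ : F_n]$ of holomorphic functions requires showing that the closure of the graph over the locus where some $F_i ≠ 0$ is analytic --- and this is the extension problem flagged in the remark immediately following Lemma~\ref{lem:2-12}: in the analytic category, maps defined on dense open sets need not extend meromorphically. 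The paper's own proof supplies the missing input by quoting \cite[Thm.~3.5]{Rossi68} to pass to a modification $π : \wtilde{X} → X$ on which $(π^* ℒ)/\tor$ is invertible, where the classical linear-system construction applies, and then descending. Your construction could be completed in the same spirit --- clear denominators locally, blow up the resulting ideal sheaf, and apply Remmert's proper mapping theorem to the image of the blow-up in $X ⨯ ℙ(L)$ --- but some step of this kind is unavoidable, so your remark that ``no separate extension step \dots is required'' assumes away the crux of the lemma.
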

\begin{proof}
  Recall from \cite[Thm.~3.5]{Rossi68} that there exists a bimeromorphic
  modification, say $π : \wtilde{X} → X$, where $ℱ := (π^* ℒ) / \tor$ is
  invertible.  Let $F ⊆ H⁰ \bigl( \wtilde{X},\, ℱ \bigr)$ be the image of $L$
  under the natural inclusion
  \[
    π^* / \tor : H⁰ \bigl( X,\, ℒ \bigr) ↪ H⁰ \bigl( \wtilde{X},\, ℱ \bigr),
  \]
  set $φ_{L, ℒ} := φ_{F, ℱ} ◦ π^{-1}$ and observe that the map does not depend
  on the choice of the modification.
\end{proof}

\begin{rem}
  The reader coming from algebraic geometry might wonder why
  Lemma~\ref{lem:2-12} requires any proof at all: in contrast to the algebraic
  setting, holomorphic morphisms defined on a Zariski dense open do not in
  general extend to meromorphic mapping on the full space.
\end{rem}

\begin{notation}[Meromorphic maps induced by linear systems in torsion free sheaves]
  Throughout the present paper, we use the notation $φ_{L,ℒ}$ to denote the
  meromorphic map of Lemma~\ref{lem:2-12}.  In cases where $L = H⁰\bigl( X,\, ℒ
  \bigr)$, we write $φ_ℒ$ instead of $φ_{H⁰\bigl( X,\, ℒ \bigr), ℒ}$.
\end{notation}

\subsection{SNC morphisms}
\approvals{Erwan & yes \\ Frédéric & yes \\ Stefan & yes}

While snc pairs are the logarithmic analogues of smooth spaces, snc morphisms
are the analogues of smooth maps.  We recall the main properties for the
reader's convenience and refer to \cite[Sect.~2.B]{GKKP11} for a full discussion
and for references to the literature.  While \cite{GKKP11} works with algebraic
varieties, the results mentioned here carry over to the analytic setting without
change.

\begin{notation}[Intersection of boundary components]\label{not:2-15}%
  Let $(X,D)$ be a pair,
  \[
    D = α_1·D_1 + … + α_n·D_n.
  \]
  If $I ⊆ \{1, …, n\}$ is not empty, write $D_I := ∩_{i ∈ I} D_i$ for the
  (potentially non-reduced) intersection of complex spaces.  If $I$ is empty,
  set $D_I := X$.
\end{notation}

\begin{reminder}[{Description of snc pairs, \cite[Rem.~2.8]{GKKP11}}]%
  In the setup of Notation~\ref{not:2-15}, the pair $(X,D)$ is snc if and only
  if the following holds for every index set $I ⊆ \{1, …, n\}$ with $D_I \not =
  ∅$.
  \begin{enumerate}
  \item The intersection $D_I$ is smooth.

  \item The codimension equals $\codim_XD_I=|I|$.
  \end{enumerate}
\end{reminder}

\begin{defn}[{Snc morphism, \cite[Def.~2.9]{GKKP11}}]%
  Let $(X,D)$ be an snc pair and let $φ : X \twoheadrightarrow T$ be a
  surjective morphism to a complex manifold.  Call $φ$ \emph{an snc morphism of}
  $(X,D)$ if the following holds for every index set $I ⊆ \{1, …, n\}$ with $D_I
  \not = ∅$.
  \begin{enumerate}
  \item The restricted morphism $φ|_{D_I} : D_I → T$ is smooth.

  \item The restricted morphism $φ|_{D_I}$ has relative dimension $\dim X-\dim T
  -|I|$.
  \end{enumerate}
\end{defn}

\begin{reminder}[All morphisms are generically snc]%
  If $(X,D)$ is an snc pair and $φ : X → T$ is a surjective morphism to a
  complex manifold, then there exists a dense, Zariski open subset of $T$ over
  which $φ$ is an snc morphism.
\end{reminder}

\begin{reminder}[Fibre bundle structure of proper morphisms]\label{remi:2-19}%
  Let $(X,D)$ be a logarithmic snc pair and let $φ : X \twoheadrightarrow T$ be
  a proper snc morphism of $(X,D)$.  Then, $φ : X ∖ D \twoheadrightarrow T$ is a
  differentiable fibre bundle.
\end{reminder}

\begin{reminder}[Fibers and relative differentials of snc morphisms]\label{rem:2-20}%
  Let $(X,D)$ be a logarithmic snc pair and let $φ : X \twoheadrightarrow T$ be
  an snc morphism of $(X,D)$.  Then, there exists a natural exact sequence of
  locally free sheaves,
  \[
    \begin{tikzcd}
      0 \ar[r] & φ^* Ω¹_T \ar[r, hook, "dφ"] & Ω¹_X(\log D) \ar[r, two heads, "q"] & \underbrace{\factor{Ω¹_X(\log D)}{φ^* Ω¹_Y}}_{=: Ω¹_{X/T}(\log D)} \ar[r] & 0.
    \end{tikzcd}
  \]
  If $t ∈ T$ is any point with fibre $X_t := φ^{-1}(t)$ and inclusion $ι_t : X_t
  → X$, then $X_t$ is smooth and $D_t := ι^*_t D$ is logarithmic and snc.  There
  exists a natural identification between restrictions rendering the following
  diagram commutative,
  \[
    \begin{tikzcd}[row sep=1cm, column sep=1.2cm]
      ι^* Ω¹_X(\log D) \ar[r, two heads, "ι^*_t q"] \ar[d, equal] & ι^*_t Ω¹_{X/T}(\log D) \ar[d, hook, two heads, "\text{ident.}"] \\
      ι^* Ω¹_X(\log D) \ar[r, two heads, "dι_t"'] & Ω¹_{X_t}(\log D_t)
    \end{tikzcd}
  \]
\end{reminder}

\subsection{Forms on fibre spaces}
\approvals{Erwan & yes \\ Frédéric & yes \\ Stefan & yes}

We recall (and slightly generalize) a fundamental fact of Kähler geometry: If $φ
: X → T$ is a fibration and $σ$ a closed 1-form on $X$ that vanishes on one
fibre, then $σ$ comes from $T$.

\begin{prop}\label{prop:2-21}%
  Let $(X,D)$ be a logarithmic snc pair and let $φ : X \twoheadrightarrow T$ be
  an snc morphism of $(X,D)$.  Assume that $X$ is Kähler and that $φ$ is proper.
  If $σ ∈ H⁰(X, Ω¹_X(\log D))$ is closed, then the following statements are
  equivalent.
  \begin{enumerate}
  \item\label{il:2-21-1} There exists one point $t ∈ T$ with fibre $X_t :=
    φ^{-1}(t)$ and inclusion $ι_t : X_t → X$ such that the restriction of $σ$ to
    $X_t$ vanishes,
    \[
      dι_t σ = 0 ∈ H⁰\bigl(X_t,\ Ω¹_{X_t} (\log D_t)\bigr).
    \]

  \item\label{il:2-21-2} Locally on $X$, the form $σ$ comes from downstairs: $σ
    ∈ H⁰(X, φ^* Ω¹_T)$.
  \end{enumerate}
\end{prop}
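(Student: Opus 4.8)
The plan is to reformulate both conditions through the exact sequence
\[
  0 → φ^* Ω¹_T → Ω¹_X(\log D) \xrightarrow{q} Ω¹_{X/T}(\log D) → 0
\]
of Reminder~\ref{rem:2-20}, whose kernel term $φ^*Ω¹_T$ is precisely the subsheaf occurring in Item~\ref{il:2-21-2}. Thus \ref{il:2-21-2} is equivalent to the single equation $q(σ) = 0$. The commutative diagram of Reminder~\ref{rem:2-20} identifies the fibrewise restriction $ι_t^*\bigl(q(σ)\bigr)$ with $dι_t σ ∈ H⁰\bigl(X_t, Ω¹_{X_t}(\log D_t)\bigr)$ via the isomorphism $ι_t^* Ω¹_{X/T}(\log D) ≅ Ω¹_{X_t}(\log D_t)$; since the fibres cover $X$ and $Ω¹_{X/T}(\log D)$ is locally free, the section $q(σ)$ vanishes if and only if $dι_t σ = 0$ for every $t ∈ T$. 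With this, \ref{il:2-21-2} $\Rightarrow$ \ref{il:2-21-1} is immediate, and the real content is to promote the vanishing of $dι_t σ$ at the single point of \ref{il:2-21-1} to its vanishing at all points of $T$.

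First I would record the relevant structure on the fibres. As $φ$ is proper, each fibre $X_t$ is compact; as $X$ is Kähler and $X_t ⊆ X$ is a submanifold, $X_t$ is compact Kähler, and $D_t$ is snc by Reminder~\ref{rem:2-20}. On such a pair every holomorphic logarithmic $1$-form is closed, and the resulting map $H⁰\bigl(X_t, Ω¹_{X_t}(\log D_t)\bigr) → H¹(X_t ∖ D_t,\, ℂ)$, $α ↦ [α]$, is injective; both facts are standard consequences of Deligne's mixed Hodge theory (degeneration of the logarithmic Hodge--de Rham spectral sequence). Moreover, since $σ$ is closed it defines a de Rham class $[σ] ∈ H¹(X ∖ D,\, ℂ)$, and functoriality of de Rham cohomology gives $[dι_t σ] = ι_t^*[σ]$. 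Consequently $dι_t σ = 0$ if and only if $ι_t^*[σ] = 0$, and the problem becomes one about cohomology classes.

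The key step is to see that $t ↦ ι_t^*[σ]$ is flat. By Reminder~\ref{remi:2-19} the restriction $φ : X ∖ D \onto T$ is a differentiable fibre bundle, so $R¹ φ_* ℂ$ is a local system on $T$, and $T$ is connected because $X$ is irreducible and $φ$ is surjective. The Leray edge map $H¹(X ∖ D,\, ℂ) → H⁰\bigl(T, R¹ φ_* ℂ\bigr)$ sends the fixed class $[σ]$ to a global section of this local system, and global sections of a local system are exactly the flat, i.e.\ monodromy-invariant, ones; by construction its value at $t$ is $ι_t^*[σ]$. A flat section of a local system over a connected base that vanishes at one point vanishes identically. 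Since the hypothesis of \ref{il:2-21-1} says $ι_t^*[σ] = [dι_t σ] = 0$ at one point, we conclude $ι_t^*[σ] = 0$, hence $dι_t σ = 0$, for every $t ∈ T$. As explained in the first paragraph this forces $q(σ) = 0$, that is, $σ ∈ H⁰(X, φ^* Ω¹_T)$, which is \ref{il:2-21-2}.

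I expect the flatness assertion of the third paragraph to be the main obstacle: the individual cohomology groups $H¹(X_t ∖ D_t,\, ℂ)$ live in different fibres, and it is precisely the differentiable fibre-bundle structure of Reminder~\ref{remi:2-19} that lets one compare them and interpret ``restriction of the fixed global class $[σ]$'' as a parallel section of the Gauss--Manin local system. The second paragraph then rests on the compact Kähler geometry of the fibres---closedness of holomorphic logarithmic forms together with their injection into cohomology---which is where the Kähler hypothesis on $X$ enters essentially.
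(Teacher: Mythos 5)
Your proof is correct; it is the cohomological dual of the paper's own argument, built on the same two pillars. Both proofs use Reminder~\ref{rem:2-20} to reduce assertion~\ref{il:2-21-2} to the vanishing of $d\iota_s \sigma$ on \emph{every} fibre, both use compact Kähler Hodge theory on the fibres, and both use the differentiable fibre-bundle structure of Reminder~\ref{remi:2-19} to propagate vanishing from the single fibre of \ref{il:2-21-1} to all fibres. The difference lies in how the propagation is implemented. The paper works with loops and integrals: by \cite[Thm.~4.5.4]{MR3156076}, the integration functionals coming from $\pi_1(X_s \setminus D_s)$ span $H^0\bigl(X_s,\, \Omega^1_{X_s}(\log D_s)\bigr)^*$, so it suffices to prove $\int_{\gamma_s} d\iota_s\sigma = 0$ for every loop $\gamma_s$; the homotopy exact sequence of the bundle moves $\gamma_s$ to a homotopic loop in the distinguished fibre $X_t$, and homotopy invariance of integrals of the closed form $\sigma$ finishes the proof. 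You work instead with classes and local systems: $[\sigma]$ induces, via the Leray edge map, the flat section $t \mapsto \iota_t^*[\sigma]$ of $R^1\varphi_*\bC$, and a flat section of a local system over a connected base that vanishes at one point vanishes identically. The two mechanisms are equivalent --- monodromy invariance of your section is precisely dual to the paper's transport of loops between fibres. What your route buys is a cleaner global statement with no choice of loops; what it costs is heavier input: in place of the single citation of \cite{MR3156076} you need degeneration of the logarithmic Hodge--de Rham spectral sequence for compact Kähler snc pairs (closedness of logarithmic forms together with injectivity of $H^0\bigl(X_t,\, \Omega^1_{X_t}(\log D_t)\bigr) \to H^1(X_t \setminus D_t,\, \bC)$), which is indeed standard also in the non-projective Kähler setting, plus the Leray spectral sequence.
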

\begin{proof}
  The direction \ref{il:2-21-2} $⇒$ \ref{il:2-21-1} is trivial.  To prove the
  converse, assume that a closed form $σ$ and a point $t ∈ T$ with the
  properties of \ref{il:2-21-1} are given.  We need to show that
  \[
    σ ∈ H⁰(X, φ^* Ω¹_T)
    \overset{\text{Reminder~\ref{rem:2-20}}}{⇔}
    dι_s σ = 0 ∈ H⁰\bigl(X_s,\ Ω¹_{X_s} (\log D_s)\bigr), \text{ for every } s ∈ T.
  \]
  Assume that a point $s ∈ T$ is given.  Since $(X_s, D_s)$ is snc and $X_s$ is
  compact and Kähler, recall that the image of the natural integration map,
  \[
    π_1(X_s ∖ D_s) → H⁰\bigl(X_s,\ Ω¹_{X_s} (\log D_s)\bigr)^*, \quad γ ↦ \left(\textstyle τ ↦ \int_{γ} τ\right)
  \]
  spans the vector space $H⁰\bigl(X_s,\ Ω¹_{X_s} (\log D_s)\bigr)^*$, see
  \cite[Thm.~4.5.4]{MR3156076}.  To prove that $dι_s σ$ vanishes, it will
  therefore suffice to prove that
  \[
    \int_{γ_s} dι_s σ = 0, \quad \text{for every loop $γ_s$ in $X_s ∖ D_s$.}
  \]
  Assume that a loop $γ_s$ in $X_s ∖ D_s$ is given and recall from
  Reminder~\ref{remi:2-19} that $φ : X ∖ D \twoheadrightarrow T$ is a
  differentiable fibre bundle over a path connected base.  The homotopy exact
  sequence therefore implies that the loop $γ_s$ is homotopy equivalent within
  $X ∖ D$ to a loop $γ_t$ in $X_t ∖ D_t$.  We have
  \[
    \int_{γ_s} dι_s σ
    = \int_{ι_s◦γ_s} σ
    \overset{(*)}{=} \int_{ι_t◦γ_t} σ
    = \int_{γ_t} dι_t σ
    \overset{\text{\ref{il:2-21-1}}}{=} \int_{γ_t} 0 = 0
  \]
  where $(*)$ follows from homotopy equivalence of $γ_s$ and $γ_t$, and
  closedness of the holomorphic form $σ$.
\end{proof}

\subsection{Foliations defined by meromorphic maps}
\approvals{Erwan & yes \\ Frédéric & yes \\ Stefan & yes}

The conclusion of Proposition~\ref{prop:2-21} is frequently summarized by saying
that ``the logarithmic form $σ$ annihilates the foliation defined by $φ$''.  To
avoid confusion, we define this terminology explicitly.

\begin{defn}[Foliation defined by meromorphic maps]\label{def:2-22}%
  If $φ : X \dasharrow Y$ is a meromorphic map between complex manifolds, let $ι
  : X° ↪ X$ be the inclusion of the maximal open set where $φ$ is well-defined.
  We refer to
  \begin{equation}\label{eq:2-22-1}%
    ι_* \ker \Bigl( T(φ|_{X°}) : 𝒯_{X°} → (φ|_{X°})^* 𝒯_Y \Bigr) ⊆ ι_* 𝒯_{X°} = 𝒯_X
  \end{equation}
  as the \emph{foliation defined by $φ$}.  Observe that this sheaf is coherent,
  saturated as a subsheaf of $𝒯_X$ and hence reflexive.  It is closed under the
  Lie bracket.
\end{defn}

\begin{notation}[Foliation defined by meromorphic maps]
  Assume the setting of Definition~\ref{def:2-22}.  If no confusion is likely to
  arise, we write $\ker (Tφ) ⊆ 𝒯_X$ for the foliation defined by $φ$, as a
  shorthand for the more cumbersome expression \eqref{eq:2-22-1}.
\end{notation}

\begin{defn}[Logarithmic forms annihilating foliations]\label{def:2-24}%
  If $(X, D)$ is an snc pair, if $ℱ ⊆ 𝒯_X$ is a foliation and $σ ∈ H⁰\bigl(
  X,\, Ω¹_X(\log D) \bigr)$ a logarithmic form, we say that ``$σ$ annihilates
  $ℱ$'' if the natural sheaf morphism
  \[
    σ : ℱ → 𝒪_X(D_{\red})
  \]
  is constantly zero.
\end{defn}

%
%
\svnid{$Id: 03-ggenerated.tex 110 2025-12-11 12:34:48Z kebekus $}
\selectlanguage{british}

\section{Rational maps induced by generically generated sheaves}
\subversionInfo
\approvals{Erwan & yes \\ Frédéric & yes \\ Stefan & yes}
\label{sec:3}

To prepare for the discussion of ``Bogomolov sheaves'' defined by strict wedge
subspaces in Section~\ref{sec:6}, this section considers sheaves $ℰ$, subspaces
$E ⊆ H⁰\bigl( X,\, ℰ \bigr)$ and rational maps coming from the induced linear
systems $Λ^{\rank ℰ} E → H⁰\bigl( X,\, \det ℰ \bigr)$.

\begin{defn}[Sheaves generically generated by spaces of sections]\label{def:3-1}%
  Let $X$ be a compact and normal analytic variety, and let $ℰ$ be a coherent
  sheaf of $𝒪_X$-modules.  Assume that $ℰ$ is not a torsion sheaf, and let $E ⊆
  H⁰\bigl( X,\, ℰ \bigr)$ be a linear subspace.  Call $ℰ$ \emph{generically
  generated by sections in $E$} if the following two equivalent conditions hold.
  \begin{enumerate}
  \item The natural map $E ⊗ 𝒪_X → ℰ$ is generically surjective.
  
  \item The natural map $λ_E : Λ^{\rank ℰ} E → H⁰\bigl( X,\, \det ℰ \bigr)$ is
    non-trivial.
  \end{enumerate}
\end{defn}

The following construction is the basis for all that follows in this section.

\begin{construction}[Projection to linear systems induced by spaces of sections]\label{cons:3-2}%
  In the setup of Definition~\ref{def:3-1}, assume that $ℰ$ is generically
  generated by sections in $E$.  Observe that $\det ℰ$ is reflexive with a
  non-trivial space of sections.  Following the notation of
  Section~\ref{sec:2-3}, denote the associated rational map by
  \[
    φ_{\det ℰ} : X \dasharrow ℙ \Bigl( H⁰\bigl( X,\, \det ℰ \bigr) \Bigr).
  \]
  As remarked in \ref{rem:2-11}, the inclusion $\img λ_E ⊆ H⁰\bigl( X,\, \det ℰ
  \bigr)$ of linear spaces induces a rational projection map between the
  projectivizations,
  \[
    ℙ \Bigl( H⁰\bigl( X,\, \det ℰ \bigr) \Bigr) \dashrightarrow ℙ \Bigl( \img λ_E \Bigr).
  \]
  Given that elements of $\img λ_E$ do not vanish at general points of $X$, the
  image of $φ_{\det ℰ}$ is not contained in the indeterminacy of the projection,
  and we obtain a composed rational map $η_E := φ_{\img λ_E, \det ℰ}$ as
  follows,
  \[
    \begin{tikzcd}[column sep=2cm]
      X \arrow[r, dashed, "φ_{\det ℰ}"'] \arrow[rr, dashed, "η_E", bend left=15] & ℙ \Bigl( H⁰\bigl( X,\, \det ℰ \bigr) \Bigr)
      \arrow[r, dashed, two heads, "\text{projection}"'] & ℙ \Bigl( \img λ_E \Bigr).
    \end{tikzcd}
  \]
\end{construction}

\begin{notation}[Projection to linear systems induced by spaces of sections]
  We will use the notation $η_E$ of Construction~\ref{cons:3-2} throughout the
  present paper, whenever we discuss sheaves generically generated by spaces of
  sections.
\end{notation}

\begin{rem}[Sheaves of differentials generically generated by spaces of sections]\label{rem:3-4}%
  Consider the setup of Construction~\ref{cons:3-2} in case $X$ is a Kähler
  manifold, $D$ an snc divisor on $X$ and $ℰ ⊆ Ω¹_X(\log D)$ a sheaf of
  logarithmic differentials.  Then, there exists a sequence of inequalities,
  \[
     \dim \img η_E ≤ \dim \img φ_{\det ℰ} ≤ κ(\det ℰ) ≤ \rank ℰ,
  \]
  where the last inequality is given by the classic vanishing theorem of
  Bogomolov-Sommese\footnote{See \cite[Cor.~6.9]{EV92} for the projective case
  and \cite[Sect.~6]{MR4897417} for a discussion of the vanishing theorem in the
  non-projective Kähler setting}.  Recall that $\det ℰ$ is called a
  \emph{Bogomolov sheaf} if the equality $κ(\det ℰ) = \rank ℰ$ holds.
\end{rem}

\subsection{Functoriality}
\approvals{Erwan & yes \\ Frédéric & yes \\ Stefan & yes}

The following proposition asserts that projection to linear systems induced by
spaces of sections is functorial in inclusions of sheaves generically generated
by spaces of sections.  The proof is tedious, but elementary and certainly not
surprising.  We include full details for completeness' sake.

\begin{prop}[Functoriality of $η_•$ in inclusions]\label{prop:3-5}%
  Let $X$ be a compact and normal analytic variety and let $ℱ ⊆ ℰ$ be an
  inclusion of torsion free, coherent sheaves of $𝒪_X$-modules.  Assume that
  $ℱ$ and $ℰ$ are generically generated by sections in $F ⊆ H⁰\bigl( X,\, ℱ
  \bigr)$ and $E ⊆ H⁰\bigl( X,\, ℰ \bigr)$, respectively.  If $F ⊆ E$, then
  there exists a commutative diagram of composable rational maps,
  \begin{equation}\label{eq:3-5-1}
    \begin{tikzcd}[column sep=2cm]
      X \ar[r, dashed, "η_E"'] \ar[rr, bend left=15, dashed, "η_F"] & ℙ \Bigl( \img λ_E \Bigr) \ar[r, dashed, two heads, "∃ η_{E,F}"'] & ℙ \Bigl( \img λ_F \Bigr),
    \end{tikzcd}
  \end{equation}
  where $η_{E,F}$ is a linear projection.
\end{prop}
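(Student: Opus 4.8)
The plan is to reduce the statement to the construction of a single linear injection $j : \img \lambda_F \hookrightarrow \img \lambda_E$ that is compatible with evaluation at general points of $X$. Once such a $j$ is available, Remark~\ref{rem:2-11} produces the desired linear projection $\eta_{E,F}$, and the commutativity of \eqref{eq:3-5-1} is then checked by comparing both composites at a general point $x \in X$. There, by Lemma~\ref{lem:2-12}, the values $\eta_E(x)$ and $\eta_F(x)$ are the hyperplanes of sections in $\img \lambda_E$ and $\img \lambda_F$, respectively, that vanish at $x$; evaluation-compatibility of $j$ gives $\eta_E(x) \cap j(\img \lambda_F) = j(\eta_F(x))$, which is exactly the identity $\eta_{E,F} \circ \eta_E = \eta_F$. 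Since two meromorphic maps that agree on a dense open subset coincide, this suffices.

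To build $j$, set $r := \rank \sF$ and $s := \rank \sE$ and pass to the big open subset $U \subseteq X$ over which $X$ is smooth and $\sF$, $\sE$ and $\sE/\sF$ are all locally free, so that $\sF|_U \hookrightarrow \sE|_U$ is fibrewise injective. Using that $\sF$ is generically generated by $F$ and $\sE$ by $E$, I would choose sections $e_{r+1}, \dots, e_s \in E$ whose values at a general point $x$ complete a basis of $\sF_x$ to a basis of $\sE_x$: concretely, pick $f_1, \dots, f_r \in F$ with $\lambda_F(f_1 \wedge \cdots \wedge f_r) \neq 0$ and then $e_j \in E$ so that $\lambda_E(f_1 \wedge \cdots \wedge f_r \wedge e_{r+1} \wedge \cdots \wedge e_s) \neq 0$, which is possible because evaluations of $E$ span $\sE$ at general points. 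Wedging with $e_{r+1} \wedge \cdots \wedge e_s$ defines a morphism $\Lambda^r \sF \to \Lambda^s \sE$ on $U$, which extends uniquely to a sheaf morphism $\psi : \det \sF \to \det \sE$ by reflexivity. By the choice of the $e_j$, the morphism $\psi$ is injective on fibres at general points, hence injective, and it carries $\img \lambda_F$ into $\img \lambda_E$ because $f_1 \wedge \cdots \wedge f_r \wedge e_{r+1} \wedge \cdots \wedge e_s$ lies in $\Lambda^s E$. I set $j := \psi|_{\img \lambda_F}$.

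Evaluation-compatibility is then immediate: for $t \in \img \lambda_F$ and general $x$ one has $j(t)(x) = t(x) \wedge e_{r+1}(x) \wedge \cdots \wedge e_s(x)$ in $(\det \sE)_x$, and since $e_{r+1}(x), \dots, e_s(x)$ complete $\sF_x$ to a basis of $\sE_x$, this vanishes if and only if $t(x) = 0$ in $(\det \sF)_x$. In particular $j$ is injective on global sections, the space $\img \lambda_F$ is not contained in the hyperplane $\eta_E(x)$ for general $x$ (so the image of $\eta_E$ meets the locus where $\eta_{E,F}$ is defined), and the identity $\eta_E(x) \cap j(\img \lambda_F) = j(\eta_F(x))$ holds at every general point, completing the argument.

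I expect the only real obstacle to be the case $r < s$: when the ranks differ there is no canonical morphism $\det \sF \to \det \sE$, and one must introduce the auxiliary sections $e_{r+1}, \dots, e_s$ and verify the genericity claim that a \emph{single} such choice completes $\sF_x$ to a basis of $\sE_x$ for all $x$ in a dense open set. The remaining points — that morphisms of reflexive sheaves defined on a big open extend, and that identities verified at general points propagate to equalities of reflexive sheaves and of meromorphic maps — are routine but should be stated explicitly. When $r = s$ the construction collapses to taking $\psi = \det(\sF \hookrightarrow \sE)$, and no auxiliary choices are needed.
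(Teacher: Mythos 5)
Your proposal is correct and follows essentially the same route as the paper's own proof: the paper likewise chooses auxiliary sections $\tau_1, \dots, \tau_a \in E$ whose values at a general point induce a basis of $\mathcal{E}_x/\mathcal{F}_x$, wedges with them to obtain a linear injection $\iota_\tau : H^0\bigl(X, \det\mathcal{F}\bigr) \hookrightarrow H^0\bigl(X, \det\mathcal{E}\bigr)$ restricting to $\img \lambda_F \hookrightarrow \img \lambda_E$, and verifies commutativity at a general point via the equivalence $\sigma(x) = 0 \Leftrightarrow (\sigma \wedge \tau_1 \wedge \cdots \wedge \tau_a)(x) = 0$, which is exactly your evaluation-compatibility of $j$. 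The only cosmetic differences are that the paper phrases the injection on the full section space of $\det\mathcal{F}$ (factoring through $\phi_{\det\mathcal{F}}$ in a larger commutative diagram) rather than directly on $\img\lambda_F$, and works at the level of global sections rather than via a reflexive sheaf morphism $\psi$.
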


\begin{notation}[Functoriality of $η_•$ in inclusions]
  We will use the notation $η_{E,F}$ of Proposition~\ref{prop:3-5} throughout
  the paper, whenever we discuss inclusions of sheaves generically generated by
  spaces of sections.
\end{notation}

\begin{proof}[Proof of Proposition~\ref{prop:3-5}]\CounterStep%
  Choose a general point $x ∈ X$ and let $τ_1, …, τ_a ∈ E$ be a sequence of
  sections such that the classes of $τ_1(x), …, τ_a(x)$ form a basis of the
  quotient space $ℰ_x/ℱ_x$.  We obtain a linear injection
  \[
    ι_τ : H⁰\bigl( X,\, \det ℱ \bigr) ↪ H⁰\bigl( X,\, \det ℰ \bigr), \quad σ ↦ σ Λ τ_1 Λ ⋯ Λ τ_a.
  \]
  Observing that the injection $ι_τ$ restricts to a linear injection between the
  images of the $λ$-operators, we obtain a commutative diagram of linear
  injections,
  \[
    \begin{tikzcd}
      \img λ_E \ar[r, hook] & H⁰\bigl( X,\, \det ℰ \bigr) \\
      \img λ_F \ar[r, hook] \ar[u, hook, "ι_τ|_{\img λ_F}"] & H⁰\bigl( X,\, \det ℱ \bigr), \ar[u, hook, "ι_τ"']
    \end{tikzcd}
  \]
  and hence a diagram of composable rational maps between the projectivizations,
  \begin{equation}\label{eq:3-7-1}
    \begin{tikzcd}[column sep=2cm, row sep=1cm]
      X \ar[r, dashed, "φ_{\det ℰ}"'] \ar[d, equal] \ar[rr, dashed, "η_E", bend left=15] & ℙ \Bigl( H⁰\bigl( X,\, \det ℰ \bigr) \Bigr) \ar[r, dashed, two heads, "\text{projection}"'] \ar[d, dashed, two heads, "ℙ(ι_τ)"'] & ℙ \Bigl( \img λ_E \Bigr) \ar[d, dashed, two heads, "ℙ(ι_τ|_{\img λ_F})"] \\
      X \ar[r, dashed, "φ_{\det ℱ}"] \ar[rr, dashed, "η_F"', bend right=15] & ℙ \Bigl( H⁰\bigl( X,\, \det ℱ \bigr) \Bigr) \ar[r, dashed, two heads, "\text{projection}"] & ℙ \Bigl( \img λ_F \Bigr),
    \end{tikzcd}
  \end{equation}
  whose right square commutes by construction.  We are done once we prove that
  the left square of \eqref{eq:3-7-1} also commutes.  To this end, let $x ∈ X$
  be a general point.  Identifying points of projective spaces with
  codimension-one subspaces of the underlying vector spaces, we have
  \begin{align*}
    φ_{\det ℰ}(x) & = \bigl\{ μ ∈ H⁰\bigl( X,\, \det ℰ \bigr) \::\: μ(x) = 0 \bigr\} \\
    φ_{\det ℱ}(x) & = \bigl\{ σ ∈ H⁰\bigl( X,\, \det ℱ \bigr) \::\: σ(x) = 0 \bigr\} \\
    \intertext{and then}
    ℙ(ι_τ)\bigl(φ_{\det ℰ}(x)\bigr) & = ι_{τ}^{-1}\bigl\{ μ ∈ H⁰\bigl( X,\, \det ℰ \bigr) \::\: μ(x) = 0 \bigr\} \\
    & = \bigl\{ σ ∈ H⁰\bigl( X,\, \det ℱ \bigr) \::\: (σ Λ τ_1 Λ ⋯ Λ τ_a) (x) = 0 \bigr\}.
  \end{align*}
  But since $x$ is general and since forming a basis is a Zariski open property,
  the choice of $τ_•$ guarantees that
  \[
    σ(x) = 0 ⇔ (σ Λ τ_1 Λ ⋯ Λ τ_a) (x) = 0,
    \quad \text{for every } σ ∈ H⁰\bigl( X,\, \det ℱ \bigr),
  \]
  which is the desired commutativity statement.
\end{proof}

For later use in Section~\ref{sec:6}, we remark that Proposition~\ref{prop:3-5}
applies in a setting where the larger sheaf is obtained as a finite sum of
subsheaves.

\begin{cor}[Sums of sheaves generically generated by spaces of sections]\label{cor:3-8}%
  Let $X$ be a projective manifold and let $ℱ_1, …, ℱ_a ⊆ ℰ$ be inclusions of
  torsion free, coherent sheaves of $𝒪_X$-modules.  Assume that the $ℱ_•$ are
  generically generated by sections in $F_• ⊆ H⁰\bigl( X,\, ℱ_• \bigr)$ and set
  \[
    ℱ := \sum_i ℱ_i ⊆ ℰ
    \qquad
    F := \sum_i F_i ⊆ H⁰\bigl( X,\, ℰ \bigr).
  \]
  Then, $ℱ$ is generically generated by sections in $F$ and there exists a
  commutative diagram of composable rational maps,
  \begin{equation}\label{eq:3-8-1}
    \begin{tikzcd}[column sep=2cm]
      X \ar[r, dashed, "η_F"'] \ar[rr, bend left=15, dashed, "η_{F_1} ⨯ ⋯ ⨯ η_{F_a}"] & ℙ \Bigl( \img λ_F \Bigr) \ar[r, dashed, "∃ η"'] & ℙ \Bigl( \img λ_{F_1} \Bigr) ⨯ ⋯ ⨯ ℙ \Bigl( \img λ_{F_a} \Bigr).
    \end{tikzcd}
  \end{equation}
  In particular, we have
  \begin{equation}\label{eq:3-8-2}
    \dim \img η_F ≥ \dim \img η_{F_1} ⨯ ⋯ ⨯ η_{F_a}.
  \end{equation}
\end{cor}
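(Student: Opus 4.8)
The plan is to reduce the statement to the two-sheaf functoriality of Proposition~\ref{prop:3-5}, applied separately to each inclusion $ℱ_i ⊆ ℱ$, and then to assemble the resulting linear projections into a single product map.

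First I would check that $ℱ = \sum_i ℱ_i$ is generically generated by $F = \sum_i F_i$, so that the rational map $η_F$ of Construction~\ref{cons:3-2} is defined in the first place. Since the sum of subsheaves is formed stalkwise, one has $ℱ_x = \sum_i (ℱ_i)_x$ at every point. At a general point $x$ the finitely many evaluation maps $F_i ⊗ 𝒪_X → ℱ_i$ are simultaneously surjective near $x$ (each is generically surjective by hypothesis, and a finite intersection of dense opens is dense), hence $F ⊗ 𝒪_X → ℱ$ is surjective near $x$ as well. By Definition~\ref{def:3-1} this is exactly generic generation of $ℱ$ by $F$. I also note that $ℱ$ and each $ℱ_i$ are torsion free, being subsheaves of the torsion free sheaf $ℰ$, so that Proposition~\ref{prop:3-5} does apply.

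Next, for each index $i$ I would feed the pair of inclusions $ℱ_i ⊆ ℱ$ and $F_i ⊆ F$ into Proposition~\ref{prop:3-5}. This produces a linear projection $η_{F, F_i} : ℙ(\img λ_F) \dashrightarrow ℙ(\img λ_{F_i})$ together with the factorisation $η_{F_i} = η_{F, F_i} ◦ η_F$. Setting $η := η_{F, F_1} ⨯ ⋯ ⨯ η_{F, F_a}$ gives the map in \eqref{eq:3-8-1}, and composing coordinatewise yields $η ◦ η_F = η_{F_1} ⨯ ⋯ ⨯ η_{F_a}$, which is the asserted commutativity. Because each $η_{F_i}$ is already a well-defined rational map on $X$, there is no indeterminacy issue in forming this composition.

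Finally, the inequality \eqref{eq:3-8-2} is formal: the image of $η_{F_1} ⨯ ⋯ ⨯ η_{F_a} = η ◦ η_F$ is the closure of $η(\img η_F)$, and a rational map cannot increase dimension, so $\dim \img(η_{F_1} ⨯ ⋯ ⨯ η_{F_a}) ≤ \dim \img η_F$. I expect the only point needing genuine care to be the generic-generation argument of the first step, i.e.\ confirming that generic surjectivity passes to the sum; the factorisation and the dimension estimate are then formal consequences of Proposition~\ref{prop:3-5} and of the behaviour of images under composition of rational maps.
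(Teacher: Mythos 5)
Your proposal is correct and follows essentially the same route as the paper's own proof: apply Proposition~\ref{prop:3-5} to each inclusion $\sF_i \subseteq \sF$, set $\eta := \eta_{F,F_1} \times \cdots \times \eta_{F,F_a}$, and deduce Inequality~\eqref{eq:3-8-2} from the commutativity of Diagram~\eqref{eq:3-8-1}, since the image of $\eta_F$ dominates the image of $\eta_{F_1} \times \cdots \times \eta_{F_a}$. The only difference is that you spell out the verification that $\sF$ is generically generated by sections in $F$ (via simultaneous generic surjectivity of the evaluation maps), a step the paper dismisses as clear.
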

\begin{proof}
  The assertion that $ℱ$ is generically generated by sections in $F$ is clear.
  Apply Proposition~\ref{prop:3-5} to the subsheaves $ℱ_• ⊂ ℱ$ and take $η :=
  η_{F, F_1} ⨯ ⋯ ⨯ η_{F, F_a}$.  Inequality~\eqref{eq:3-8-2} follows from
  commutativity of \eqref{eq:3-8-1}, which guarantees that the image of $η_F$
  dominates the image of $η_{F_1} ⨯ ⋯ ⨯ η_{F_a}$.
\end{proof}

%
%
\svnid{$Id: 04-extension.tex 130 2026-01-07 16:08:21Z kebekus $}
\selectlanguage{british}

\section{Extension of adapted reflexive differentials, Proof of Theorem~\ref*{thm:1-6}}
\label{sec:4}
\subversionInfo
\approvals{Erwan & yes \\ Frédéric & yes \\ Stefan & yes}

Theorem~\ref{thm:1-6} asserts that reflexive 1-forms on dlt $\cC$-pairs extend to
log resolutions of singularities.  Sections~\ref{sec:4-1} and \ref{sec:4-2} make
this statement precise and compare the notion to the ``pull-back'' discussed in
\cite[Sect.~5]{orbiAlb1}.  The subsequent Section~\ref{sec:4-3} provides
elementary extendability criteria, which are then applied in
Section~\ref{sec:4-4} to prove Theorem~\ref{thm:1-6}.

\subsection{Definition}
\label{sec:4-1}
\approvals{Erwan & yes \\ Frédéric & yes \\ Stefan & yes}

The extension problem for adapted reflexive differentials considers a cover
$\what{X}$ of a $\cC$-pair, a resolution $\wtilde{X}$ of the singularities and
asks if every adapted reflexive differential comes from a logarithmic
differential on $\wtilde{X}$.  We consider the following setting throughout the
present section.

\begin{setting}[Extension of adapted reflexive differentials]\label{setting:4-1}%
  Given a $\cC$-pair $(X, D)$, consider sequences of morphisms of the following
  form,
  \begin{equation}\label{eq:4-1-1}
    \begin{tikzcd}[column sep=4cm]
      \wtilde{X} \ar[r, two heads, "π\text{, log resolution of } \bigl(\what{X}\protect{,}\: γ^* ⌊D⌋\bigr)"] & \what{X} \ar[r, "γ\text{, $q$-morphism}"] & X.
    \end{tikzcd}
  \end{equation}
 Write
  \[
    \what{X}° := \what{X}_{\reg} ∩ γ^{-1} (X_{\reg} ∖ \supp D)
  \]
  and observe that $\what{X}°$ is a non-empty, Zariski open subset of
  $\what{X}$.
\end{setting}

\begin{rem}[Sheaves of reflexive log differentials]\label{rem:4-2}%
  Maintain Setting~\ref{setting:4-1}.  If $p ∈ ℕ$ is any number, we consider the
  natural sheaves of reflexive log differentials,
  \begin{align}
    \label{eq:4-2-1} Ω^{[p]}_{(X,D,γ)} & ⊆ Ω^{[p]}_{\what{X}} \bigl(\log γ^* ⌊D⌋ \bigr) \\
    \label{eq:4-2-2} π_* Ω^p_{\wtilde{X}}\bigl(\log π^*γ^* ⌊D⌋ \bigr) & ⊆ Ω^{[p]}_{\what{X}} \bigl(\log γ^* ⌊D⌋\bigr).
  \end{align}
  We refer the reader to \cite[Sect.~4.2]{orbiAlb1} where
  Inclusion~\eqref{eq:4-2-1} is discussed at length.  Inclusion~\eqref{eq:4-2-2}
  exists because the sheaf on the left is torsion free and agrees with the
  reflexive sheaf on the right on the big open set where the resolution map $π$
  is isomorphic.
\end{rem}

\begin{defn}[Extension of adapted reflexive differentials]\label{def:4-3}%
  Let $(X, D)$ be a $\cC$-pair and let $p ∈ ℕ$ be any number.  Say that
  \emph{adapted reflexive $p$-forms on $(X,D)$ extend to log resolutions of
  singularities} if for every sequence of the form \eqref{eq:4-1-1}, we have
  \begin{equation}\label{eq:4-3-1}
    Ω^{[p]}_{(X,D,γ)} ⊆ π_* Ω^p_{\wtilde{X}} \bigl(\log π^*γ^* ⌊D⌋\bigr).
  \end{equation}
\end{defn}

\begin{rem}[Sheaves in Definition~\ref{def:4-3}]
  Recall from Remark~\ref{rem:4-2} that the sheaves on the left and right are
  subsheaves of $Ω^{[p]}_{\what{X}}(\log γ^* ⌊D⌋)$.  Inclusion~\eqref{eq:4-3-1}
  is therefore meaningful.
\end{rem}

The word ``extension'' in Definition~\ref{def:4-3} is justified by the following
remark.

\begin{rem}[Extension of globally defined adapted reflexive differentials]
  Assume that Inclusion~\eqref{eq:4-3-1} of Definition~\ref{def:4-3} holds.  If
  \begin{align*}
    \what{σ} & ∈ H⁰\bigl( \what{X},\, Ω^{[p]}_{(X,D,γ)}\bigr) ⊆ H⁰\bigl( \what{X},\, Ω^{[p]}_{\what{X}}(\log γ^* ⌊D⌋)\bigr), \\
    \intertext{is any adapted reflexive $p$-form, then there exists a logarithmic differential}
    \wtilde{σ} & ∈ H⁰\bigl( \wtilde{X},\, Ω^p_{\wtilde{X}}(\log π^*γ^* ⌊D⌋)\bigr)
  \end{align*}
  that agrees over $(\what{X}, γ^* ⌊D⌋)_{\snc}$ with the pull-back of
  $\what{σ}$.
\end{rem}

\begin{prop}[Independence of the resolution]
  Let $(X, D)$ be a $\cC$-pair and let $p ∈ ℕ$ be any number.  Then, the
  following statements are equivalent.
  \begin{enumerate}
  \item Inclusion~\eqref{eq:4-3-1} holds for every sequence of the form
    \eqref{eq:4-1-1},

  \item Inclusion~\eqref{eq:4-3-1} holds for one sequence of the form
    \eqref{eq:4-1-1}.
  \end{enumerate}
\end{prop}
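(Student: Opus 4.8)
The plan centers on a single structural observation. The only implication that requires an argument is that validity of \eqref{eq:4-3-1} for one sequence of the form \eqref{eq:4-1-1} forces it for all of them; the converse is immediate, since such sequences exist at all (take $\gamma = \Id$ and any log resolution $\pi$, which is available in the analytic category). The key point is that, for a \emph{fixed} $q$-morphism $\gamma : \what X → X$, the coherent sheaf
\[
  \sR_\gamma := \pi_* \Omega^p_{\wtilde X}\bigl(\log \pi^*\gamma^*\lfloor D\rfloor\bigr) ⊆ \Omega^{[p]}_{\what X}\bigl(\log \gamma^*\lfloor D\rfloor\bigr)
\]
does not depend on the chosen log resolution $\pi$. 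Since the subsheaf $\Omega^{[p]}_{(X,D,\gamma)}$ on the left of \eqref{eq:4-3-1} depends on $\gamma$ alone, this resolution-independence of $\sR_\gamma$ immediately makes the validity of \eqref{eq:4-3-1} independent of $\pi$ for each fixed $\gamma$.

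I would establish the resolution-independence of $\sR_\gamma$ by the usual domination argument. Given two log resolutions $\pi_1 : \wtilde X_1 → \what X$ and $\pi_2 : \wtilde X_2 → \what X$, choose a common log resolution $\pi_3 : \wtilde X_3 → \what X$ dominating both through birational morphisms $\mu_i : \wtilde X_3 → \wtilde X_i$. Writing $E_i ⊆ \wtilde X_i$ for the reduced preimage of $\supp \gamma^*\lfloor D\rfloor$, each $(\wtilde X_i, E_i)$ is snc and $\mu_i^{-1}(E_i)_{\red} = E_3$. The decisive input is the equality
\[
  \mu_{i,*}\,\Omega^p_{\wtilde X_3}(\log E_3) = \Omega^p_{\wtilde X_i}(\log E_i),
\]
valid for a proper birational morphism of complex manifolds with these compatible snc boundaries; pushing it forward under $\pi_i$ gives $\pi_{3,*}\Omega^p_{\wtilde X_3}(\log E_3) = \pi_{i,*}\Omega^p_{\wtilde X_i}(\log E_i)$, so $\sR_\gamma$ computed from $\pi_1$, $\pi_2$ and $\pi_3$ all coincide. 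This displayed equality is the \textbf{main obstacle}: it expresses that log $p$-forms extend across the exceptional locus of $\mu_i$, poles arising only along components of $E_3$, which lie over $E_i$, whereas over the smooth, boundary-free locus $\wtilde X_i ∖ E_i$ the morphism $\mu_i$ is a resolution of a smooth space and holomorphic forms extend over its (codimension-one) exceptional set. The crucial subtlety to verify, by the standard local normal form for logarithmic differentials, is that exceptional divisors lying over $\wtilde X_i ∖ E_i$ acquire \emph{no} logarithmic pole, so that both inclusions in the sheaf equality hold.

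It remains to remove the dependence on the cover $\gamma$. Given a second sequence with $q$-morphism $\gamma' : \what X' → X$, I would pass to a common refinement: let $\what X''$ be a component of the normalization of $\what X ⨯_X \what X'$, with $q$-morphisms $\delta : \what X'' → \what X$ and $\delta' : \what X'' → \what X'$ and compatibly chosen log resolutions. The functoriality of adapted reflexive differentials under covers recorded in \cite[Sect.~5]{orbiAlb1} provides the pull-back maps on the sheaves $\Omega^{[p]}_{(X,D,-)}$, and combining these with the resolution-independence established above (applied over $\what X$, $\what X'$ and $\what X''$) transfers the validity of \eqref{eq:4-3-1} between each cover and the common refinement, in both directions. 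Chaining these equivalences shows that \eqref{eq:4-3-1} holds for $(\gamma',\pi')$ as soon as it holds for the single sequence supplied by the hypothesis, which completes the argument; the only care needed here is to match boundary divisors and resolutions across $\delta$ so that the pull-back maps of \cite{orbiAlb1} are compatible with the log-form extension of the previous paragraph.
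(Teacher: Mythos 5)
Your first half is precisely the paper's proof. The paper disposes of the proposition in one line: for a \emph{fixed} $q$-morphism $\gamma$, any two log resolutions of $\bigl(\what{X}, \gamma^*\lfloor D\rfloor\bigr)$ are dominated by a common third, so that the subsheaf $\pi_*\Omega^p_{\wtilde{X}}\bigl(\log \pi^*\gamma^*\lfloor D\rfloor\bigr) \subseteq \Omega^{[p]}_{\what{X}}\bigl(\log \gamma^*\lfloor D\rfloor\bigr)$ does not depend on $\pi$; your verification of the equality $\mu_{i,*}\Omega^p_{\wtilde{X}_3}(\log E_3) = \Omega^p_{\wtilde{X}_i}(\log E_i)$ fills in exactly the detail the paper leaves implicit, and it is correct. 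As the title ``Independence of the resolution'' indicates, this per-$\gamma$ statement is all the paper asserts and proves: the quantifiers are meant to range over the choice of log resolution $\pi$, with $\gamma$ held fixed.

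Your second half, which lets $\gamma$ vary as well, contains a genuine gap, and the statement it aims at is in fact false. The unproved step is the claim that validity of \eqref{eq:4-3-1} ``transfers in both directions'' between a cover and a common refinement. Pull-back along $\delta : \what{X}'' \to \what{X}$ gives an inclusion $\delta^{[*]}\Omega^{[p]}_{(X,D,\gamma)} \into \Omega^{[p]}_{(X,D,\gamma\circ\delta)}$ which is in general \emph{strict} --- producing adapted forms that are not pull-backs is the very purpose of passing to finer covers --- so extendability of pulled-back forms says nothing about the larger sheaf $\Omega^{[p]}_{(X,D,\gamma\circ\delta)}$; and descending from $\what{X}''$ to $\what{X}$ is exactly the content of Proposition~\ref{prop:4-12}, whose proof needs $\delta$ to be Galois, equivariant resolutions and $G$-invariant push-forward, none of which a component of a normalized fibre product provides. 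That no repair is possible is shown by an example. Setting~\ref{setting:4-1} allows $\gamma$ to be \emph{any} $q$-morphism, in particular the open immersion $\gamma_0 : (X,D)_{\reg} \into X$; taking $\pi_0 = \Id$, Inclusion~\eqref{eq:4-3-1} for this sequence reduces to \eqref{eq:4-2-1} and therefore holds for every $\cC$-pair and every $p$. So if ``one sequence'' implied ``all sequences'', every $\cC$-pair would have the extension property --- contradicted, for instance, by $X$ the affine cone over a smooth plane cubic, $D = 0$, $p = 2$, $\gamma = \Id_X$ and $\pi$ the blow-up of the vertex: there $\Omega^{[2]}_{(X,0,\Id)} = \Omega^{[2]}_X = \omega_X \cong \sO_X$, while the exceptional elliptic curve has discrepancy $-1$, so $\pi_*\Omega^2_{\wtilde{X}} = \pi_*\omega_{\wtilde{X}} \subsetneq \omega_X$ and \eqref{eq:4-3-1} fails. (This is also why Theorem~\ref{thm:1-6} and Propositions~\ref{prop:4-12}--\ref{thm:4-13} are not vacuous.) In short: your first half is the paper's argument and should stand; the second half should be deleted, with the proposition read as the paper intends it, namely resolution-independence for each fixed $\gamma$.
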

\begin{proof}
  Given that any two log resolutions are dominated by a common third, the
  subsheaf $π_* Ω^p_{\wtilde{X}}\bigl(\log π^*γ^* ⌊D⌋ \bigr) ⊆
  Ω^{[p]}_{\what{X}} \bigl(\log γ^* ⌊D⌋\bigr)$ does not depend on $π$.
\end{proof}

\subsection{Relation to the literature}
\label{sec:4-2}
\approvals{Erwan & yes \\ Frédéric & yes \\ Stefan & yes}

For locally uniformizable pairs\footnote{Locally uniformizable pairs are
$\cC$-pairs with particularly simple singularities, akin to quotients of pairs
with reduced, normal crossing boundary divisor.  We refer the reader to
\cite[Sect.~2.5.1]{orbiAlb1} for the definition and a detailed discussion.},
Section~5 of the reference paper \cite{orbiAlb1} discusses ``pull-back'', a
concept closely related to the ``extension'' of Definition~\ref{def:4-3}.  The
definition of ``pull-back'' replaces sequences of the form \eqref{eq:4-1-1} by
sequences where $π$ is an arbitrary morphism from a smooth space, and replaces
Inclusion~\eqref{eq:4-3-1} by the existence of a pull-back morphism ``$d_{\cC}
π$'' that generalizes the standard pull-back of Kähler differentials.  The
following proposition relates these notions for later reference.

\begin{prop}[Extension as a pull-back property]\label{prop:4-7}%
  Let $(X, D)$ be a $\cC$-pair and let $p ∈ ℕ$ be any number.  Adapted reflexive
  $p$-forms on $(X,D)$ extend to log resolutions of singularities if and only if
  for every sequence of the form \eqref{eq:4-1-1}, there exists an injective
  sheaf morphism
  \[
    d_{\cC} π : π^{[*]} Ω^{[p]}_{(X,D,γ)} ↪ Ω^p_{\wtilde{X}}(\log π^*γ^* ⌊D⌋)
  \]
  that agrees on the Zariski open set $π^{-1} (\what{X}°)$ with the standard
  pull-back of Kähler differentials.
\end{prop}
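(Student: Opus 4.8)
The plan is to deduce the equivalence from the adjunction between $\pi^*$ and $\pi_*$, together with two elementary observations. First, over the dense open set $\what{X}^{\circ}$ of Setting~\ref{setting:4-1} the boundary $\gamma^*\lfloor D\rfloor$ is empty and $\what{X}$ is smooth, so $\Omega^{[p]}_{(X,D,\gamma)}$ restricts to the ordinary sheaf $\Omega^p_{\what{X}^{\circ}}$; moreover $\pi$ is an isomorphism over $\what{X}^{\circ}$, since a log resolution is an isomorphism over the snc locus. Hence the standard pull-back of Kähler differentials identifies $\pi^*\Omega^p_{\what{X}^{\circ}}$ with $\Omega^p_{\pi^{-1}(\what{X}^{\circ})}$, and its adjoint with the tautological inclusion. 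Second, all sheaves that receive or contain the morphisms in question — namely $\Omega^{[p]}_{\what{X}}(\log\gamma^*\lfloor D\rfloor)$, the reflexive hull $\pi^{[*]}\Omega^{[p]}_{(X,D,\gamma)}$, and the locally free $\Omega^p_{\wtilde{X}}(\log\pi^*\gamma^*\lfloor D\rfloor)$ — are torsion free; consequently two morphisms into such a sheaf that agree on a dense open subset agree everywhere, and a morphism whose restriction to a dense open is injective is injective. Throughout I would use Remark~\ref{rem:4-2}, which realizes both $\Omega^{[p]}_{(X,D,\gamma)}$ and $\pi_*\Omega^p_{\wtilde{X}}(\log\pi^*\gamma^*\lfloor D\rfloor)$ as subsheaves of $\Omega^{[p]}_{\what{X}}(\log\gamma^*\lfloor D\rfloor)$.

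For the ``only if'' direction I would assume Inclusion~\eqref{eq:4-3-1} and apply adjunction to turn the inclusion $\Omega^{[p]}_{(X,D,\gamma)}\hookrightarrow\pi_*\Omega^p_{\wtilde{X}}(\log\pi^*\gamma^*\lfloor D\rfloor)$ into a morphism $\pi^*\Omega^{[p]}_{(X,D,\gamma)}\to\Omega^p_{\wtilde{X}}(\log\pi^*\gamma^*\lfloor D\rfloor)$. Since the target is locally free, hence reflexive, and since $\pi^*\Omega^{[p]}_{(X,D,\gamma)}\to\pi^{[*]}\Omega^{[p]}_{(X,D,\gamma)}$ is an isomorphism on the big open subset of $\wtilde{X}$ where the pull-back is locally free, this morphism would factor uniquely through the reflexive hull, defining $d_{\cC}\pi:\pi^{[*]}\Omega^{[p]}_{(X,D,\gamma)}\to\Omega^p_{\wtilde{X}}(\log\pi^*\gamma^*\lfloor D\rfloor)$. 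By naturality of the adjunction together with the first observation, $d_{\cC}\pi$ restricts to the standard pull-back over $\pi^{-1}(\what{X}^{\circ})$, where it is an isomorphism; as the source is torsion free, injectivity on this dense open set forces $d_{\cC}\pi$ to be injective.

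For the ``if'' direction I would precompose a given $d_{\cC}\pi$ with the natural map $\pi^*\Omega^{[p]}_{(X,D,\gamma)}\to\pi^{[*]}\Omega^{[p]}_{(X,D,\gamma)}$ and pass through adjunction to obtain $\alpha:\Omega^{[p]}_{(X,D,\gamma)}\to\pi_*\Omega^p_{\wtilde{X}}(\log\pi^*\gamma^*\lfloor D\rfloor)$. Composing $\alpha$ with the inclusion of Remark~\ref{rem:4-2} yields a morphism $\Omega^{[p]}_{(X,D,\gamma)}\to\Omega^{[p]}_{\what{X}}(\log\gamma^*\lfloor D\rfloor)$ which, by the compatibility of $d_{\cC}\pi$ with the standard pull-back, agrees with the tautological inclusion over $\what{X}^{\circ}$. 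Since the target is torsion free and $\what{X}^{\circ}$ is dense, the two morphisms coincide; hence the image of $\Omega^{[p]}_{(X,D,\gamma)}$ is contained in that of $\pi_*\Omega^p_{\wtilde{X}}(\log\pi^*\gamma^*\lfloor D\rfloor)$, which is precisely Inclusion~\eqref{eq:4-3-1}.

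The substance of the argument is entirely soft; the main thing to get right is the bookkeeping of the various open sets — distinguishing the dense (but not big) open $\what{X}^{\circ}$, used to identify maps via torsion-freeness, from the big open on which $\pi^*\Omega^{[p]}_{(X,D,\gamma)}$ is locally free, used to legitimize the factorization through the reflexive hull — together with the verification that the adjoint of the tautological inclusion really is the standard pull-back of Kähler differentials over $\what{X}^{\circ}$.
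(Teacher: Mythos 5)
Your proposal follows the same route as the paper's own proof: the forward direction pulls back Inclusion~\eqref{eq:4-3-1}, composes with the counit $\pi^*\pi_*\to\operatorname{Id}$, and factors through the reflexive hull using local freeness of $\Omega^p_{\wtilde{X}}(\log\pi^*\gamma^*\lfloor D\rfloor)$; the converse direction passes $d_{\cC}\pi$ back through adjunction and compares subsheaves of $\Omega^{[p]}_{\what{X}}(\log\gamma^*\lfloor D\rfloor)$ via Inclusion~\eqref{eq:4-2-2}. You also address points the paper leaves implicit --- injectivity, agreement with the standard pull-back, and the fact that the resulting injection really is the containment of subsheaves demanded by Definition~\ref{def:4-3} --- which is welcome.

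However, both claims in your ``first observation'' are false as stated. First, the restriction of $\Omega^{[p]}_{(X,D,\gamma)}$ to $\what{X}^{\circ}$ is not $\Omega^p_{\what{X}^{\circ}}$ but $\gamma^*\Omega^p_{X_{\reg}}$, see Explanation~\ref{exp:4-8}: these differ along the ramification locus of $\gamma$ over $X_{\reg}\setminus\supp D$, and a general $q$-morphism $\gamma$ does ramify there (it is not assumed adapted or \'etale; already $\gamma(z)=z^2$ on $X=\mathbb{C}$, $D=0$, gives $\gamma^*\Omega^1_X=\mathcal{O}\cdot z\,dz\subsetneq\Omega^1_{\what{X}}$). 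Second, nothing in Setting~\ref{setting:4-1} forces the log resolution $\pi$ to be an isomorphism over the snc locus --- note that the paper reserves the term \emph{strict} log resolution for that property in Section~\ref{sec:7} --- so $\pi$ may well modify points of $\what{X}^{\circ}$. Consequently your assertions that the standard pull-back is an isomorphism over $\pi^{-1}(\what{X}^{\circ})$, and that $d_{\cC}\pi$ is an isomorphism there, fail in general. The damage is confined to the verification steps and is repairable with the density technique you already use: the locus over which $\pi$ fails to be biholomorphic has codimension at least two in the normal space $\what{X}$, so the set of points of $\what{X}^{\circ}$ over which $\pi$ is an isomorphism is dense; over that set Inclusion~\eqref{eq:4-2-2} is the tautological identification of forms upstairs and downstairs, so the adjoint of \eqref{eq:4-3-1} coincides there with the standard pull-back restricted to the subsheaf $\gamma^*\Omega^p_{X_{\reg}}$ (in the precise sense of Explanation~\ref{exp:4-8}), and torsion-freeness of the target sheaves propagates this agreement, together with injectivity, to all of $\pi^{-1}(\what{X}^{\circ})$ and then everywhere. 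With that substitution your argument is complete and coincides with the paper's.
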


Proposition~\ref{prop:4-7} will be shown below.  The phrase ``that agrees […]
with the standard pull-back of Kähler differentials'' might require an
explanation.

\begin{explanation}[Agreeing with the standard pull-back of Kähler differentials]\label{exp:4-8}%
  In the setup of Proposition~\ref{prop:4-7}, consider the non-trivial open set
  \[
    \what{X}° = \what{X}_{\reg} ∩ γ^{-1} (X_{\reg} ∖ \supp D)
  \]
  and recall from \cite[Ex.~4.6]{orbiAlb1} that on $\what{X}°$, the sheaf of
  adapted reflexive differentials take the simple form
  \[
    Ω^{[1]}_{(X,D,γ)}|_{X°} = γ^* Ω¹_{X_{\reg}}
  \]
  In particular, $Ω^{[1]}_{(X,D,γ)}|_{X°}$ is a subsheaf of the sheaf of Kähler
  differentials $Ω¹_{\what{X}°}$.  A sheaf morphism $d_{\cC} π$ agrees on
  $π^{-1} (\what{X}°)$ with the standard pull-back of Kähler differentials if
  there exists a commutative diagram of sheaves on $π^{-1} (\what{X}°)$,
  \[
  \begin{tikzcd}[column sep=2cm]
    \left.π^{[*]} Ω^{[1]}_{(X,D,γ)} \right|_{π^{-1} (\what{X}°)} \ar[r, hook, "d_{\cC} π"] \ar[d, hook] & \left.  Ω¹_{\wtilde{X}}(\log \wtilde D) \right|_{π^{-1} (\what{X}°)} \ar[d, equal] \\
    \left(π|_{π^{-1} (\what{X}°)}\right)^* Ω¹_{\what{X}°} \ar[r, hook, "d (π|_{π^{-1} (\what{X}°)})", "\text{standard pull-back}"'] & Ω¹_{π^{-1} (\what{X}°)}.
  \end{tikzcd}
  \]
\end{explanation}

\begin{proof}[Proof of Proposition~\ref{prop:4-7}]
  Assume that a sequence of the form \eqref{eq:4-1-1} is given.  If adapted
  reflexive $p$-forms on $(X,D)$ extend, the pull-back of
  Inclusion~\eqref{eq:4-3-1} gives a morphism
  \[
    π^* Ω^{[p]}_{(X,D,γ)} ⊆ π^* π_* Ω^p_{\wtilde{X}} \bigl(\log π^*γ^* ⌊D⌋\bigr) → Ω^p_{\wtilde{X}} \bigl(\log π^*γ^* ⌊D⌋\bigr),
  \]
  which factors via the reflexive hull $π^{[*]} Ω^{[p]}_{(X,D,γ)}$ because the
  sheaf on the right is locally free.  Conversely, if a sheaf morphism of the
  form $d_{\cC} π$ is given, we have inclusions
  \begin{align*}
    Ω^{[p]}_{(X,D,γ)} & ⊆ π_* π^* Ω^{[p]}_{(X,D,γ)} ⊆ π_* π^{[*]} Ω^{[p]}_{(X,D,γ)} \\
    & ⊆ π_* Ω^p_{\wtilde{X}}(\log π^*γ^* ⌊D⌋) && \text{Inclusion }π_* d_{\cC} π \\
    & ⊆ Ω^{[p]}_{\what{X}} \bigl(\log γ^* ⌊D⌋\bigr) && \text{Inclusion~\eqref{eq:4-2-2}}
  \end{align*}
  as required in Definition~\ref{def:4-3}.
\end{proof}

\begin{cor}[Extension of adapted reflexive form on locally uniformizable pairs]
  Let $(X, D)$ be a $\cC$-pair and let $p ∈ ℕ$ be any number.  If $(X,D)$ is
  locally uniformizable, then adapted reflexive $p$-forms on $(X,D)$ extend to
  log resolutions of singularities.
\end{cor}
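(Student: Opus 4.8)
The plan is to deduce the corollary from Proposition~\ref{prop:4-7} together with the pull-back construction for locally uniformizable pairs in \cite[Sect.~5]{orbiAlb1}. By Proposition~\ref{prop:4-7}, it suffices to produce, for every sequence of the form \eqref{eq:4-1-1}, an injective sheaf morphism
\[
  d_{\cC} π : π^{[*]} Ω^{[p]}_{(X,D,γ)} ↪ Ω^p_{\wtilde{X}}(\log π^*γ^* ⌊D⌋)
\]
agreeing with the standard pull-back of Kähler differentials over $π^{-1}(\what{X}°)$.

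First I would observe that the log resolution $π : \wtilde{X} \twoheadrightarrow \what{X}$ is in particular a morphism from a smooth space, so it belongs to the broader class of morphisms for which \cite[Sect.~5]{orbiAlb1} defines the pull-back of adapted reflexive differentials on the locally uniformizable pair $(X,D)$. That construction provides exactly a morphism $d_{\cC}π$ with values in logarithmic $p$-forms along $π^*γ^*⌊D⌋$, and, by its very construction, this morphism restricts to the ordinary pull-back of Kähler differentials over the big open set $π^{-1}(\what{X}°)$, where $Ω^{[p]}_{(X,D,γ)}$ reduces to the pull-back $γ^*Ω¹_{X_{\reg}}$ in the sense of Explanation~\ref{exp:4-8}. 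Injectivity is then automatic: the source $π^{[*]} Ω^{[p]}_{(X,D,γ)}$ is reflexive, hence torsion free, and $d_{\cC}π$ agrees with an injection on the dense open set $π^{-1}(\what{X}°)$, so its kernel, being a subsheaf of a torsion-free sheaf supported in a proper analytic subset, must vanish.

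Feeding this $d_{\cC}π$ into Proposition~\ref{prop:4-7} yields Inclusion~\eqref{eq:4-3-1} for the chosen sequence; since the sequence was arbitrary, adapted reflexive $p$-forms on $(X,D)$ extend to log resolutions of singularities. The only genuine labour here is bookkeeping — matching the open sets and notational conventions of the present paper with those of \cite{orbiAlb1}, and confirming that the local uniformization charts are compatible with the logarithmic poles recorded by $⌊D⌋$. I do not anticipate a real obstacle, since Section~\ref{sec:4-2} was arranged precisely so that the extension statement becomes a formal translation of the pull-back property that \cite{orbiAlb1} already establishes for locally uniformizable pairs.
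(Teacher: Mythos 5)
Your proposal is correct and takes essentially the same route as the paper: both reduce the statement via Proposition~\ref{prop:4-7} to producing, for each sequence of the form \eqref{eq:4-1-1}, an injective pull-back morphism $d_{\cC}\pi$ agreeing with the standard pull-back over $\pi^{-1}(\what{X}°)$, and both obtain this morphism from the pull-back theory for locally uniformizable pairs in \cite[Sect.~5]{orbiAlb1}. The only difference is cosmetic: you re-derive injectivity from torsion-freeness of the reflexive source and genericity, whereas the paper takes injectivity directly from the cited reference.
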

\begin{proof}
  Given any sequence of the form \eqref{eq:4-1-1}, recall from
  \cite[Sect.~5]{orbiAlb1} that there exist injective sheaf morphisms
  \[
    d_{\cC} π : π^{[*]} Ω^{[p]}_{(X,D,γ)} ↪ Ω^p_{\wtilde{X}}(\log π^*γ^* ⌊D⌋)
  \]
  that agree on $π^{-1} (\what{X}°)$ with the standard
  pull-back of Kähler differentials.
\end{proof}

\subsection{Extension Criteria}
\label{sec:4-3}
\approvals{Erwan & yes \\ Frédéric & yes \\ Stefan & yes}

This section establishes criteria to guarantee extension of adapted reflexive
differentials.  To begin, we note that extendability is a local property.  The
elementary proof is left to the reader.

\begin{prop}[Local nature of the extension problem]\label{prop:4-10} Let $(X, D)$
  be a $\cC$-pair, let $p ∈ ℕ$ be a number and let $(U_α)_{α ∈ A}$ be a covering
  of $X$ by sets that are open in the analytic topology.  Then, the following
  statements are equivalent.
  \begin{enumerate}
  \item Adapted reflexive $p$-forms on $(X,D)$ extend to log resolutions of
    singularities.

  \item Adapted reflexive $p$-forms on $(U_α,D|_{U_α})$ extend to log
    resolutions of singularities, for every $α ∈ A$.  \qed
  \end{enumerate}
\end{prop}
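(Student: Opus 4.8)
The plan rests on three elementary observations. First, by Remark~\ref{rem:4-2} both sheaves in Inclusion~\eqref{eq:4-3-1} are coherent subsheaves of the single reflexive sheaf $\Omega^{[p]}_{\what{X}}(\log \gamma^*\lfloor D\rfloor)$; whether one is contained in the other is therefore a local question on the total space $\what{X}$ and may be tested on any open cover of $\what{X}$. Second, both sheaves are compatible with restriction to preimages of open subsets $U\subseteq X$: the push-forward $\pi_*\Omega^p_{\wtilde{X}}(\log\cdots)$ restricts because push-forward commutes with open restriction, and — this is the routine point to verify — the adapted sheaf $\Omega^{[p]}_{(X,D,\gamma)}$ restricted to $\gamma^{-1}(U)$ is constructed entirely from $(U,D|_U)$ and the restricted cover, since its definition is local over the image of $\gamma$. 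Third, and decisively, a cover in Setting~\ref{setting:4-1} is only required to be a $q$-morphism and in particular need not be surjective, so the composite of a cover of an open set with the open inclusion is again a cover of the ambient space.

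I would treat the implication from the global to the local statement first, as the third observation makes it essentially immediate. Fix an index $\alpha$ and any sequence of the form \eqref{eq:4-1-1} over $(U_\alpha,D|_{U_\alpha})$, say $\wtilde{X}\xrightarrow{\pi}\what{X}\xrightarrow{\gamma_\alpha}U_\alpha$. Composing $\gamma_\alpha$ with the inclusion $U_\alpha\hookrightarrow X$ produces a $q$-morphism $\gamma:\what{X}\to X$, that is, a cover of $X$. Since $\gamma$ factors through $U_\alpha$ we have $\gamma^*\lfloor D\rfloor=\gamma_\alpha^*\lfloor D|_{U_\alpha}\rfloor$, so $\pi$ is a log resolution of $(\what{X},\gamma^*\lfloor D\rfloor)$ and the given data is literally a sequence of the form \eqref{eq:4-1-1} over $(X,D)$. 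By the second observation the two sheaves attached to this sequence coincide with their $(U_\alpha,D|_{U_\alpha})$-counterparts, so the inclusion supplied by the global hypothesis is exactly the one demanded for $(U_\alpha,D|_{U_\alpha})$.

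For the converse I would restrict and glue. Given a sequence \eqref{eq:4-1-1} over $(X,D)$ and an index $\alpha$, put $V_\alpha:=\gamma^{-1}(U_\alpha)\subseteq\what{X}$. This open set may be reducible, so I would pass to its connected (equivalently, irreducible) components; each is a normal analytic variety, and $\gamma$ restricts on it to a $q$-morphism onto $U_\alpha$, while $\pi$ restricts to a log resolution of the corresponding pair. The restricted data is thus a sequence of the form \eqref{eq:4-1-1} over $(U_\alpha,D|_{U_\alpha})$, so the local hypothesis — which quantifies over all such sequences — yields Inclusion~\eqref{eq:4-3-1} over every component. Because the sets $V_\alpha$ cover $\what{X}$ and sheaf containment is local by the first observation, the inclusion holds on all of $\what{X}$.

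The only genuine subtlety, and what makes the two directions symmetric rather than leaving one of them hard, is the third observation that covers need not be surjective; without it the passage from the global to the local statement would require extending a cover of $U_\alpha$ to a cover of $X$, which is impossible in general. The remaining work is the bookkeeping flagged in the first paragraph: confirming that $\Omega^{[p]}_{(X,D,\gamma)}$ depends only on the pair near the image of $\gamma$, so that shrinking the base from $X$ to $U_\alpha$ alters neither side of \eqref{eq:4-3-1}, together with the harmless reduction to irreducible components of a possibly reducible preimage.
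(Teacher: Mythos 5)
The paper gives no proof of this proposition---it is explicitly left to the reader---so there is nothing to compare against; your argument is correct and is evidently the intended elementary one. All the ingredients check: containment of coherent subsheaves of $\Omega^{[p]}_{\what{X}}\bigl(\log \gamma^*\lfloor D\rfloor\bigr)$ is local on $\what{X}$, both sides of \eqref{eq:4-3-1} restrict compatibly over open subsets of $X$ (push-forward commutes with open restriction, and the adapted sheaf is defined locally over the image of $\gamma$), restrictions of log resolutions over open subsets of the base remain log resolutions, and---as you rightly single out as the decisive point---the morphism $\gamma$ in Setting~\ref{setting:4-1} is only a $q$-morphism and need not be surjective, so composing a sequence over $(U_\alpha, D|_{U_\alpha})$ with the open inclusion $U_\alpha \hookrightarrow X$ produces a legitimate sequence over $(X,D)$, which is what makes the global-to-local direction work.
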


As an almost immediate corollary of the stratified extension results of
Appendix~\ref{sec:A}, we find that adapted reflexive differentials extend if they
extend outside a high-codimension subset.

\begin{prop}[Restriction to very big open sets]\label{prop:4-11}%
  Let $(X, D)$ be a $\cC$-pair, let $p ∈ ℕ$ be a number and let $Z ⊂ X$ be an
  analytic subset of $\codim_X Z ≥ p+2$.  Then, the following statements are
  equivalent.
  \begin{enumerate}
  \item\label{il:4-11-1} Adapted reflexive $p$-forms on $(X,D)$ extend to log
    resolutions of singularities.

  \item\label{il:4-11-2} Adapted reflexive $p$-forms on $(X∖ Z,D|_{X∖ Z})$
    extend to log resolutions of singularities.
  \end{enumerate}
\end{prop}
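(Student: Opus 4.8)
The plan is to prove the two implications separately. Implication \ref{il:4-11-1}$\Rightarrow$\ref{il:4-11-2} is pure restriction and should follow formally from the locality statement, Proposition~\ref{prop:4-10}. Since $Z$ is analytic, $X ∖ Z$ is open, so I would choose an open covering $(U_α)_{α ∈ A}$ of $X$ one of whose members equals $X ∖ Z$ — for instance by adjoining small analytic-open neighbourhoods of the points of $Z$. The forward direction of Proposition~\ref{prop:4-10} then says that extension on $(X,D)$ forces extension on every $(U_α, D|_{U_α})$, and in particular on $(X ∖ Z,\, D|_{X ∖ Z})$, which is exactly \ref{il:4-11-2}.

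All the content lies in \ref{il:4-11-2}$\Rightarrow$\ref{il:4-11-1}. Fix a sequence of the form \eqref{eq:4-1-1}. By Remark~\ref{rem:4-2}, the two sheaves appearing in the target Inclusion~\eqref{eq:4-3-1} are both subsheaves of $Ω^{[p]}_{\what X}(\log γ^* ⌊D⌋)$, so \eqref{eq:4-3-1} may be tested locally on $\what X$. Write $\what Z := γ^{-1}(Z)$. Since $γ$ is a $q$-morphism, i.e. quasi-finite between normal varieties of equal dimension, each component of $\what Z$ maps finitely into $Z$, so $γ$ does not decrease codimension and $\codim_{\what X} \what Z ≥ p+2$. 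Over the complement $\what X ∖ \what Z$ the inclusion is already known: applying hypothesis \ref{il:4-11-2} to the restriction of \eqref{eq:4-1-1} over $X ∖ Z$ (a genuine sequence of the same form for the pair $(X ∖ Z, D|_{X ∖ Z})$) shows that the logarithmic form on $\wtilde X$ attached to any local section of $Ω^{[p]}_{(X,D,γ)}$ extends over all of $\wtilde X ∖ π^{-1}(\what Z)$, with poles only along $π^* γ^* ⌊D⌋$. It remains to extend such a logarithmic $p$-form across $π^{-1}(\what Z) ⊆ \wtilde X$.

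This final extension is precisely what the stratified extension results of Appendix~\ref{sec:A} are designed to supply, and the hypothesis $\codim_X Z ≥ p+2$ is calibrated exactly to their input; granting them, Inclusion~\eqref{eq:4-3-1} can be read off through Remark~\ref{rem:4-2}, completing the proof. I expect the genuine obstacle to be conceptual rather than computational. The set $π^{-1}(\what Z)$ is in general divisorial: it may contain $π$-exceptional prime divisors $E$ with $π(E) ⊆ \what Z$, and a logarithmic $p$-form does not extend across a divisor for free. The mechanism that resolves this is dimensional. Because $\codim_{\what X} π(E) ≥ p+2$ while $\codim_{\wtilde X} E = 1$, the fibres of $E → π(E)$ have dimension at least $p+1$, hence strictly larger than $p$; this is large enough to force the residue of any prospective logarithmic pole of the form along $E$ to vanish, so that the form in fact extends holomorphically across $E$, acquiring no pole beyond those along $π^* γ^* ⌊D⌋$. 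The lower-dimensional, non-divisorial strata of $π^{-1}(\what Z)$ are then cleared by the standard codimension-$\geq 2$ Hartogs extension on the smooth space $\wtilde X$. Organising this stratum-by-stratum induction is the work carried out in Appendix~\ref{sec:A}.
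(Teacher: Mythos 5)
Your proposal is correct and follows essentially the same route as the paper: both reduce the substantial implication \ref{il:4-11-2}$\Rightarrow$\ref{il:4-11-1} to the stratified extension result of Appendix~\ref{sec:A} (Theorem~\ref{thm:A-1}, in sheaf form Corollary~\ref{cor:A-2}), applied on $\what{X}$ to $\gamma^{-1}(Z)$ after observing, as you do, that quasi-finiteness of $\gamma$ preserves the codimension bound; the paper merely packages the same argument as push--pull identities, namely $\iota_*\iota^*$ of the two outer sheaves equals the sheaves themselves by reflexivity, while $\iota_*\iota^*\pi_*\Omega^p_{\wtilde{X}}(\log \pi^*\gamma^*\lfloor D\rfloor) = \pi_*\Omega^p_{\wtilde{X}}(\log \pi^*\gamma^*\lfloor D\rfloor)$ is Corollary~\ref{cor:A-2}. (Your closing heuristic about vanishing residues is not the actual mechanism inside Theorem~\ref{thm:A-1}, whose proof runs through bigness on fibres and Bogomolov--Sommese vanishing, but since you invoke the Appendix as a black box exactly as the paper's proof does, this does not affect correctness.)
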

\begin{proof}
  Only the implication \ref{il:4-11-2}$⇒$\ref{il:4-11-1} is interesting.
  Assuming that adapted reflexive $p$-forms on $(X∖ Z,D|_{X∖ Z})$ extend to log
  resolutions of singularities, consider a sequence of morphisms as in
  \eqref{eq:4-1-1} of Setting~\ref{setting:4-1}.  We need to check
  Inclusion~\eqref{eq:4-3-1} of Definition~\ref{def:4-3}.  To this end, consider
  the inclusion $ι : γ^{-1} (X ∖ Z) ↪ \what{X}$.  Assumption~\ref{il:4-11-2}
  will then give inclusions between sheaves
  \[
    ι^* Ω^{[p]}_{(X,D,γ)} ⊆ ι^* π_* Ω^p_{\wtilde{X}}(\log π^*γ^* ⌊D⌋) ⊆ ι^* Ω^{[p]}_{\what{X}}(\log γ^* ⌊D⌋).
  \]
  Since the push-forward $ι_*$ preserves inclusions, we obtain
  \begin{equation}\label{eq:4-11-3}
    ι_* ι^* Ω^{[p]}_{(X,D,γ)} ⊆ ι_* ι^* π_* Ω^p_{\wtilde{X}}(\log π^*γ^* ⌊D⌋) ⊆ ι_* ι^* Ω^{[p]}_{\what{X}}(\log γ^* ⌊D⌋).
  \end{equation}
  But reflexivity shows that
  \begin{align*}
    ι_* ι^* Ω^{[p]}_{(X,D,γ)} & = Ω^{[p]}_{(X,D,γ)} \\
    ι_* ι^* Ω^{[p]}_{\what{X}}(\log γ^* ⌊D⌋) & = Ω^{[p]}_{\what{X}}(\log γ^* ⌊D⌋) \\
    \intertext{and Corollary~\ref{cor:A-2} of Appendix~\ref{sec:A} gives}
    ι_* ι^* π_* Ω^p_{\wtilde{X}}(\log π^*γ^* ⌊D⌋) & = π_* Ω^p_{\wtilde{X}}(\log π^*γ^* ⌊D⌋).
  \end{align*}
  This presents \eqref{eq:4-11-3} as a reformulation of the desired
  Inclusion~\eqref{eq:4-3-1}.
\end{proof}

Taken at face value, Definition~\ref{def:4-3} (``Extension to log resolutions of
singularities'') required us to check every sequence of the form
\eqref{eq:4-1-1}.  The following proposition simplifies the task, showing that
it suffices to consider sequence where the $q$-morphism $γ$ is adapted.

\begin{prop}[Restriction to adapted $q$-morphisms]\label{prop:4-12}%
  Let $(X, D)$ be a $\cC$-pair and let $p ∈ ℕ$ be any number.  Assume that for
  every sequence of morphisms of the following form,
  \[
    \begin{tikzcd}[column sep=4cm]
      \wtilde{X} \ar[r, two heads, "ρ\text{, log resolution of } (\what{X}\protect{,} δ^* ⌊D⌋)"] & \what{X} \ar[r, two heads, "δ\text{, adapted $q$-morphism}"] & X,
    \end{tikzcd}
  \]
  there is an inclusion
  \begin{equation}\label{eq:4-12-1}
    Ω^{[p]}_{(X,D,δ)} ⊆ ρ_* Ω^p_{\wtilde{X}}(\log ρ^*δ^* ⌊D⌋).
  \end{equation}
  Then, adapted reflexive $p$-forms on $(X,D)$ extend to log resolutions of
  singularities.
\end{prop}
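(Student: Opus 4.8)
The plan is to reduce the general statement to the adapted case that is granted by hypothesis, by dominating the arbitrary $q$-morphism $\gamma$ by an adapted one and then transporting the conclusion back down along the dominating cover. Fix an arbitrary sequence of the form \eqref{eq:4-1-1}, with log resolution $\pi : \wtilde X \twoheadrightarrow \what X$ and $q$-morphism $\gamma : \what X \to X$. First I would choose, following the constructions of adapted covers in \cite{orbiAlb1}, a $q$-morphism $\beta : \what X' \twoheadrightarrow \what X$ such that the composition $\delta := \gamma \circ \beta$ is adapted for $(X,D)$; recall that a composition of $q$-morphisms is again a $q$-morphism. Next I would fix a log resolution $\rho : \wtilde X' \twoheadrightarrow \what X'$ of $(\what X', \delta^* ⌊D⌋)$ dominating the main component of the fibre product $\what X' ×_{\what X} \wtilde X$, which produces a surjective, generically finite morphism $\mu : \wtilde X' \to \wtilde X$ of smooth varieties with $\beta \circ \rho = \pi \circ \mu$. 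Applying the hypothesis to the adapted sequence $\wtilde X' \to \what X' \to X$ yields the inclusion $Ω^{[p]}_{(X,D,δ)} ⊆ ρ_* Ω^p_{\wtilde X'}(\log ρ^*δ^* ⌊D⌋)$.

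Since both sides of the target Inclusion~\eqref{eq:4-3-1} are reflexive subsheaves of $Ω^{[p]}_{\what X}(\log γ^* ⌊D⌋)$ by Remark~\ref{rem:4-2}, it suffices to show that every local section $\what σ$ of $Ω^{[p]}_{(X,D,γ)}$ lies in $π_* Ω^p_{\wtilde X}(\log π^*γ^* ⌊D⌋)$. Over the open set $\what X°$ of Setting~\ref{setting:4-1} the adapted sheaves are honest pull-backs $γ^* Ω¹_{X_{\reg}}$ and $δ^* Ω¹_{X_{\reg}}$ (Explanation~\ref{exp:4-8}), so the ordinary pull-back $β^*$ extends by reflexivity to a map carrying $\what σ$ to a section $\what σ'$ of $Ω^{[p]}_{(X,D,δ)}$; by the displayed inclusion this $\what σ'$ extends to a logarithmic form $\wtilde σ' \in H⁰\bigl(\wtilde X',\, Ω^p_{\wtilde X'}(\log ρ^*δ^* ⌊D⌋)\bigr)$. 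Writing $\wtilde σ := π^* \what σ$ for the rational $p$-form on $\wtilde X$, the relation $β \circ ρ = π \circ μ$ gives $μ^* \wtilde σ = ρ^* \what σ' = \wtilde σ'$ as rational forms on $\wtilde X'$. Thus the problem becomes: a rational $p$-form on $\wtilde X$ whose $μ$-pull-back is logarithmic along $ρ^*δ^* ⌊D⌋$ is itself logarithmic along $π^*γ^* ⌊D⌋$.

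This descent is the crux of the argument. Working at the generic point of each prime divisor $E ⊂ \wtilde X$ and choosing a prime divisor $E' ⊂ \wtilde X'$ dominating $E$, an order-of-pole computation along the ramified map $μ$ (in local coordinates $z = w^e$ one has $μ^*(dz/z^k) = e\, w^{-((k-1)e+1)}\,dw$, of pole order $\le 1$ only when $k \le 1$) shows that a logarithmic, resp.\ regular, pole of $\wtilde σ'$ along $E'$ forces at most a logarithmic, resp.\ regular, pole of $\wtilde σ$ along $E$. The delicate point — and the main obstacle — is the matching of the two logarithmic loci across the exceptional divisors of $ρ$ and $π$. Here I would use that logarithmic poles are taken along the \emph{reduced preimages} $π^{-1}(\supp γ^* ⌊D⌋)$ and $ρ^{-1}(\supp δ^* ⌊D⌋)$ (Notation as in \ref{il:2-5-2} and its analogue), so that any exceptional divisor centred in the boundary already belongs to the permitted log locus: whenever the only divisors $E'$ dominating $E$ lie in $\supp ρ^*δ^* ⌊D⌋$, one checks they must be $ρ$-exceptional over $δ^{-1}\supp ⌊D⌋$, whence $π(E) ⊆ \supp γ^* ⌊D⌋$ and $E ⊆ \supp π^*γ^* ⌊D⌋$; conversely, if $E$ is outside the log locus one can always find a dominating $E'$ outside it too, yielding regularity. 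Since the finitely many leftover $μ$-exceptional divisors contract to codimension $\ge 2$ and the locally free sheaf $Ω^p_{\wtilde X}(\log π^*γ^* ⌊D⌋)$ is insensitive to such loci, this places $\wtilde σ$ in the desired sheaf, proving \eqref{eq:4-3-1} for the arbitrary sequence. The existence of the adapted dominating cover and the pull-back of adapted forms are standard inputs from \cite{orbiAlb1}; the genuinely new content is this log-regularity descent along $μ$.
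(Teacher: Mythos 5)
Your skeleton --- dominate $γ$ by an adapted $q$-morphism, apply the hypothesis upstairs, descend --- is also the paper's, but as written there are two genuine gaps. First, your opening step, choosing $β$ with $δ = γ◦β$ adapted, is not available for an arbitrary analytic $\cC$-pair: adapted covers of $(X,D)$ need not exist globally. The paper first localizes via Proposition~\ref{prop:4-10}, reducing to the case where $X$ is Stein and $D$ has finitely many components, and only then obtains an adapted cover from \cite[Lem.~2.36]{orbiAlb1}, pulling it back to $\what{X}$ by a fibre-product construction (the composite being adapted by \cite[Obs.~2.27]{orbiAlb1}). Since Definition~\ref{def:4-3} quantifies over all sequences \eqref{eq:4-1-1}, this localization must happen \emph{before} you fix your sequence and choose $β$; omitting it is a real, though repairable, gap. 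Second, your descent mechanism needs every prime divisor $E ⊂ \wtilde{X}$ to be dominated by some prime divisor $E' ⊂ \wtilde{X}'$, otherwise the pole analysis says nothing about $\wtilde{σ}$ along $E$. You assert that $μ$ is surjective and generically finite, but this is not automatic when $β$ is merely a $q$-morphism: quasi-finite maps are open immersions followed by finite maps, and after base change along $π$ the main component of $\what{X}' ⨯_{\what{X}} \wtilde{X}$ need not dominate every $π$-exceptional divisor by a divisor --- justifying this for quasi-finite $β$ requires a genuine (curve-lifting) argument. The clean repair is to require $β$ to be an honest finite cover, which the construction of \cite{orbiAlb1} provides after localization; then $μ$ is proper, surjective and generically finite, and divisor domination is immediate.

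Granting these repairs, your route is viable, but it inverts where the difficulty lies. The ``delicate matching of log loci'' is in fact automatic: from $δ = γ◦β$ and $π◦μ = β◦ρ$ one gets $ρ^{-1}(\supp δ^*⌊D⌋) = μ^{-1}\bigl(π^{-1}(\supp γ^*⌊D⌋)\bigr)$, so a divisor $E'$ dominating $E$ lies in the upstairs log locus if and only if $E$ lies in the downstairs one; no case analysis involving $ρ$-exceptional divisors is needed. Conversely, the step you treat as a routine computation is the actual crux: for $p$-forms in several variables one must control coefficients acquiring poles and the degeneration of $dμ$ along the ramification divisor, and the displayed one-variable formula for $μ^*(dz/z^k)$ does not by itself cover this. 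The paper sidesteps the pointwise analysis entirely: it takes the Galois closure of the dominating cover (group $G$), chooses a $G$-equivariant log resolution, and deduces \eqref{eq:4-3-1} in a single chain from the invariant push-forward identities $Ω^{[p]}_{(X,D,γ)} = \bigl(q_* Ω^{[p]}_{(X,D,δ)}\bigr)^G$ and $\bigl(\wtilde{q}_* Ω^p_{\wtilde{Y}}(\log ρ^*δ^*⌊D⌋)\bigr)^G = Ω^p_{\wtilde{X}}(\log π^*γ^*⌊D⌋)$, both covered by \cite[Lem.~4.20]{orbiAlb1}. Your valuative descent amounts to reproving the second identity by hand; either carry out that multivariable computation in full (or cite it), or --- simpler --- impose Galois structure on $β$ as the paper does and conclude by invariance.
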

\begin{proof}
  Recall from Proposition~\ref{prop:4-10} that the extension property is local on
  $X$.  We may therefore assume without loss of generality that $X$ is Stein and
  that the divisor $D$ has only finitely many components.  By
  \cite[Lem.~2.36]{orbiAlb1}, this implies the existence of an adapted cover,
  say $μ : \what{W} \twoheadrightarrow X$.

  In order to verify the conditions spelled out in Definition~\ref{def:4-3},
  assume that a sequence of morphism is given as in \eqref{eq:4-1-1} of
  Setting~\ref{setting:4-1}.  Next, construct a diagram of the following form,
  \[
    \begin{tikzcd}[column sep=4cm, row sep=1cm]
      \wtilde{Y} \ar[r, two heads, "ρ\text{, equivariant log resolution}"] \ar[d, two heads, "\wtilde{q}\text{, gen.~finite}"'] & \what{Y} \ar[r, "δ\text{, adapted $q$-morphism}"] \ar[d, two heads, "q\text{, Galois cover}"] & X \ar[d, equal] \\
      \wtilde{X} \ar[r, two heads, "π\text{, log resolution}"'] & \what{X} \ar[r, "γ\text{, $q$-morphism}"'] & X.
    \end{tikzcd}
  \]
  To spell the construction out in detail:
  \begin{itemize}
  \item In order to construct the right square, consider the fibre product
    $\what{W} ⨯_X \what{X}$ and take $\what{Y}$ as the Galois closure over
    $\what{X}$.  The natural morphism $δ$ will then factor via $μ$ and is
    then, by \cite[Obs.~2.27]{orbiAlb1}, itself adapted.  The natural morphism
    $q$ is a cover because $μ$ is.  Denote the Galois group by $G$.
  
  \item In order to construct the left square, let $ρ$ be a $G$-equivariant
    log resolution of the $G$-variety $\wtilde{X} ⨯_{\what{X}} \what{Y}$.
  \end{itemize}
  The desired inclusion~\eqref{eq:4-3-1} will now follow from standard
  considerations involving $G$-invariant push-forward:
  \begin{align*}
    Ω^{[p]}_{(X,D,γ)} & = \left(q_* Ω^{[p]}_{(X,D,δ)}\right)^G && \text{by \cite[Lem.~4.20]{orbiAlb1}} \\
    & ⊆ \left(q_* ρ_* Ω^p_{\wtilde{Y}}(\log ρ^* δ^* ⌊D⌋)\right)^G && \text{Assumption~\eqref{eq:4-12-1}} \\
    & = \left(π_* \wtilde{q}_* Ω^p_{\wtilde{Y}}(\log ρ^* δ^* ⌊D⌋)\right)^G && \text{Commutativity} \\
    & = π_* \left(\wtilde{q}_* Ω^p_{\wtilde{Y}}(\log ρ^* δ^* ⌊D⌋)\right)^G && \text{$G$-invariance} \\
    & = π_* Ω^p_{\wtilde{X}}(\log π^* γ^* ⌊D⌋)
  \end{align*}
  The last equality is a standard fact of the geometry of logarithmic pairs, but
  also follows from \cite[Lem.~4.20]{orbiAlb1}, using that $ρ^* δ^* ⌊D⌋$ and
  $\wtilde{q}^{\,*} π^* γ^* ⌊D⌋$ have identical support.
\end{proof}

We conclude with a proposition showing extension of adapted reflexive $p$-forms
on pairs admitting particularly nice covers, called ``perfectly adapted'' in
\cite[Def.~10]{Nunez24}.

\begin{prop}[Extension on existence of good covering spaces]\label{thm:4-13}%
  Let $(X, D)$ be a $\cC$-pair and let $p ∈ ℕ$ be any number.  Assume that there
  exists a strongly adapted cover $γ : \what{X} \twoheadrightarrow X$ where
  $\what{X}$ is smooth and $γ^* ⌊D⌋$ has snc support.  Then, adapted reflexive
  $p$-forms on $(X,D)$ extend to log resolutions of singularities.
\end{prop}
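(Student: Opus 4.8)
The plan is to reduce to adapted $q$-morphisms via Proposition~\ref{prop:4-12} and then to exploit the good cover $γ$ by a Galois-descent argument modelled on the proof of that proposition. By Proposition~\ref{prop:4-12} it suffices to verify Inclusion~\eqref{eq:4-12-1} for an arbitrary adapted $q$-morphism $δ : \what{X}' \twoheadrightarrow X$ and a log resolution $ρ : \wtilde{X}' → \what{X}'$ of $(\what{X}', δ^* ⌊D⌋)$. The point of the hypothesis is that on the good cover the extension problem is vacuous: because $γ$ is strongly adapted we have ramification exactly $m_i$ over each finite-multiplicity component, so the local computation $z_i = w_i^{m_i}$ turns the generators of the adapted sheaf into regular differentials $dw_i$ along the finite-multiplicity preimages and into logarithmic differentials along $γ^*⌊D⌋$. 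Since $⌊D⌋$ collects precisely the components of infinite multiplicity, this gives the identity
\[
  Ω^{[p]}_{(X,D,γ)} = Ω^p_{\what{X}}\bigl(\log γ^* ⌊D⌋\bigr),
\]
so that extension holds trivially for $γ$ itself (take $π = \Id$).

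Next I would build a common cover. Let $q : \what{Y} \twoheadrightarrow \what{X}'$ be the Galois closure (with group $G$) of the normalised fibre product $\what{X}' ⨯_X \what{X}$ over $\what{X}'$; it carries a second morphism $q' : \what{Y} → \what{X}$ with $δ ◦ q = γ ◦ q' =: γ''$, and $γ''$ is again an adapted $q$-morphism by \cite[Obs.~2.27]{orbiAlb1}. Choosing a $G$-equivariant log resolution $ρ_Y : \wtilde{Y} → \what{Y}$ of $(\what{Y}, γ''^* ⌊D⌋)$ and resolving $\wtilde{X}' ⨯_{\what{X}'} \what{Y}$ equivariantly, I obtain, exactly as in the proof of Proposition~\ref{prop:4-12}, a commutative diagram with a generically finite, $G$-invariant morphism $\wtilde{q} : \wtilde{Y} → \wtilde{X}'$ lying over $ρ$. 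The heart of the argument is to prove extension for the composite cover $γ''$. Since $γ''$ factors through the good cover via $q'$, the chain rule for adapted differentials together with the identity above yields
\[
  Ω^{[p]}_{(X,D,γ'')} ⊆ q'^{[*]} Ω^p_{\what{X}}\bigl(\log γ^* ⌊D⌋\bigr).
\]
As $γ''^* ⌊D⌋ = q'^* γ^* ⌊D⌋$, the preimage of $\Supp γ^* ⌊D⌋$ under $q' ◦ ρ_Y$ is contained in $\Supp ρ_Y^* γ''^* ⌊D⌋$, so the standard pull-back of logarithmic forms along the morphism $q' ◦ ρ_Y$ from the smooth snc pair $(\what{X}, γ^* ⌊D⌋)$ lands in $Ω^p_{\wtilde{Y}}(\log ρ_Y^* γ''^* ⌊D⌋)$. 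Pushing forward and combining the two facts gives $Ω^{[p]}_{(X,D,γ'')} ⊆ (ρ_Y)_* Ω^p_{\wtilde{Y}}(\log ρ_Y^* γ''^* ⌊D⌋)$, i.e.\ extension for $γ''$.

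Finally I would descend from $γ''$ to $δ$ by $G$-invariance, precisely as in Proposition~\ref{prop:4-12}. Using the descent formula \cite[Lem.~4.20]{orbiAlb1} and commutativity $q ◦ ρ_Y = ρ ◦ \wtilde{q}$, one computes
\[
  Ω^{[p]}_{(X,D,δ)} = \bigl(q_* Ω^{[p]}_{(X,D,γ'')}\bigr)^G ⊆ \bigl(ρ_* \wtilde{q}_* Ω^p_{\wtilde{Y}}(\log ρ_Y^* γ''^* ⌊D⌋)\bigr)^G = ρ_* Ω^p_{\wtilde{X}'}(\log ρ^* δ^* ⌊D⌋),
\]
where the last step pulls $ρ_*$ out of the $G$-invariants ($\wtilde{q}$ is generically the quotient) and uses that $ρ_Y^* γ''^* ⌊D⌋$ and $\wtilde{q}^{\,*} ρ^* δ^* ⌊D⌋$ have identical support. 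This is Inclusion~\eqref{eq:4-12-1}, and Proposition~\ref{prop:4-12} then completes the proof. I expect the main obstacle to be the displayed chain-rule inclusion $Ω^{[p]}_{(X,D,γ'')} ⊆ q'^{[*]} Ω^p_{\what{X}}(\log γ^* ⌊D⌋)$: it formalises the idea that, once the $\cC$-structure has been absorbed into the smooth snc pair $(\what{X}, γ^* ⌊D⌋)$ by the strongly adapted cover, every further adapted differential is a genuine pull-back of a logarithmic form. The underlying local computation is transparent for $1$-forms (the adapted generator $u^{k-1}\,du$ equals $k^{-1}q'^* dw$), but it must be carried out carefully for $p$-forms and across the reflexive hull.
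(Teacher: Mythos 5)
Your proposal keeps the paper's outer skeleton — reduce everything to Proposition~\ref{prop:4-12} — but then takes a genuinely different route: the paper's entire proof is a one-line citation of Núñez, who shows in \cite[Lem.~21]{Nunez24} that the hypotheses of Proposition~\ref{prop:4-12} are satisfied in the presence of such a cover, whereas you re-prove that input from scratch, via the chain-rule inclusion $Ω^{[p]}_{(X,D,γ'')} ⊆ q'^{[*]} Ω^p_{\what{X}}(\log γ^* ⌊D⌋)$ for covers factoring through the good cover, functoriality of logarithmic pull-back to get extension for the composite cover $γ''$, and the same Galois-descent computation that the paper uses to prove Proposition~\ref{prop:4-12} itself. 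The paper's approach buys brevity and a clean division of labour; yours buys self-containedness and makes explicit which features of the good cover actually matter (smoothness of $\what{X}$, snc-ness of $γ^* ⌊D⌋$, and compatibility of adapted sheaves under composition of covers).

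Two points need attention before this is complete. First, the asserted identity $Ω^{[p]}_{(X,D,γ)} = Ω^p_{\what{X}}\bigl(\log γ^* ⌊D⌋\bigr)$ is false in general: covers in this theory may ramify over divisors \emph{not} contained in $\supp D$ (this freedom is exactly why Theorem~\ref{thm:1-3} strengthens Campana's étale statement), and along such ramification divisors one has $Ω^{[1]}_{(X,D,γ)} = γ^* Ω¹_{X_{\reg}}$ by \cite[Ex.~4.6]{orbiAlb1}, a proper subsheaf of $Ω¹_{\what{X}}$. Only the inclusion ``$⊆$'' holds — this is Inclusion~\eqref{eq:4-2-1} — but your argument in fact only ever uses the inclusion, both for the warm-up statement about $γ$ itself and inside the chain rule, so the error is harmless. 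Second, and more substantively, the chain-rule inclusion you flag as the main obstacle really is the crux: it is the \emph{reverse} of the inclusion available in the reference (\cite[Obs.~4.14]{orbiAlb1} maps reflexive pull-backs of adapted forms \emph{into} the adapted forms of the composite), so it cannot simply be quoted. It does hold, and completes along the lines you indicate: at a general point of any prime divisor of $\what{Y}$ lying over $(X,D)_{\reg}$, both covers have the normal form $z ↦ v^k$, ramification indices multiply along $γ'' = γ ◦ q'$, and the adapted generators correspond up to units (this works for infinite multiplicities and for $p$-forms by wedging the rank-one computation); all remaining bad loci (preimages of $X_{\sing}$, of the non-snc locus of $D$, and of non-normal-form points) are small because $γ''$ is quasi-finite; and since both sides are reflexive subsheaves of $Ω^{[p]}_{\what{Y}}\bigl(\log γ''^* ⌊D⌋\bigr)$ — the right-hand side because $Ω^p_{\what{X}}(\log γ^* ⌊D⌋)$ is locally free, so that $q'^{[*]} = q'^*$ injects into the log sheaf — agreement in codimension one yields the inclusion globally. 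With this step carried out, your argument is a correct proof.
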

\begin{proof}
  Using the existence of $γ$, Núñez has shown in \cite[Lem.~21]{Nunez24} that
  the conditions of Proposition~\ref{prop:4-12} hold.
\end{proof}

\subsection{Proof of Theorem~\ref{thm:1-6}}
\label{sec:4-4}
\approvals{Erwan & yes \\ Frédéric & yes \\ Stefan & yes}
\CounterStep

Proposition~\ref{prop:4-11} allows removing algebraic subsets $Z ⊂ X$ of
codimension $\codim_X Z ≥ 3$.  By \cite[Prop.~6.19]{Gra13}, we can therefore
assume that the following dichotomy holds for every point $x ∈ X$.
\begin{enumerate}
\item The point $x$ is contained in $X° := X ∖ \supp ⌊D⌋$.

\item There exists an algebraic neighbourhood $U = U(x) ⊆ X$ and strongly
  adapted cover $γ : \what{U} \twoheadrightarrow U$ where $\what{U}$ is smooth
  and $γ^* ⌊D⌋$ has snc support.
\end{enumerate}
Recalling from Proposition~\ref{prop:4-10} that the extension problem is local, we may
assume that one of the following holds.
\begin{enumerate}
\item\label{il:4-14-3} We have $⌊D⌋ = 0$.

\item\label{il:4-14-4} There exists a strongly adapted cover $γ : \what{X}
  \twoheadrightarrow X$ where $\what{X}$ is smooth and $γ^* ⌊D⌋$ has snc
  support.
\end{enumerate}
In Case~\ref{il:4-14-3}, Núñez has shown in \cite[Thm.~1]{Nunez24} that the
assumptions of Proposition~\ref{prop:4-12} hold.  In Case~\ref{il:4-14-4}, the
claim follows from Proposition~\ref{thm:4-13}.  \qed

%
%
\svnid{$Id: 05-gsheaves.tex 109 2025-12-11 11:37:43Z kebekus $}
\selectlanguage{british}

\section{Bogomolov sheaves and linear systems in reflexive $G$-sheaves}
\subversionInfo

\subsection{The Kodaira dimension for sheaves of adapted reflexive differentials}
\approvals{Erwan & yes \\ Frédéric & yes \\ Stefan & yes}

Before presenting the main result of the section in Proposition~\ref{prop:5-1}
below, we recall the notion of ``$\cC$-Kodaira-Iitaka'' dimension in brief for
the reader's convenience.  Full details are found in \cite[Sects.~4 and
6.2]{orbiAlb1}.

\subsubsection{Tensors on manifolds}
\approvals{Erwan & yes \\ Frédéric & yes \\ Stefan & yes}

If $X$ is a manifold, classic geometry considers the symmetric algebra of
tensors on $X$.  Technically speaking, one considers sheaves $\Sym^n Ω^p_X$
together with symmetric product maps.  In particular, if $ℒ ⊆ Ω^p_X$ is
saturated of rank one, then $ℒ$ is invertible and the symmetric product sheaf
$\Sym^n ℒ$ is a saturated subsheaf of $\Sym^n Ω^p_X$.

\subsubsection{Adapted reflexive tensors on $\cC$-pairs}
\label{sec:5-1-2}
\approvals{Erwan & yes \\ Frédéric & yes \\ Stefan & yes}

If $(X,D)$ is a $\cC$-pair and $γ : \what{X} \twoheadrightarrow X$ is a cover,
the theory of $\cC$-pairs considers the symmetric algebra of ``adapted reflexive
tensors'' on $\what{X}$.  Technically speaking, one considers sheaves
$\Sym^{[n]}_{\cC} Ω^{[p]}_{(X,D,γ)}$ together with symmetric product maps.  In
particular, if $ℒ ⊆ Ω^{[p]}_{(X,D,γ)}$ is saturated of rank one, then $ℒ$ is
reflexive and $\Sym^{[n]} ℒ$ is a subsheaf $\Sym^{[n]}_{\cC} Ω^{[p]}_{(X,D,γ)}$.
In contrast to the manifold setting, this sheaf need not be saturated however,
and one defines the $\cC$-product sheaf
\[
  \Sym^{[n]}_{\cC} ℒ ⊆ \Sym^{[n]}_{\cC} Ω^{[p]}_{(X,D,γ)}
\]
as the saturation.  One can then consider the set
\begin{align*}
  M & := \left\{ m ∈ ℕ\, \left|\, h⁰\bigl( \what{X},\, \Sym^{[m]}_{\cC} ℒ \bigr) > 0 \right.  \right\} \\
  \intertext{and define the \emph{$\cC$-Kodaira-Iitaka dimension} as}
  κ_{\cC}(ℱ) & :=
  \left\{
  \begin{matrix*}[l]
    \max_{m ∈ M} \left\{ \dim \overline{\img φ_{\Sym^{[m]}_{\cC} ℒ}}\right\} & \text{if } M \ne ∅ \\
    -∞ & \text{if } M = ∅,
  \end{matrix*}
  \right.
\end{align*}
where $\overline{•}$ denotes Zariski closure in $ℙ^•$.

\subsection{Comparing $\cC$-Kodaira dimensions for sheaves on different covers}
\approvals{Erwan & yes \\ Frédéric & yes \\ Stefan & yes}

Every\-thing said in Section~\ref{sec:5-1-2} depends on the choice of the cover
$γ$.  Given that any two covers are dominated by a common third, the main result
of the present section explains what happens under a change of covers.

\begin{prop}[$\cC$-Kodaira dimension of sheaves of adapted differentials]\label{prop:5-1}%
  Let $(X,D)$ be a $\cC$-pair, where $X$ is compact.  Assume we are given a
  sequence of covers,
  \[
    \begin{tikzcd}[column sep=2.4cm]
      \what{X}_2 \ar[r, two heads, "α\text{, Galois cover}"'] \ar[rr, bend left=15, two heads, "γ\text{, cover}"] & \what{X}_1 \ar[r, two heads, "β\text{, cover}"'] & X,
    \end{tikzcd}
  \]
  where $α$ is Galois with group $G$.  If $p, d$ are any two numbers, then the
  following statements are equivalent.
  \begin{enumerate}
  \item\label{il:5-1-1} There exists a reflexive sheaf $ℱ_1 ⊆
    Ω^{[p]}_{(X,D,β)}$ of rank one with $κ_{\cC}(ℱ_1) ≥ d$.

  \item\label{il:5-1-2} There exists a reflexive $G$-subsheaf $ℱ_2 ⊆
    Ω^{[p]}_{(X,D,γ)}$ of rank one with $κ_{\cC}(ℱ_2) ≥ d$.
  \end{enumerate}
\end{prop}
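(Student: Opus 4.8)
The plan is to reduce everything to a single structural comparison of adapted reflexive differentials across the Galois cover $\alpha$. Since $\alpha : \what{X}_2 \to \what{X}_1$ is Galois with group $G$ and $\gamma = \beta \circ \alpha$, the computation underlying Proposition~\ref{prop:4-12} (via \cite[Lem.~4.20]{orbiAlb1}) yields the identity of reflexive sheaves on $\what{X}_1$,
\[
  Ω^{[p]}_{(X,D,β)} = \bigl(α_* Ω^{[p]}_{(X,D,γ)}\bigr)^G,
\]
together with its $\cC$-symmetric counterpart $\Sym^{[m]}_{\cC} Ω^{[p]}_{(X,D,β)} = (α_* \Sym^{[m]}_{\cC} Ω^{[p]}_{(X,D,γ)})^G$, obtained by the same reasoning. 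I would use this to set up a correspondence between rank-one subsheaves on the two covers: to a reflexive rank-one $ℱ_1 ⊆ Ω^{[p]}_{(X,D,β)}$ I associate the saturation $ℱ_2$ of the image of the natural pull-back $α^{[*]}ℱ_1 → Ω^{[p]}_{(X,D,γ)}$; conversely, to a $G$-invariant reflexive rank-one $ℱ_2 ⊆ Ω^{[p]}_{(X,D,γ)}$ I associate the reflexive hull of $(α_* ℱ_2)^G$, which lands in $Ω^{[p]}_{(X,D,β)}$ by the displayed identity. On the big open set where $α$ is étale these two operations are mutually inverse, so the remaining task is to check that they preserve the $\cC$-Kodaira–Iitaka dimension.

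For the implication \ref{il:5-1-1}$⇒$\ref{il:5-1-2}, I start from $ℱ_1$ and form $ℱ_2$ as above. Because $ℱ_1$ is pulled back from $\what{X}_1$, the sheaf $ℱ_2$ is automatically $G$-invariant and of rank one, hence an admissible candidate for \ref{il:5-1-2}. The inequality $κ_{\cC}(ℱ_2) ≥ κ_{\cC}(ℱ_1)$ is the easy half: pulling back along $α$ gives injections $H⁰(\what{X}_1, \Sym^{[m]}_{\cC}ℱ_1) ↪ H⁰(\what{X}_2, \Sym^{[m]}_{\cC}ℱ_2)$ for all $m$, and the map $φ_{\Sym^{[m]}_{\cC}ℱ_2}$ factors the composition of $α$ with $φ_{\Sym^{[m]}_{\cC}ℱ_1}$; since $α$ is generically finite, the closures of the images have equal dimension. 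Thus $κ_{\cC}(ℱ_2) ≥ κ_{\cC}(ℱ_1) ≥ d$.

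For \ref{il:5-1-2}$⇒$\ref{il:5-1-1}, I descend the given $G$-invariant $ℱ_2$ to the reflexive rank-one sheaf $ℱ_1 := \bigl((α_*ℱ_2)^G\bigr)^{**} ⊆ Ω^{[p]}_{(X,D,β)}$; as $ℱ_2$ is $G$-invariant, its reflexive pull-back $α^{[*]}ℱ_1$ has the same saturation as $ℱ_2$. It remains to prove $κ_{\cC}(ℱ_1) ≥ κ_{\cC}(ℱ_2)$, i.e.\ that pull-back along $α$ does not \emph{increase} the $\cC$-Kodaira dimension. \textbf{This is the crux and the expected main obstacle.} The plan is to exploit that, in characteristic zero, the trace map splits $𝒪_{\what{X}_1}$ off $α_* 𝒪_{\what{X}_2}$, so that the projection formula together with the $G$-invariant-push-forward identity for $\Sym^{[m]}_{\cC}$ expresses $h⁰(\what{X}_2, \Sym^{[m]}_{\cC}ℱ_2)$ as a sum of $h⁰(\what{X}_1, -)$ of finitely many fixed reflexive twists of $\Sym^{[m]}_{\cC}ℱ_1$, indexed by the isotypical pieces of $α_* 𝒪_{\what{X}_2}$. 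Bounding the growth in $m$ of these twisted spaces by that of $ℱ_1$ alone then gives $κ_{\cC}(ℱ_2) ≤ κ_{\cC}(ℱ_1)$, and hence equality.

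The genuinely delicate point is thus the last one: transferring the classical invariance $κ(f^*L) = κ(L)$ under finite covers to the reflexive, adapted-tensor setting, where one must verify that saturation, the reflexive symmetric powers $\Sym^{[m]}_{\cC}$, and the finitely many twisting sheaves coming from $α_* 𝒪_{\what{X}_2}$ interact so as to leave the Iitaka dimension unchanged. Once this is in place, both implications follow by combining the monotonicity of the easy half with this non-increase statement, and everything else is formal bookkeeping with reflexive sheaves and $G$-invariants.
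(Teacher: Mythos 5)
Your overall architecture is sound, and the easy half matches the paper: for \ref{il:5-1-1}$\Rightarrow$\ref{il:5-1-2} the paper likewise takes $ℱ_2 := α^{[*]}ℱ_1$ and gets $κ_{\cC}(ℱ_2) ≥ κ_{\cC}(ℱ_1)$ from the inclusions $α^{[*]}\Sym^{[n]}_{\cC}ℱ_1 ↪ \Sym^{[n]}_{\cC}ℱ_2$ of \cite[Obs.~4.14]{orbiAlb1}, and for the converse it descends via the $G$-invariant push-forward (its $ℰ_1 = \bigl(α_*\Sym^{[1]}_{\cC}ℱ_2\bigr)^G$ is your $ℱ_1$). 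The genuine gap is precisely at the point you flag as the crux: you never prove $κ_{\cC}(ℱ_2) ≤ κ_{\cC}(ℱ_1)$, and the plan you sketch would not go through as stated. First, the trace/projection-formula decomposition requires $\Sym^{[m]}_{\cC}ℱ_2$ to be a \emph{fixed} twist of the reflexive pull-back of a sheaf from $\what{X}_1$; but the $\cC$-product is by definition a saturation inside $\Sym^{[m]}_{\cC}Ω^{[p]}_{(X,D,γ)}$, and the divisorial correction it adds along the branch locus grows with $m$, so the sheaves you would have to control are not ``finitely many fixed reflexive twists of $\Sym^{[m]}_{\cC}ℱ_1$''. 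Second, your argument silently replaces the definition of $κ_{\cC}$ (the dimension of the image of the rational maps $φ_{\Sym^{[m]}_{\cC}•}$) by growth rates of $h⁰$; an Iitaka-type equivalence between the two is not available off the shelf in this setting, where $X$ is only assumed to be a compact normal analytic variety, not projective or even Kähler.

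There is also a conceptual point the trace argument misses, which is exactly what the paper's Proposition~\ref{prop:5-6} is built to handle: the obstruction is not the \emph{size} of $H⁰\bigl(\what{X}_2, \Sym^{[m]}_{\cC}ℱ_2\bigr)$, but that these sections need not be $G$-invariant and hence need not descend to $\what{X}_1$ at all. The paper resolves this by letting $G$ act on $L := H⁰\bigl(\what{X}_2, \Sym^{[m]}_{\cC}ℱ_2\bigr)$, choosing a very ample $ℋ$ on $ℙ(L)/G$ with $q_ℙ^*\,ℋ ≅ 𝒪_{ℙ(L)}(n)$, and observing that the $G$-invariant linear system $(\Sym^n L)^G$, viewed inside $H⁰\bigl(\what{X}_1, (α_*\Sym^{[nm]}_{\cC}ℱ_2)^G\bigr)$, already realizes $\dim \img φ_{\Sym^{[m]}_{\cC}ℱ_2}$, because all maps in the relevant diagram are finite. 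Combining this with the inclusion $(α_*\Sym^{[nm]}_{\cC}ℱ_2)^G ⊆ \Sym^{[nm]}_{\cC}ℱ_1$ — your descent-compatibility statement, which the paper proves by saturatedness downstairs plus agreement on a dense open set — yields the required inequality purely at the level of image dimensions, with no growth estimates and no projection formula. To complete your proof you would need to reproduce this invariant-theoretic step (or a genuine substitute for it); as written, the ``finitely many twists plus growth bound'' sketch is not a proof.
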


Proposition~\ref{prop:5-1} will be shown in Section~\vref{sec:5-4}.  As an
immediate corollary, we obtain a slight generalization of Graf's version of the
Bogomolov-Sommese vanishing theorem, \cite[Thm.~1.2]{Gra13}.  Following the
literature, we extend the notion of a ``Bogomolov sheaf'' to this context.

\begin{cor}[Bogomolov-Sommese vanishing for $G$-sheaves of adapted differentials]\label{cor:5-2}%
  Let $(X,D)$ be a log-canonical $\cC$-pair where $X$ is compact Kähler.  If $γ
  : \what{X} \twoheadrightarrow X$ is any cover that is Galois with group $G$,
  if $p$ is any number and $ℱ_1 ⊆ Ω^{[p]}_{(X,D,γ)}$ any $G$-subsheaf of rank
  one, then $κ_{\cC}(ℱ_1) ≤ p$.
\end{cor}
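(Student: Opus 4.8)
The plan is to derive the statement from Proposition~\ref{prop:5-1} by taking the intermediate cover to be the identity, which transports the $G$-subsheaf $\sF_1$ on the Galois cover $\what X$ down to an honest rank-one reflexive subsheaf of reflexive log $p$-forms on $X$ itself, where Graf's Bogomolov–Sommese vanishing theorem \cite[Thm.~1.2]{Gra13} applies. First I would argue by contraposition: suppose $\kappa_{\cC}(\sF_1) \geq p+1$ and set $d := p+1$.

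In the notation of Proposition~\ref{prop:5-1}, take $\what X_2 := \what X$ and $\alpha := \gamma$, the given Galois cover with group $G$, and take $\what X_1 := X$ with $\beta := \Id_X$, so that the composite cover is again $\gamma$. The sheaf $\sF_1$ of the corollary then plays the role of $\sF_2$ in the proposition, and the hypothesis $\kappa_{\cC}(\sF_1) \geq d$ is exactly statement~\ref{il:5-1-2}. By the equivalence, statement~\ref{il:5-1-1} holds: there is a reflexive rank-one sheaf $\sL \subseteq \Omega^{[p]}_{(X,D,\Id_X)}$ with $\kappa_{\cC}(\sL) \geq d = p+1$. It therefore suffices to show that no such $\sL$ can exist.

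To this end I would first identify the sheaf on the trivial cover. By Inclusion~\eqref{eq:4-2-1} we have $\Omega^{[p]}_{(X,D,\Id_X)} \subseteq \Omega^{[p]}_X(\log \lfloor D \rfloor)$, and on the big open set $(X,D)_{\reg}$ the two sheaves agree (cf.~\cite[Ex.~4.6]{orbiAlb1} and Explanation~\ref{exp:4-8}); since both are reflexive they coincide on all of $X$. Thus $\sL$ is a reflexive rank-one subsheaf of $\Omega^{[p]}_X(\log \lfloor D \rfloor)$. The principal point is then to compare the $\cC$-Kodaira–Iitaka dimension $\kappa_{\cC}(\sL)$, defined through the $\cC$-symmetric powers $\Sym^{[m]}_{\cC}\sL$, with an ordinary Kodaira–Iitaka dimension governed by Graf's theorem. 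On the trivial cover the $\cC$-symmetric product differs from the ordinary reflexive symmetric power only by the integral boundary twists coming from the fractional divisor $D_{\orb}$; these are supported on $\supp D$ and do not enlarge the dimension of the image of the associated maps $\varphi_{\Sym^{[m]}_{\cC}\sL}$ beyond the Kodaira–Iitaka dimension of a single rank-one reflexive subsheaf of $\Omega^{[p]}_X(\log \lfloor D \rfloor)$. Since $(X,D)$ is log-canonical, Graf's Bogomolov–Sommese vanishing \cite[Thm.~1.2]{Gra13} bounds the Kodaira–Iitaka dimension of every rank-one reflexive subsheaf of $\Omega^{[p]}_X(\log \lfloor D \rfloor)$ by $p$, and hence yields $\kappa_{\cC}(\sL) \leq p$, contradicting $\kappa_{\cC}(\sL) \geq p+1$.

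The main obstacle I anticipate is this last comparison: making precise the passage from the $\cC$-symmetric powers entering the definition of $\kappa_{\cC}$ to a fixed rank-one reflexive subsheaf of genuine reflexive log $p$-forms, and checking that the fractional boundary contributions are absorbed into an integral twist so that the log-canonical hypothesis and Graf's theorem apply verbatim. The reduction via Proposition~\ref{prop:5-1} with $\beta = \Id_X$ is purely formal by contrast.
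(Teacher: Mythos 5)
Your reduction step is exactly the paper's: apply Proposition~\ref{prop:5-1} with $\what{X}_2 = \what{X}$, $\alpha = \gamma$, $\beta = \Id_X$, so that the given $G$-subsheaf (after replacing it by its saturation in $\Omega^{[p]}_{(X,D,\gamma)}$, which is again a $G$-sheaf with $\kappa_{\cC}$ at least as large, as Proposition~\ref{prop:5-1}\ref{il:5-1-2} asks for a \emph{reflexive} sheaf) yields a rank-one reflexive sheaf $\sL \subseteq \Omega^{[p]}_{(X,D,\Id_X)}$ with $\kappa_{\cC}(\sL) \geq \kappa_{\cC}(\sF_1)$. The paper finishes in one line by citing \cite[Thm.~6.14]{orbiAlb1}: on a log-canonical $\cC$-pair with $X$ compact Kähler, every coherent rank-one subsheaf of $\Omega^{[p]}_{(X,D,\Id_X)}$ has $\kappa_{\cC} \leq p$.

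Your substitute for that citation has a genuine gap. You propose to bound $\kappa_{\cC}(\sL)$ by the \emph{ordinary} Kodaira--Iitaka dimension of a rank-one reflexive subsheaf of $\Omega^{[p]}_X(\log\lfloor D\rfloor)$, claiming that the twists coming from the fractional divisor $D_{\orb}$ ``do not enlarge the dimension of the image'' of the maps $\varphi_{\Sym^{[m]}_{\cC}\sL}$, or can be ``absorbed into an integral twist''. This comparison is false, not merely delicate --- it is precisely the point where $\cC$-pair theory differs from classical pair theory. Take $X=\bP^1$ and $D = \tfrac{1}{2}(p_1+p_2+p_3+p_4)$, a klt (hence lc) $\cC$-pair. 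Then $\Omega^{[1]}_{(X,D,\Id_X)} = \Omega^1_{\bP^1}(\log\lfloor D\rfloor) = \Omega^1_{\bP^1} \cong \sO_{\bP^1}(-2)$, so \emph{every} rank-one subsheaf of $\Omega^{[1]}_X(\log\lfloor D\rfloor)$ has ordinary Kodaira dimension $-\infty$; yet $\Sym^{[2m]}_{\cC}\Omega^1_{\bP^1} \cong \sO_{\bP^1}(-4m)\otimes\sO_{\bP^1}\bigl(\tsum_i m\,p_i\bigr) \cong \sO_{\bP^1}$, so $\kappa_{\cC}\bigl(\Omega^{[1]}_{(X,D,\Id_X)}\bigr)=0$. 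Hence no bound on ordinary Kodaira dimensions of subsheaves of $\Omega^{[p]}_X(\log\lfloor D\rfloor)$ can control $\kappa_{\cC}$. The attempted repair by twisting $\sL$ with an integral divisor supported on $\supp D$ also fails: in dimension $\geq 2$ the twisted sheaf need no longer embed into any sheaf of log forms, and rounding the boundary up to $\lceil D\rceil$ can destroy log-canonicity, so Graf's theorem does not ``apply verbatim''. The statement you need at the base is exactly the orbifold form of Bogomolov--Sommese vanishing, in which the fractional twists are part of the conclusion rather than an error term --- this is what \cite[Thm.~1.2]{Gra13} provides in Campana's language and what \cite[Thm.~6.14]{orbiAlb1} repackages for $\cC$-pairs. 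With that citation in place of your comparison argument, your proof coincides with the paper's.
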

\begin{proof}
  Apply Proposition~\ref{prop:5-1} in case where $β = \Id_X$ and recall from
  \cite[Thm.~6.14]{orbiAlb1} that if $ℱ_2 ⊆ Ω^{[p]}_{(X, D, \Id_X)}$ is coherent of
  rank one, then $κ_{\cC}(ℱ_2) ≤ p$.
\end{proof}

\begin{defn}[\protect{Bogomolov $G$-sheaf, cf.~\cite[Def.~6.15]{orbiAlb1}}]
  In the setting of Corollary~\ref{cor:5-2}, call $ℱ_1$ a Bogomolov $G$-sheaf if
  the equality $κ_{\cC}(ℱ_1) = p$ holds.
\end{defn}

The existence of Bogomolov $G$-sheaves ties in with the notion of ``special''
$\cC$-pairs.  We refer the reader to \cite[Def.~6.16]{orbiAlb1} for the
definition used here and for references to Campana's original work.

\begin{cor}[Special pairs have no Bogomolov $G$-sheaves]\label{cor:5-4}%
  Let $(X,D)$ be a special $\cC$-pair.  If $γ : \what{X} \twoheadrightarrow X$
  is a Galois cover and if $p$ is any number, then there are no Bogomolov
  $G$-sheaves in $Ω^{[p]}_{(X,D,γ)}$.  \qed
\end{cor}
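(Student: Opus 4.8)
The plan is to argue by contradiction and to reduce the hypothetical existence of a Bogomolov $G$-sheaf on the cover $\what{X}$ to the existence of an ordinary Bogomolov sheaf on $(X,D,\Id_X)$ itself — which is precisely what specialness forbids by definition (\cite[Def.~6.16]{orbiAlb1}). The entire mathematical content sits in Proposition~\ref{prop:5-1}, so this corollary is essentially a formal unwinding of that equivalence together with the upper bound of Corollary~\ref{cor:5-2}.

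Concretely, suppose for contradiction that for some $p$ there exists a Bogomolov $G$-sheaf $ℱ_1 ⊆ Ω^{[p]}_{(X,D,γ)}$, that is, a rank-one $G$-subsheaf with $κ_{\cC}(ℱ_1) = p$. (The case $p = 0$ is trivial, so I take $p ≥ 1$.) First I would pass to the saturation of $ℱ_1$ inside $Ω^{[p]}_{(X,D,γ)}$: since $Ω^{[p]}_{(X,D,γ)}$ carries the natural $G$-action and saturation is canonical, the saturated sheaf is again a reflexive $G$-subsheaf, and the passage does not decrease $κ_{\cC}$; so I may assume $ℱ_1$ is reflexive. Next I would apply Proposition~\ref{prop:5-1} to the chain of covers $\what{X} \xrightarrow{γ} X \xrightarrow{\Id_X} X$, taking $α := γ$ (Galois with group $G$), $β := \Id_X$, and $d := p$. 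The sheaf $ℱ_1$ then witnesses statement \ref{il:5-1-2} of that proposition, so statement \ref{il:5-1-1} furnishes a reflexive rank-one sheaf $ℱ ⊆ Ω^{[p]}_{(X,D,\Id_X)}$ with $κ_{\cC}(ℱ) ≥ p$.

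To finish, I would invoke the Bogomolov–Sommese bound: by Corollary~\ref{cor:5-2} (equivalently \cite[Thm.~6.14]{orbiAlb1}), any rank-one subsheaf of $Ω^{[p]}_{(X,D,\Id_X)}$ satisfies $κ_{\cC} ≤ p$, forcing $κ_{\cC}(ℱ) = p$. Thus $ℱ$ is a genuine Bogomolov sheaf for the pair $(X,D,\Id_X)$, contradicting the hypothesis that $(X,D)$ is special and proving the corollary.

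I do not expect a serious obstacle here, since the heavy lifting is done by Proposition~\ref{prop:5-1}. The only points requiring care are the bookkeeping for the saturation step (verifying that saturating preserves both $G$-invariance and the $\cC$-Kodaira–Iitaka dimension, which is where reflexivity enters) and confirming that the standing hypotheses of Corollary~\ref{cor:5-2} — $X$ compact Kähler and $(X,D)$ log-canonical — are already built into the notion of a special $\cC$-pair, so that the upper bound $κ_{\cC} ≤ p$ is available.
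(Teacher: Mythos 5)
Your proof is correct and is exactly the argument the paper intends: the corollary is stated with a \qed because it follows immediately from Proposition~\ref{prop:5-1} applied with $β = \Id_X$ and $α = γ$ (equivalently, Corollary~\ref{cor:5-2}), together with the fact that specialness in the sense of \cite[Def.~6.16]{orbiAlb1} forbids Bogomolov sheaves in $Ω^{[p]}_{(X,D,\Id_X)}$ and already builds in the compact-Kähler and log-canonical hypotheses. Your extra care with the saturation step (so that Proposition~\ref{prop:5-1}\ref{il:5-1-2} receives a \emph{reflexive} $G$-subsheaf, with $κ_{\cC}$ unchanged since the $\cC$-symmetric powers are defined via saturation anyway) is a detail the paper suppresses, but it is the same route.
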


Note that the definition of ``special'' in \cite[Def.~6.16]{orbiAlb1} implies
that $X$ is compact Kähler and that $(X,D)$ is log-canonical.

\subsection{Linear systems in reflexive $G$-sheaves}
\CounterStep
\approvals{Erwan & yes \\ Frédéric & yes \\ Stefan & yes}

To prepare for the proof of Proposition~\ref{prop:5-1}, the following
Proposition~\ref{prop:5-6} considers a Galois cover $q : X \twoheadrightarrow
Y$, a rank-one $G$-sheaf $ℒ$ on $X$ and compares the rational map $φ_ℒ : X
\dasharrow ℙ^•$ to the rational maps induced by the $G$-invariant
push-forward of the reflexive symmetric products,
\begin{equation}\label{eq:5-5-1}
  φ_{\bigl(q_* \Sym^{[n]} ℒ\bigr)^G} : Y \dasharrow ℙ^•,
  \quad\text{for }n ∈ ℕ.
\end{equation}
To make sense of \eqref{eq:5-5-1}, recall that the sheaves $\bigl(q_* \Sym^{[n]}
ℒ\bigr)^G$ have rank one by construction and are reflexive by
\cite[Lem.~A.4]{GKKP11}.  For consistency with the literature we quote,
Proposition~\ref{prop:5-6} speaks about the reflexive symmetric product of $ℒ$.
Since $ℒ$ has rank one, we have $\Sym^{[n]} ℒ = ℒ^{[⊗ n]}$ and could equally
well speak about the reflexive tensor product.

\begin{prop}\label{prop:5-6}%
  Let $X$ be a compact, normal analytic variety and let $G$ be a finite group
  that acts holomorphically on $X$, with quotient $q : X \twoheadrightarrow
  X/G$.  Let $ℒ$ be a torsion free $G$-sheaf of rank one.  If $h⁰ \bigl( X,\, ℒ
  \bigr) > 0$, then there exists a number $n ∈ ℕ⁺$ such that
  \[
    \dim \img φ_ℒ \le \dim \img φ_{\bigl(q_* \Sym^{[n]} ℒ\bigr)^G}.
  \]
\end{prop}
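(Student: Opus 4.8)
The plan is to reduce the asserted inequality to a comparison of Iitaka dimensions of section rings and their invariants. Set $Y := X/G$ and write $R := \bigoplus_{n ≥ 0} H⁰(X,\, ℒ^{[⊗ n]})$ for the section ring of $ℒ$ (using $\Sym^{[n]} ℒ = ℒ^{[⊗ n]}$ for rank one), on which $G$ acts by graded algebra automorphisms, with invariant subring $R^G$. The left-hand side is controlled by the ordinary Iitaka dimension: since $φ_ℒ$ is attached to the degree-one piece of $R$, one has $\dim \img φ_ℒ ≤ κ(ℒ)$, where $κ(ℒ) = \sup_n \dim \img φ_{ℒ^{[⊗ n]}}$. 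On the other hand, since $q$ is the quotient map one has $H⁰\bigl(Y,\ (q_* ℒ^{[⊗ n]})^G\bigr) = H⁰(X,\, ℒ^{[⊗ n]})^G = (R^G)_n$, so the maps $φ_{(q_* \Sym^{[n]} ℒ)^G}$ on $Y$ are precisely the maps attached to the graded pieces of $R^G$, and $\sup_n \dim \img φ_{(q_* \Sym^{[n]} ℒ)^G}$ is the Iitaka dimension $κ^G$ computed from invariant sections. The proposition follows once I establish $κ^G = κ(ℒ)$ and observe that the supremum defining $κ^G$ is attained by some $n ∈ ℕ⁺$ (it is a supremum of integers bounded by $\dim X$): that $n$ then satisfies $\dim \img φ_{(q_* \Sym^{[n]} ℒ)^G} = κ^G = κ(ℒ) ≥ \dim \img φ_ℒ$.

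The heart of the argument is the equality $κ^G = κ(ℒ)$, which I would derive from the fact that $R$ is integral over $R^G$ because $G$ is finite. Concretely, every homogeneous $r ∈ R_n$ is a root of the monic orbit polynomial $\prod_{g ∈ G}(T - g·r)$, whose coefficients are elementary symmetric functions of the $G$-orbit of $r$ and hence lie in $R^G$. Passing to graded fraction fields, I would show that the field of degree-zero ratios $s/t$ (with $s, t$ of equal degree in $R$) is algebraic over the analogous field for $R^G$: writing $s/t = \bigl(s·\prod_{g ≠ e} g·t\bigr)/N(t)$ with norm $N(t) := \prod_{g ∈ G} g·t ∈ R^G$ puts every ratio over an invariant denominator, after which integrality of the numerator over $R^G$ exhibits $s/t$ as algebraic over the invariant field. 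Equality of transcendence degrees of the two fields is exactly $κ^G = κ(ℒ)$. The hypothesis $h⁰(X, ℒ) > 0$ keeps both sides non-negative: for $0 ≠ s ∈ H⁰(X, ℒ)$ the norm $N(s) = \prod_{g} g·s$ is a nonzero $G$-invariant section of $ℒ^{[⊗ |G|]}$ — a product of nonzero sections of a torsion-free rank-one sheaf on the irreducible variety $X$ is nonzero on the intersection of the dense opens where they are individually nonzero — so $(R^G)_{|G|} ≠ 0$ and the maps on the right are defined for some positive $n$.

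The main obstacle I anticipate is not the algebra above but making the Iitaka-theoretic dictionary precise in the present reflexive, compact analytic setting: namely that for a rank-one reflexive sheaf $\sM$ on a compact normal variety the quantity $\sup_n \dim \img φ_{\sM^{[⊗ n]}}$ agrees with the transcendence degree of the field of degree-zero ratios of its section ring, and that this supremum is attained. I would justify this by passing, via Lemma~\ref{lem:2-12}, to a modification on which the relevant sheaf becomes a genuine line bundle and invoking the classical theory of the Iitaka--Kodaira dimension and its associated rational maps; the $G$-action can be lifted to a common equivariant modification so that the comparison of $R$ and $R^G$ is carried out simultaneously there. Once this correspondence between image dimensions and transcendence degrees is in place, the integrality argument closes the proof.
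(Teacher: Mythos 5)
Your strategy is genuinely different from the paper's and, at its core, viable. The paper argues geometrically and produces one explicit $n$: the linear $G$-action on $L = H⁰(X, ℒ)$ makes $φ_ℒ$ equivariant; one chooses a very ample $ℋ$ on $ℙ(L)/G$, takes $n$ with $q_{ℙ}^*ℋ = 𝒪_{ℙ(L)}(n)$, identifies $H⁰\bigl(ℙ(L)/G, ℋ\bigr) = (\Sym^n L)^G$ with a linear subsystem inside $H⁰\bigl(X/G,\, (q_*\Sym^{[n]}ℒ)^G\bigr)$, and concludes because pre- and post-composition with finite maps preserves image dimensions; no Iitaka-type theory enters at all. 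Your route -- the section ring $R$, its invariant subring $R^G$, the orbit-polynomial/norm integrality and the resulting equality of transcendence degrees of the degree-zero fraction fields -- is a correct engine for a more algebraic proof, and the norm non-vanishing correctly handles non-emptiness of the invariant side; if completed, it even proves more than the statement (the suprema over all $n$ on the two sides agree). The price is that it requires a dictionary between image dimensions and transcendence degrees that the paper's proof never needs.

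The gap sits exactly in that dictionary, and your proposed justification does not cover the half of it that matters most. You formulate the dictionary for reflexive powers $\sM^{[⊗ n]}$ of a \emph{single} rank-one sheaf, to be proved by flattening $\sM$ on a modification and citing classical Iitaka theory. That form of the statement does not apply to the quotient side: the sheaves $(q_*\Sym^{[n]}ℒ)^G$ on $X/G$ are in general \emph{not} the reflexive powers of $(q_*ℒ)^G$ -- they differ along the branch locus whenever the action is not free (already for $ℒ = 𝒪_X$ with a nontrivial $G$-linearisation, invariants of powers of a character are not powers of the invariants). So $R^G$ is merely a graded linear series, and there is no single sheaf on $X/G$ whose ``classical theory on a modification'' you can invoke. (Even on the $X$-side the modification route is delicate, since $(π^*ℒ^{[⊗ n]})/\tor$ and the $n$-th power of $(π^*ℒ)/\tor$ differ along exceptional divisors, so the section ring of the flattened bundle need not be $R$.) The repair stays inside your framework but must be done degree by degree: for any finite-dimensional $V ⊆ H⁰(Y, \sN)$ with $\sN$ torsion free of rank one on a compact normal $Y$, the image of $φ_{V,\sN}$ is projective by Chow's theorem and the coordinate-ratio chart is birational onto its image, so $\dim \img φ_{V,\sN}$ equals the transcendence degree of the subfield of $ℳ(Y)$ generated by the ratios $s/t$ with $s, t ∈ V$; the ratio fields of the graded pieces $(R^G)_n$ form a directed system, because $s/t = (s t^{m-1})/t^m$ pushes degree-$n$ ratios into degree $mn$; and Siegel's theorem bounds all of them by $\dim Y$. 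Hence the union of these fields -- the degree-zero fraction field of $R^G$ -- has transcendence degree equal to $\dim \img φ_{(q_*\Sym^{[n]}ℒ)^G}$ for some finite $n$, which is the attainment you need. With this graded-linear-series dictionary substituted for the single-sheaf one (it also disposes of the $X$-side, with no equivariant modifications required), your integrality argument does close the proof.
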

\begin{proof}
  The proof is tedious but elementary, and might well be known to experts
  working in invariant theory.  We include full details as we are not aware of a
  suitable reference.

  \subsubsection*{Step 1: Setup}

  The group $G$ acts linearly on $L=H⁰\bigl( X,\, ℒ
  \bigr)$ and on its projectivization $ℙ(L)$, in a way
  that makes the map $φ_ℒ$ equivariant.  Consider the quotient $q_ℙ : ℙ (L)
  \twoheadrightarrow ℙ (L)/G$, choose a very ample line bundle $ℋ ∈ \Pic
  \bigl(ℙ(L)/G\bigr)$ and take $n ∈ ℕ$ as the unique number satisfying
  $𝒪_{ℙ(L)}(n) = q^*_ℙ\: ℋ$.  We obtain an identification
  \begin{equation}\label{eq:5-6-1}
     H⁰ \bigl( ℙ(L)/G,\, ℋ \bigr) = H⁰ \bigl( ℙ(L) ,\, 𝒪_{ℙ(L)}(n) \bigr)^G ⊆ H⁰ \bigl( ℙ(L) ,\, 𝒪_{ℙ(L)}(n) \bigr)
  \end{equation}
  and hence a commutative diagram of composable meromorphic maps as follows:
  \begin{equation}\label{eq:5-6-2}
    \begin{tikzcd}[column sep=2cm, row sep=1cm]
      X \ar[r, dashed, "φ_ℒ"] \ar[d, two heads, "q\text{, finite}"'] & ℙ(L) \ar[r, hook, "φ_{𝒪_{ℙ(L)}(n)}\text{, finite}"] \ar[d, two heads, "q_{ℙ}\text{, finite}"] & ℙ \Bigl( H⁰ \bigl( ℙ(L) ,\, 𝒪_{ℙ(L)}(n) \bigr) \Bigr) \ar[d, two heads, dashed, "μ\text{, linear projection}"] \\
      X/G \ar[r, dashed, "(φ_ℒ)^G"'] & ℙ(L)/G \ar[r, hook, "φ_{ℋ}\text{, finite}"'] & ℙ \Bigl( H⁰\bigl( ℙ(L)/G,\, ℋ\bigr)\Bigr).
    \end{tikzcd}
  \end{equation}
  Here,
  \begin{itemize}
  \item the meromorphic map $(φ_ℒ)^G$ is the induced map between the quotients,
    given that $φ_ℒ$ is $G$-equivariant,

  \item the meromorphic map $μ$ is the linear projection induced by
    \eqref{eq:5-6-1} above.
  \end{itemize}

  \subsubsection*{Step 2: Identifications}

  The linear spaces of Diagram~\eqref{eq:5-6-2} come with natural
  identifications and inclusions, which allow expressing some compositions as
  maps induced by incomplete linear systems.  As a variant of \eqref{eq:5-6-1},
  we are interested in the identification
  \begin{align}
    \label{eq:5-6-3} H⁰ \bigl( ℙ(L),\, 𝒪_{ℙ(L)}(n) \bigr) & = \Sym^n L ⊆ H⁰ \bigl( X,\, \Sym^{[n]} ℒ\bigr), \\
    \intertext{and its $G$-invariant version,}
    \nonumber H⁰ \bigl( ℙ(L)/G,\, ℋ \bigr) & = H⁰ \bigl( ℙ(L),\, 𝒪_{ℙ(L)}(n) \bigr)^G && \text{by \eqref{eq:5-6-1}} \\
    \label{eq:5-6-4} & = (\Sym^n L)^G && \text{by \eqref{eq:5-6-3}} \\
    \nonumber & ⊆ H⁰ \bigl( X,\, \Sym^{[n]} ℒ\bigr)^G \\
    \nonumber & = H⁰ \Bigl( X/G,\, q_*\bigl(\Sym^{[n]} ℒ\bigr)^G \Bigr).
  \end{align}
  Observe that \eqref{eq:5-6-4} identifies the composed map $φ_ℋ ◦ (φ_ℒ)^G$ of
  Diagram~\eqref{eq:5-6-2} with the map
  \[
    φ_{\bigl(\Sym^n L\bigr)^G,\: q_*\bigl(\Sym^{[n]} ℒ\bigr)^G} : X/G \dasharrow ℙ \left( H⁰ \Bigl( X/G,\, q_*\bigl(\Sym^{[n]} ℒ\bigr)^G \Bigr) \right).
  \]

  \subsubsection*{Step 3: End of proof}

  In summary, we find
  \begin{align*}
    \dim \img φ_ℒ & = \dim \img φ_{ℋ} ◦ q_{ℙ} ◦ φ_ℒ && \text{Finiteness} \\
    & = \dim \img φ_{ℋ} ◦ (φ_{ℋ})^G ◦ q && \text{Diagram~\eqref{eq:5-6-2}} \\
    & = \dim \img φ_{ℋ} ◦ (φ_{ℋ})^G && \text{Finiteness} \\
    & = \dim \img φ_{\bigl(\Sym^n L\bigr)^G,\: q_*\bigl(\Sym^{[n]} ℒ\bigr)^G} && \text{Step~2} \\
    & \le \dim \img φ_{q_*\bigl(\Sym^{[n]} ℒ\bigr)^G} && \text{Linear subsystem} && \qedhere
  \end{align*}
\end{proof}

\subsection{Proof of Proposition~\ref*{prop:5-1}}
\label{sec:5-4}
\CounterStep
\approvals{Erwan & yes \\ Frédéric & yes \\ Stefan & yes}

We consider the two implications separately.

\subsubsection*{Implication \ref{il:5-1-1} $⇒$ \ref{il:5-1-2}}
\approvals{Erwan & yes \\ Frédéric & yes \\ Stefan & yes}

 Given $ℱ_1 ⊆ Ω^{[p]}_{(X,D,β)}$, set $ℱ_2 := α^{[*]}ℱ_1$ and observe that $ℱ_2$
is indeed a $G$-subsheaf of the $G$-sheaf $Ω^{[p]}_{(X,D,γ)}$.  Next, recall
from \cite[Obs.~4.14]{orbiAlb1} that the standard pull-back of Kähler
differentials extends to inclusions
\[
  α^{[*]} \Sym^{[n]}_{\cC} Ω^{[p]}_{(X,D,β)} ↪ \Sym^{[n]}_{\cC} Ω^{[p]}_{(X,D,γ)},
  \quad \text{for every } n ∈ ℕ.
\]
In particular, we find inclusions
\[
  α^{[*]} \Sym^{[n]}_{\cC} ℱ_1 ↪ \Sym^{[n]}_{\cC} α^{[*]}ℱ_1 = \Sym^{[n]}_{\cC} ℱ_2,
  \quad \text{for every } n ∈ ℕ.
\]
It follows that $κ_{\cC}(ℱ_2) ≥ κ_{\cC}(ℱ_1) ≥ d$, as required.  \qed

\subsubsection*{Implication \ref{il:5-1-2} $⇒$ \ref{il:5-1-1}}
\approvals{Erwan & yes \\ Frédéric & yes \\ Stefan & yes}

Given a $G$-subsheaf $ℱ_2 ⊆ Ω^{[p]}_{(X,D,γ)}$, consider the invariant
push-forward sheaves
\[
  ℰ_n := \bigl(α_* \Sym^{[n]}_{\cC} ℱ_2 \bigr)^G, \text{ for every } n ∈ ℕ.
\]
These sheaves come with inclusions
\begin{align}
  \label{eq:5-7-1} ℰ_n & ⊆ \bigl(α_* \Sym^{[n]}_{\cC} Ω^{[p]}_{(X,D,γ)} \bigr)^G = \Sym^{[n]}_{\cC} Ω^{[p]}_{(X,D,β)} && \text{ for every } n ∈ ℕ \\
  \label{eq:5-7-2} ℰ_n & ⊆ \Sym^{[n]}_{\cC} ℰ_1 && \text{ for every } n ∈ ℕ.
\end{align}
The equality in \eqref{eq:5-7-1} is shown in \cite[Lem.~4.20]{orbiAlb1}.
Inclusion \eqref{eq:5-7-2} follows because $\Sym^{[n]}_{\cC} ℰ_1$ is saturated
inside $\Sym^{[n]}_{\cC} Ω^{[p]}_{(X,D,β)}$ by definition, and because the left
and right side of the inclusion agree over the dense, Zariski open set
\[
  \left(α \bigl(\what{X}_{2, \reg} \bigr) ∩ \what{X}_{1,\reg} ∩ β^{-1} \bigl(X_{\reg}\bigr) \right) ∖ β^{-1} \bigl(\supp D\bigr).
\]
With these preparations at hand and taking $ℱ_1 := ℰ_1$, we find numbers $n, m ∈
ℕ$ such that the following holds.
\begin{align*}
  d & \le κ_{\cC} (ℱ_2) && \text{Assumption} \\
  & = \dim \img φ_{\Sym^{[m]}_{\cC} ℱ_2} && \text{Definition~of } κ_{\cC} \\
  & \le \dim \img φ_{α_* \bigl(\Sym^{[n]}\Sym^{[m]}_{\cC} ℱ_2)\bigr)^G} && \text{Proposition~\ref{prop:5-6}} \\
  & \le \dim \img φ_{α_* \bigl(\Sym^{[m · n]}_{\cC} ℱ_2)\bigr)^G} && \text{\cite[Obs.~4.11]{orbiAlb1}} \\
  & \le \dim \img φ_{\Sym^{[m·n]}_{\cC} ℱ_1} && \text{\eqref{eq:5-7-2}} \\
  & \le κ_{\cC} (ℱ_1) && \text{Definition~of } κ_{\cC} && \qed
\end{align*}

%
%
\svnid{$Id: 06-wedge.tex 130 2026-01-07 16:08:21Z kebekus $}
\selectlanguage{british}

\section{Invariant Bogomolov sheaves defined by strict wedge subspaces}
\subversionInfo
\label{sec:6}
\approvals{Erwan & yes \\ Frédéric & yes \\ Stefan & yes}

Let $X$ be a compact Kähler manifold and let $D$ be a reduced snc divisor on
$X$.  Assume that a finite group $G$ acts on $(X,D)$.  Inspired by constructions
introduced in \cite{zbMATH03743427, Catanese91} to generalize the classic
Castelnuovo-De Franchis theorem, we show how an abundance of logarithmic
one-forms on $X$ can be used to construct $G$-invariant Bogomolov sheaves, even
in cases where no $G$-invariant differential exists.

\subsection{Strict wedge subspaces}
\approvals{Erwan & yes \\ Frédéric & yes \\ Stefan & yes}

Strict wedge subspaces, as defined in \cite{Catanese91}, are the key concept of
the present section.

\begin{defn}[\protect{Strict wedge subspaces, \cite[Sect.~2]{Catanese91}}]\label{def:6-1}%
  Let $X$ be a compact complex manifold and let $D ∈ \Div(X)$ be a reduced snc
  divisor.  Let $k ∈ ℕ$ be a number and $V ⊆ H⁰\bigl( X,\, Ω¹_X(\log D) \bigr)$
  a linear subspace.  We call $V$ a \emph{strict $k$-wedge subspace of $H⁰\bigl(
  X,\, Ω¹_X(\log D) \bigr)$} if the following conditions hold.
  \begin{enumerate}
  \item\label{il:6-1-1} The dimension of $V$ is greater than $k$.  In other
    words, $\dim V > k$.

  \item\label{il:6-1-2} The natural map $Λ^k V → H⁰\bigl( X,\, Ω^k_X(\log D)
    \bigr)$ is injective.
    
  \item\label{il:6-1-3} The natural map $Λ^{k+1} V → H⁰\bigl( X,\,
    Ω^{k+1}_X(\log D) \bigr)$ is identically zero.
  \end{enumerate}
\end{defn}

\begin{rem}[Strict wedge subspaces for special values of $k$]\label{rem:6-2}%
  Assume the setting of Definition~\ref{def:6-1}.
  \begin{enumerate}
  \item\label{il:6-2-1} If $\dim V = k+1$, then every element of $Λ^k V$ is a
    pure wedge, and Condition~\ref{il:6-1-2} can be reformulated by saying that
    no $k$-tuple of linearly independent 1-forms in $V$ wedges to zero.

  \item\label{il:6-2-2} If $\dim X ≤ k$, then Condition~\ref{il:6-1-3} holds
    automatically.
  \end{enumerate}
\end{rem}

\begin{notation}[Sheaves of differentials induced by strict wedge spaces]\label{not:6-3}%
  Assume the setting of Definition~\ref{def:6-1}.  If $V$ is a strict $k$-wedge
  subspace of $H⁰\bigl( X,\, Ω¹_X(\log D) \bigr)$, write
  \[
    \sV := \img \Bigl( 𝒪_X ⊗ V → Ω¹_X(\log D) \Bigr) ⊆ Ω¹_X(\log D)
  \]
  for the sheaf of 1-forms that can locally be written as linear combinations of
  forms in $V$.
\end{notation}

\begin{rem}[Sheaves of differentials induced by strict wedge spaces]\label{rem:6-4}%
  Observe that the sheaf $\sV$ of Notation~\ref{not:6-3} is (generically)
  generated by sections in $V$.  Condition~\ref{il:6-1-2} guarantees that its
  rank equals $k$.
\end{rem}

The following proposition guarantees that strict wedge subspaces exist as soon
as there are sufficiently many one-forms.

\begin{prop}[Existence of strict wedge subspaces]\label{prop:6-5}%
  Let $X$ be a compact complex manifold, let $D ∈ \Div(X)$ be a reduced snc
  divisor, and let $V ⊆ H⁰\bigl( X,\, Ω¹_X(\log D) \bigr)$ be a linear subspace
  of dimension $\dim V > \dim X$.  Then, there exists a number $k ∈ ℕ⁺$ and a
  strict $k$-wedge subspace $V' ⊆ V$ of dimension $\dim V' = k+1$.
\end{prop}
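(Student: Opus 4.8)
The plan is to run an extremal, minimal-counterexample argument on the ``wedge depth'' of $V$. First I would record the easy reductions. Since $\dim V > \dim X =: n$, we must have $n \geq 1$ (if $n = 0$ then $H^0\bigl(X, \Omega^1_X(\log D)\bigr) = 0$ and $V$ could not have positive dimension). Moreover $\Omega^p_X(\log D) = 0$ for $p > n$, so any subspace $V'' \subseteq V$ of dimension $n+1$ (which exists because $\dim V \geq n+1$) has the property that the natural map $\Lambda^{n+1} V'' \to H^0\bigl(X, \Omega^{n+1}_X(\log D)\bigr) = 0$ vanishes identically. Consequently the set
\[
  \mathcal{S} := \bigl\{ m \in \mathbb{N} : \exists\, V' \subseteq V \text{ with } \dim V' = m+1 \text{ and } \Lambda^{m+1} V' \to H^0\bigl(X, \Omega^{m+1}_X(\log D)\bigr) \text{ zero} \bigr\}
\]
is non-empty, and I would set $k := \min \mathcal{S}$ and fix a witnessing subspace $V' \subseteq V$ of dimension $k+1$.

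With this choice, Conditions \ref{il:6-1-1} and \ref{il:6-1-3} of Definition~\ref{def:6-1} hold for free: the first because $\dim V' = k+1 > k$, the third because $V'$ was chosen to witness that $k \in \mathcal{S}$. I would then check that $k \geq 1$, so that $k \in \mathbb{N}^+$ as claimed: if $k = 0$, a witnessing $V'$ would be a one-dimensional space spanned by a nonzero form $\omega$ with the map $\Lambda^1 V' = V' \to H^0\bigl(X, \Omega^1_X(\log D)\bigr)$ zero, i.e.\ $\omega = 0$, a contradiction.

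The heart of the matter is Condition \ref{il:6-1-2}, which I would deduce from minimality of $k$. Suppose for contradiction that $\Lambda^k V' \to H^0\bigl(X, \Omega^k_X(\log D)\bigr)$ is not injective. Because $\dim V' = k+1$, every element of $\Lambda^k V'$ is a pure wedge, so by the reformulation of Remark~\ref{rem:6-2} there exist linearly independent forms $\eta_1, \ldots, \eta_k \in V'$ with $\eta_1 \wedge \cdots \wedge \eta_k = 0$ in $H^0\bigl(X, \Omega^k_X(\log D)\bigr)$. Then $W := \langle \eta_1, \ldots, \eta_k \rangle \subseteq V$ has dimension $k = (k-1)+1$, and the one-dimensional space $\Lambda^k W$ is generated by $\eta_1 \wedge \cdots \wedge \eta_k$, whose image vanishes; hence the top-wedge map of $W$ is zero, so $k-1 \in \mathcal{S}$, contradicting minimality of $k$. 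This forces Condition \ref{il:6-1-2} to hold, and $V'$ is the desired strict $k$-wedge subspace of dimension $k+1$.

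I expect the only delicate point to be the bookkeeping around the reformulation of Condition \ref{il:6-1-2}: one must use that in a $(k+1)$-dimensional space every $k$-vector is decomposable, so that failure of injectivity really produces a single vanishing \emph{pure} wedge (rather than a vanishing linear combination of wedges), which is exactly what Remark~\ref{rem:6-2} supplies. Everything else is a routine extremal argument, and compactness of $X$ enters only through the ambient hypotheses, not the combinatorics.
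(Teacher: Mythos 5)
Your proposal is correct and is essentially the paper's own argument: the paper likewise defines the set of integers $\ell$ for which some $\ell+1$ linearly independent forms in $V$ wedge to zero (non-empty since $\dim V > \dim X$), takes the minimal element $k$ with a witnessing $(k+1)$-dimensional span, and deduces Condition~\ref{il:6-1-2} from minimality via the decomposability reformulation of Remark~\ref{rem:6-2}. Your write-up merely makes two points explicit that the paper leaves implicit, namely the check that $k \geq 1$ and the detailed contradiction argument for injectivity.
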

\begin{proof}
  Consider the following set of natural numbers,
  \begin{multline*}
    K := \Bigl\{ ℓ ∈ ℕ \::\: ∃ \text{ linearly independent elements } σ_1, …, σ_{ℓ+1} ∈ V \\
    \text{ with vanishing $(ℓ+1)$-form, } σ_1 Λ ⋯ Λ σ_{ℓ+1} = 0 ∈ H⁰\bigl(X,\, Ω^{ℓ+1}_X(\log D) \bigr) \Bigr\} ⊂ ℕ.
  \end{multline*}
  Observe that the assumption $\dim V > \dim X$ guarantees that $K$ is not
  empty.  Let $k$ be the minimal element of $K$, let $σ_1, …, σ_{k+1} ∈ V$ be
  linearly independent elements with vanishing $k+1$-form, $σ_1 Λ ⋯ Λ σ_{k+1} =
  0$, and let $V'$ be the linear subspace spanned by the $σ_{•}$,
  \[
    V' := \langle σ_1, …, σ_k \rangle ⊆ V.
  \]
  The definition of $K$ together with Item~\ref{il:6-2-1} of
  Remark~\ref{rem:6-2} guarantees that $V'$ is indeed a strict $k$-wedge
  subspace, as desired.
\end{proof}

\subsection{Linear systems defined by strict wedge subspaces}
\approvals{Erwan & yes \\ Frédéric & yes \\ Stefan & yes}

 If $V$ a strict wedge subspace, the next proposition describes the foliation
induced by the meromorphic map $η_V$ introduced in Section~\ref{sec:3},
effectively bounding its dimension from below.  This result is key to all that
follows.

\begin{prop}[Linear systems defined by strict wedge subspaces]\label{prop:6-6}%
  Let $X$ be a compact Kähler manifold, let $D ∈ \Div(X)$ be a reduced snc
  divisor, and let $V ⊆ H⁰\bigl( X,\, Ω¹_X(\log D) \bigr)$ be a strict $k$-wedge
  subspace.  Write $V ⊆ H⁰\bigl( X,\, \sV\bigr)$, where $\sV ⊆ Ω¹_X(\log D)$ is
  the sheaf introduced in Notation~\ref{not:6-3}, and consider the meromorphic
  map
  \[
    η_V : X \dasharrow ℙ (\img λ_V)
  \]
  of Construction~\ref{cons:3-2}.  Then, differentials in $V$ annihilate the
  foliation $\ker (η_V) ⊆ 𝒯_X$ defined by $η_V$.
\end{prop}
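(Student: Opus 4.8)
The plan is to reduce the statement to a pointwise claim at a general point and then to a positivity statement that is the genuine content. By Definition~\ref{def:2-24}, a form $σ ∈ V$ annihilates $ℱ := \ker(T η_V)$ precisely when the contraction $ℱ → 𝒪_X(D_{\red})$ vanishes; since the target is torsion free, this may be checked on a dense open set. Writing $\sV^{\perp} ⊆ 𝒯_X$ for the annihilator of $\sV$ (the saturated subsheaf of vector fields killed by every section of $\sV$), the assertion that every $σ ∈ V$ annihilates $ℱ$ is therefore equivalent to the inclusion $ℱ ⊆ \sV^{\perp}$ at a general point $x ∈ X$. Because $\rank ℱ = \dim X - \dim \img η_V$ while $\rank \sV^{\perp} = \dim X - k$, and because Remark~\ref{rem:3-4} already gives $\dim \img η_V ≤ \rank \sV = k$, this inclusion is in turn equivalent to the equality $\dim \img η_V = k$, i.e.\ to $\det \sV$ being a Bogomolov sheaf. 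So the proposition is really a bigness statement, and I would aim to prove $\dim \img η_V ≥ k$.

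First I would record the mechanism near a general point $x$, where $\sV$ is locally free of rank $k$ and $η_V$ is a submersion onto its image. Choose $ω_1, …, ω_k ∈ V$ whose values at $x$ form a basis of $\sV_x$; for $τ ∈ V$ write $τ = \sum_i g_i ω_i$ with holomorphic $g_i$ near $x$. Crucially, on a compact Kähler manifold every global logarithmic $1$-form is closed, so $dτ = dω_i = 0$ and hence $\sum_i dg_i Λ ω_i = 0$. The single-replacement wedges $ω_1 Λ ⋯ Λ τ Λ ⋯ Λ ω_k$ lie in $\img λ_V$, so their ratios against $ω_1 Λ ⋯ Λ ω_k$ — which are exactly the functions $g_i$ — are pulled back from coordinates of $η_V$; thus $dg_i(v) = 0$ for every $v ∈ ℱ_x$. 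Contracting the relation $\sum_i dg_i Λ ω_i = 0$ with such a $v$ and using $dg_i(v)=0$ yields $\sum_i ω_i(v)·(dg_i)_x = 0$, where each $(dg_i)_x$ lies in $\sV_x$. Hence $ℱ_x ⊆ \sV^{\perp}_x$ would follow at once if the vectors $(dg_i)_x$, as $τ$ ranges over $V$, spanned $\sV_x$ — equivalently, if the induced map from the $k$-dimensional leaf space were an immersion at $x$.

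The hard part is exactly this spanning/immersivity, and it \emph{cannot} be proved by the local computation alone: one can write down local closed $1$-forms (for instance $du$, $dw$, $d(u^2/w)$ on $ℂ^2$) that satisfy all three conditions of Definition~\ref{def:6-1} yet for which the analogue of $η_V$ contracts a whole curve of directions, so that the conclusion fails. Compactness and Kählerness must therefore enter globally. My plan is to supply the missing positivity through the generalized Castelnuovo–De Franchis theorem of Catanese \cite{Catanese91, zbMATH03743427}, in its logarithmic compact-Kähler form: a strict $k$-wedge subspace $V ⊆ H⁰(X, Ω¹_X(\log D))$ is pulled back from a fibration $f : X \dasharrow Y$ with $\dim Y = k$, along which $\sV$ is the saturated image of $df$. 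Consequently $\det \sV$ is big with $κ(\det \sV) = k$, the map $η_V$ factors through $f$ with generically finite second factor, and $ℱ = \ker(T η_V) = \ker(T f) = \sV^{\perp}$; since the forms in $V$ come from $Y$ they annihilate $\ker(T f)$, which is the assertion. The delicate point to verify in carrying this out is that the log/Kähler hypotheses of Catanese's theorem are met and that its output fibration is genuinely compatible with the sheaf $\sV$ and the map $η_V$ of Construction~\ref{cons:3-2}; granting this, the foliation statement is immediate.
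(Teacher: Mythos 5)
Your reduction of Proposition~\ref{prop:6-6} to the bigness statement $\dim \img \eta_V = k$ is correct and insightful: both directions of the equivalence follow from closedness of logarithmic forms on a compact K\"ahler manifold together with the wedge trick $\sum_i dg_i \wedge \omega_i = 0 \Rightarrow dg_i \in \sV$ at general points, and your non-compact example ($du$, $dw$, $d(u^2/w)$) correctly shows that no local computation can suffice. The gap is the final step, which is the actual content of the proposition. You supply the missing positivity by invoking ``the generalized Castelnuovo--De Franchis theorem of Catanese in its logarithmic compact-K\"ahler form'', but no such statement exists in the references you cite: \cite{Catanese91} treats holomorphic $1$-forms (the case $D = 0$) on compact K\"ahler manifolds, and \cite{zbMATH03743427} treats subvarieties of abelian varieties. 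A logarithmic compact-K\"ahler Castelnuovo--De Franchis theorem is essentially \emph{equivalent} to Proposition~\ref{prop:6-6} itself (the annihilation statement, plus integrability of the resulting foliation, is what produces the fibration and the descent of the forms), so as written your argument is circular --- the ``delicate point'' you defer to verification is the entire theorem.

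For comparison, the paper proves exactly this missing content from scratch, and its route shows what a complete version of your plan would require. It first reduces to $\dim V = k+1$ using the functoriality of Proposition~\ref{prop:3-5} (smaller wedge subspaces give larger foliations), resolves the indeterminacies of $\eta_V$, and writes $\eta_V$ in explicit coordinates via the relation $\sigma_{k+1} = \sum_j \alpha_j \sigma_j$. Then, for each point $s$ in the locus where $\eta_V$ is a proper submersion, it exhibits a \emph{global} form $\tau_s \in V$ vanishing identically on the fibre $X_s$, and applies Proposition~\ref{prop:2-21} --- the genuinely K\"ahler ingredient, proved by integrating the closed form over loops and citing \cite[Thm.~4.5.4]{MR3156076} to see that loops in a fibre span the dual of the space of logarithmic $1$-forms on that fibre --- to conclude that $\tau_s$ annihilates $\ker(T\eta_V)$. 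Finally, linear non-degeneracy of the image inside $\mathbb{P}(\img \lambda_V)$ produces enough forms $\tau_s$ to span $V$. If you wish to keep your architecture (prove $\dim \img \eta_V = k$ first, deduce annihilation afterwards), you must prove the logarithmic Castelnuovo--De Franchis input yourself, e.g.\ by precisely this fibre-restriction argument; the citation alone does not close the gap.
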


\begin{reminder}[Notation used in Proposition~\ref{prop:6-6}]%
  We refer the reader to Definitions~\ref{def:2-22} and \ref{def:2-24} for the
  precise meaning of the conclusion that ``differentials in $V$ annihilate the
  foliation defined by $η_V$.''
\end{reminder}

We prove Proposition~\ref{prop:6-6} in Section~\vref{sec:6-3}.  Before starting
the proof, we draw a number of corollaries that will be instrumental when we
establish Theorem~\ref{thm:1-3} in Section~\ref{sec:7} below.

\begin{cor}[Linear systems defined by strict wedge subspaces]
  In the setup of Proposition~\ref{prop:6-6}, the image of the rational map
  $η_V$ has dimension $\dim \img η_V = k$.
\end{cor}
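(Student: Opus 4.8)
The plan is to squeeze $\dim \img \eta_V$ between $k$ and $k$ by establishing matching upper and lower bounds.

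For the upper bound I would simply invoke Remark~\ref{rem:3-4}. By Remark~\ref{rem:6-4} the sheaf $\sV ⊆ Ω¹_X(\log D)$ has rank $k$ and is generically generated by sections in $V$, so the chain of inequalities of Remark~\ref{rem:3-4} — whose last step is Bogomolov–Sommese vanishing — yields $\dim \img \eta_V ≤ \rank \sV = k$.

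For the lower bound I would combine Proposition~\ref{prop:6-6} with a rank count for the foliation $\sF := \ker (T\eta_V) ⊆ \sT_X$ defined by $\eta_V$. On the one hand, the generic rank of the tangent map of a dominant rational map equals the dimension of its image; hence at a general point $\rank \sF = \dim X - \dim \img \eta_V$. On the other hand, Proposition~\ref{prop:6-6} asserts that every $σ ∈ V$ annihilates $\sF$ in the sense of Definition~\ref{def:2-24}. Restricting to a general point $x$ lying off $\supp D$ and in the locus where $\eta_V$ is a submersion onto its image, annihilation means that the covector $σ(x)$ vanishes on the subspace $\sF_x ⊆ T_x X$, for every $σ ∈ V$. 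Since $\sV$ is generically generated by $V$ and has rank $k$, the covectors $\{σ(x) : σ ∈ V\}$ span the $k$-dimensional fibre $\sV_x ⊆ T^*_x X$ at general $x$, so $\sF_x$ is contained in their common annihilator, a subspace of dimension $\dim X - k$. Therefore $\rank \sF ≤ \dim X - k$, and comparing with the previous computation gives $\dim \img \eta_V ≥ k$. Together with the upper bound this proves $\dim \img \eta_V = k$.

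The step I expect to require the most care is the transition from the sheaf-level statement ``$V$ annihilates $\sF$'' of Proposition~\ref{prop:6-6} to the pointwise linear-algebra inequality $\rank \sF ≤ \dim X - k$: one must pass to a dense open set on which $\eta_V$ is defined and of maximal rank, on which $x ∉ \supp D$ so that $Ω¹_X(\log D)$ restricts to the ordinary cotangent space, and on which the evaluations $σ(x)$ genuinely span $\sV_x$. On any such open set both the identification $\rank \sF = \dim X - \dim \img \eta_V$ and the annihilation estimate hold simultaneously, and the corollary follows.
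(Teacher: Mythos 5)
Your proposal is correct and follows essentially the same route as the paper: the upper bound via Remark~\ref{rem:3-4} (Bogomolov--Sommese), and the lower bound by combining Proposition~\ref{prop:6-6} with a generic rank count. The only cosmetic difference is that the paper phrases the rank count through the annihilator subsheaf $\mathscr{A} \subseteq \Omega^1_X(\log D)$ of the foliation, observing $\mathscr{V} \subseteq \mathscr{A}$ and $\rank \mathscr{A} = \dim \img \eta_V$, whereas you dualize and bound the rank of the foliation $\ker(T\eta_V) \subseteq \mathscr{T}_X$ pointwise --- the same linear algebra read in the tangent rather than the cotangent direction.
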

\begin{proof}
  If
  \[
    𝒜 := \ker \Bigl( Ω¹_X(\log D) → \sHom_{𝒪_X} \bigl( \ker η_V,\, 𝒪_X(D)\bigr) \Bigr) ⊆ Ω¹_X(\log D)
  \]
  is the annihilator of the foliation defined by $η_V$ and if $\sV ⊆ Ω¹_X(\log
  D)$ is the sheaf of differentials introduced in Notation~\ref{not:6-3}, then
  Proposition~\ref{prop:6-6} asserts that $\sV ⊆ 𝒜$.  In particular, we find
  that
  \[
    k = \rank \sV ≤ \rank 𝒜 = \dim \img η_V.
  \]
  The converse is shown in Remark~\ref{rem:3-4}.
\end{proof}

\begin{cor}[Linear systems defined by strict wedge subspaces]\label{cor:6-9}%
  Assume the setup of Proposition~\ref{prop:6-6}.  Then, there exists a dense
  open subset $X° ⊆ X$ where the rational map $η_V$ is well-defined and $\sV =
  \img d η_V$.  \qed
\end{cor}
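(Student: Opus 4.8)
The plan is to treat this as essentially immediate from Proposition~\ref{prop:6-6} together with the preceding corollary (which computes $\dim \img η_V = k$ and introduces the annihilator sheaf $𝒜$), the only real content being a careful choice of the open set and a rank count. First I would pick $X° ⊆ X ∖ \supp D$ to be a dense open set on which three conditions hold at once: $η_V$ restricts to a holomorphic morphism (i.e.\ $X°$ avoids the indeterminacy locus); this morphism has constant rank equal to its generic rank $k := \dim \img η_V$ (i.e.\ $X°$ avoids the rank-drop locus); and the subsheaf $\sV ⊆ Ω¹_X(\log D)$ is locally free. Each failure locus, together with $\supp D$, is a proper analytic subset, so their common complement $X°$ is dense and open. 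Working inside $X ∖ \supp D$ guarantees $Ω¹_{X°}(\log D) = Ω¹_{X°}$, so that comparing $\sV$ with $\img d η_V ⊆ Ω¹_{X°}$ (in the sense of Lemma~\ref{lem:2-7}) is meaningful — note the target $ℙ(\img λ_V)$ carries no boundary, so $\img d η_V$ consists of honest holomorphic forms.

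Next I would identify $\img d η_V$ on $X°$ with the restriction of the annihilator sheaf $𝒜$ from the preceding corollary. By Definition~\ref{def:2-22}, on the well-defined locus $X°$ the foliation satisfies $\ker η_V = \ker T(η_V|_{X°})$. Since $η_V|_{X°}$ has constant rank $k$, the constant-rank theorem presents it locally as a submersion onto a $k$-dimensional image, so $\ker η_V$ is a corank-$k$ subbundle of $𝒯_{X°}$ and the image of the codifferential $η_V^* Ω¹ → Ω¹_{X°}$ is precisely the rank-$k$ subbundle of forms vanishing on $\ker η_V$. That subbundle is, tautologically, the restriction to $X°$ of $𝒜 = \ker\bigl(Ω¹_X(\log D) → \sHom_{𝒪_X}(\ker η_V, 𝒪_X(D))\bigr)$; hence $𝒜|_{X°} = \img d η_V$.

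Finally I would match ranks. Proposition~\ref{prop:6-6} states that every form in $V$ annihilates $\ker η_V$, so $\sV ⊆ 𝒜$, and restricting to $X°$ gives $\sV|_{X°} ⊆ \img d η_V$. Both sheaves have rank $k$ — the former by Remark~\ref{rem:6-4}, the latter by the preceding corollary — so the quotient $\img d η_V / \sV|_{X°}$ is a torsion sheaf; deleting its support (a proper analytic subset) from $X°$ yields the asserted equality $\sV = \img d η_V$. The only genuinely delicate step is the constant-rank reduction in the middle paragraph: one must shrink $X°$ enough that $η_V$ has locally constant fibre dimension, for only then is $\img d η_V$ a bona fide subbundle equal to the annihilator rather than a merely generically-rank-$k$ subsheaf. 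Once that reduction is in place, the identification and the rank count are routine, which is presumably why the statement is recorded with a bare \verb|\qed|.
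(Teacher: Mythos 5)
Your proposal is correct and follows exactly the route the paper leaves implicit behind its bare \qed: combine Proposition~\ref{prop:6-6} (which gives $\sV ⊆ 𝒜$) with the rank computation of the preceding corollary, identify $𝒜|_{X°} = \img d η_V$ on a constant-rank locus via the constant-rank theorem, and conclude equality of two rank-$k$ subsheaves after deleting the torsion support of the quotient. The shrinking steps you flag (indeterminacy locus, rank-drop locus, torsion support, $\supp D$) are precisely the details the authors judged routine enough to omit.
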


\begin{cor}[Sums of sheaves of differentials defined by strict wedge subspaces]
  Let $X$ be a compact Kähler manifold, let $D ∈ \Div(X)$ be a reduced snc
  divisor, and let $V_1, …, V_a ⊆ H⁰\bigl( X,\, Ω¹_X(\log D) \bigr)$ be strict
  wedge subspaces, with associated sheaves $\sV_1, …, \sV_a ⊆ Ω¹_X(\log D)$ of
  differentials.  Write $\sV := \sum_i \sV_i ⊆ Ω¹_X(\log D)$ and consider the
  determinant $\det \sV ⊆ Ω^{\rank \sV}_X(\log D)$.  Then,
  \[
    \rank \sV = \dim \img φ_{\det \sV}.
  \]
  In particular, $\det \sV$ is a Bogomolov sheaf in the sense of
  Remark~\ref{rem:3-4}.
\end{cor}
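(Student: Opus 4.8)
The plan is to pin down the number $\dim \img φ_{\det \sV}$ by trapping it between a lower bound coming from the product of the individual wedge maps $η_{V_i}$ and the universal upper bound of Remark~\ref{rem:3-4}, and then to check that both bounds equal $\rank \sV$. First I would record the bookkeeping. By Remark~\ref{rem:6-4} each $\sV_i$ is generically generated by $V_i$, so $\sV = \sum_i \sV_i$ is generically generated by $F := \sum_i V_i ⊆ H⁰\bigl(X,\, \sV\bigr)$; in particular $\det \sV$ has a non-trivial space of sections and both maps $η_F$ and $φ_{\det \sV}$ of Construction~\ref{cons:3-2} are defined. Remark~\ref{rem:3-4}, applied to $\sV ⊆ Ω¹_X(\log D)$, then supplies the chain of upper estimates
\[
  \dim \img η_F ≤ \dim \img φ_{\det \sV} ≤ κ(\det \sV) ≤ \rank \sV.
\]

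Next I would produce the matching lower bound. Applying functoriality of $η_•$ to the inclusions $\sV_i ⊆ \sV$ (Proposition~\ref{prop:3-5}, packaged as the sum statement of Corollary~\ref{cor:3-8}, whose proof uses only the compact normal setting and hence applies to compact Kähler $X$) yields
\[
  \dim \img η_F ≥ \dim \img \bigl(η_{V_1} ⨯ ⋯ ⨯ η_{V_a}\bigr).
\]
To evaluate the right-hand side, I would pass to a dense open $X° ⊆ X$ on which every $η_{V_i}$ and the product map $φ := η_{V_1} ⨯ ⋯ ⨯ η_{V_a}$ is a morphism. On $X°$, Lemma~\ref{lem:2-7} gives $\img dφ = \sum_i \img d η_{V_i}$, while Corollary~\ref{cor:6-9} identifies $\img d η_{V_i} = \sV_i$; hence $\img dφ = \sum_i \sV_i = \sV$ on $X°$. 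Since the rank of the image subsheaf $\img dφ ⊆ Ω¹_{X°}$ equals the generic rank of the differential of the morphism $φ$, which in turn equals the dimension of $\overline{\img φ}$, I conclude
\[
  \dim \img \bigl(η_{V_1} ⨯ ⋯ ⨯ η_{V_a}\bigr) = \rank \img dφ = \rank \sV.
\]

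Combining the two displays forces equality throughout the chain of Remark~\ref{rem:3-4}: we get $\rank \sV ≤ \dim \img η_F ≤ \dim \img φ_{\det \sV} ≤ κ(\det \sV) ≤ \rank \sV$, whence $\dim \img φ_{\det \sV} = \rank \sV$, which is the asserted equality, together with $κ(\det \sV) = \rank \sV$, i.e.\ $\det \sV$ is a Bogomolov sheaf in the sense of Remark~\ref{rem:3-4}. I expect the only genuinely delicate point to be the identification $\rank(\img dφ) = \dim \overline{\img φ}$, together with the care needed to arrange a single dense open on which all the rational maps in play are simultaneously morphisms; everything else is a formal assembly of the cited results. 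I would also verify explicitly that the projective hypothesis in Corollary~\ref{cor:3-8} can be relaxed to compact Kähler, since its proof invokes only Proposition~\ref{prop:3-5} and Lemma~\ref{lem:2-7}, both of which hold in that generality.
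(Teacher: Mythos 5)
Your proposal is correct and follows essentially the same route as the paper's own proof: both establish the chain $\rank \sV = \rank \img\, d(\eta_{V_1} \times \cdots \times \eta_{V_a}) \leq \dim \img \eta_V \leq \dim \img \phi_{\det \sV} \leq \rank \sV$ by combining Corollary~\ref{cor:6-9}, Lemma~\ref{lem:2-7}, Corollary~\ref{cor:3-8} and Remark~\ref{rem:3-4}. The one point where you go beyond the paper is your explicit check that Corollary~\ref{cor:3-8}, stated there for projective $X$, applies in the compact Kähler setting because its proof only invokes Proposition~\ref{prop:3-5}; the paper cites it without comment.
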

\begin{proof}
  Set $V := \sum_i V_i ⊆ H⁰\bigl( X,\, Ω¹_X(\log D) \bigr)$ and consider the
  rational map
  \[
    \begin{tikzcd}[column sep=2.2cm]
      X \ar[r, dashed, "η \::=\: η_{V_1} ⨯ ⋯ ⨯ η_{V_a}"] & ℙ \Bigl( \img λ_{V_1} \Bigr) ⨯ ⋯ ⨯ ℙ \Bigl( \img λ_{V_a} \Bigr).
  \end{tikzcd}
  \]
  Combining results obtained so far, there exists an open subset $X° ⊆ X$ where
  all maps encountered so far are well-defined and the following chain of
  (in)equalities holds.
  \begin{align*}
    \rank \sV = \textstyle \rank \sum_i \sV_i|_{X°} & = \textstyle \rank \sum_i \img (d η_{V_i}|_{X°}) && \text{Corollary~\ref{cor:6-9}} \\
    & = \rank \img (d η|_{X°}) && \text{Lemma~\ref{lem:2-7}} \\
    & = \dim \img η \\
    & ≤ \dim \img η_V && \text{Corollary~\ref{cor:3-8}} \\
    & ≤ \dim \img φ_{\det \sV} && \text{Remark~\ref{rem:3-4}}
  \end{align*}
  The converse inequality, $\dim \img φ_{\det \sV} ≤ \rank \sV$, holds by
  Remark~\ref{rem:3-4}.
\end{proof}

\begin{cor}[Invariant Bogomolov sheaves defined by strict wedge subspaces]\label{cor:6-11}%
  Let $X$ be a compact Kähler manifold, equipped with a holomorphic action of a
  (possibly infinite) group $G$.  Let $D ∈ \Div(X)$ be a reduced snc divisor
  that is $G$-stable.  Assume that $X$ admits a strict wedge subspace $V ⊆
  H⁰\bigl( X,\, Ω¹_X(\log D) \bigr)$ with associated sheaf $\sV$ of
  differentials, and write $𝒲 := \sum_{g ∈ G} g^* \sV$.  Then,
  \[
    \rank 𝒲 = \dim \img φ_{\det 𝒲}.
  \]
  In particular, $\det \bigl( 𝒲 \bigr) ⊆ Ω^{\rank 𝒲}_X(\log D)$ is a
  $G$-invariant Bogomolov sheaf.
\end{cor}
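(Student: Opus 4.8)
The plan is to reduce the possibly infinite sum $\sW = \sum_{g \in G} g^* \sV$ to a finite subsum, and then invoke the preceding corollary on sums of sheaves of differentials defined by strict wedge subspaces, which already establishes the rank/image equality for any \emph{finite} collection of strict wedge subspaces.

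First I would check that each summand is itself of the expected type. Since $g \in G$ acts as an automorphism of the pair $(X, D)$, it induces a linear automorphism $g^*$ of $H^0\bigl(X, \Omega^1_X(\log D)\bigr)$ that is compatible with all the wedge maps $\Lambda^\bullet V \to H^0\bigl(X, \Omega^\bullet_X(\log D)\bigr)$. Consequently the conditions \ref{il:6-1-1}--\ref{il:6-1-3} of Definition~\ref{def:6-1} are preserved, so that $g^* V$ is again a strict $k$-wedge subspace, with associated sheaf of differentials exactly $g^* \sV$.

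The key step --- and the only place where the possible infinitude of $G$ must be addressed --- is a finiteness reduction. Because $X$ is compact, $H^0\bigl(X, \Omega^1_X(\log D)\bigr)$ is finite-dimensional, so the subspace $W := \sum_{g \in G} g^* V$ is finite-dimensional and equals $\sum_{i=1}^a g_i^* V$ for some finite set $g_1, \dots, g_a \in G$. Recalling from Notation~\ref{not:6-3} that each $g^* \sV$ is precisely the $\mathcal{O}_X$-subsheaf of $\Omega^1_X(\log D)$ generated by the sections in $g^* V$, I would conclude that $\sW$ is the subsheaf generated by $W$, whence the infinite expression collapses to the finite sum
\[
  \sW = \sum_{i=1}^a g_i^* \sV.
\]

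With this reduction in hand, the preceding corollary applied to the strict wedge subspaces $g_1^* V, \dots, g_a^* V$ delivers $\rank \sW = \dim \img \varphi_{\det \sW}$, which is the asserted Bogomolov equality. It then remains only to note $G$-invariance, which holds by construction: for $h \in G$ one has $h^* \sW = \sum_{g \in G} (g \circ h)^* \sV = \sW$, since $g \mapsto g \circ h$ permutes $G$; hence $\det \sW \subseteq \Omega^{\rank \sW}_X(\log D)$ is a $G$-invariant subsheaf, and the rank equality exhibits it as a Bogomolov sheaf in the sense of Remark~\ref{rem:3-4}. I expect the finiteness reduction to be the main (though modest) obstacle; everything else is formal or inherited directly from the finite case.
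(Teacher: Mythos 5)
Your proposal is correct and takes essentially the same route as the paper, whose entire proof is the one-line observation that the Noetherian condition makes $\sum_{g \in G} g^* \sV$ a finite sum, after which the preceding corollary on finite sums of strict-wedge sheaves applies. Your only variation is to derive finiteness from the finite-dimensionality of $H^0\bigl(X,\, \Omega^1_X(\log D)\bigr)$ on the compact manifold $X$ (via $W = \sum_i g_i^* V$ and the fact that a sum of generically generated subsheaves is generated by the sum of the generating spaces) rather than from the ascending chain condition on coherent subsheaves; the remaining points you spell out --- that each $g^* V$ is again a strict wedge subspace with associated sheaf $g^* \sV$, and the $G$-invariance of $\sW$ --- are exactly what the paper leaves implicit.
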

\begin{proof}
  The Noetherian condition guarantees that $\sum_{g ∈ G} g^* \sV$ is in fact a
  finite sum.
\end{proof}

\subsection{Proof of Proposition~\ref*{prop:6-6}}
\label{sec:6-3}
\approvals{Erwan & yes \\ Frédéric & yes \\ Stefan & yes}

For the reader's convenience, we subdivide the proof into steps.

\subsubsection*{Step 0: Simplification and Notation}
\approvals{Erwan & yes \\ Frédéric & yes \\ Stefan & yes}

The definition of ``strict $k$-wedge subspace'' guarantees that every linear
subspace $V' ⊆ V$ is again a strict $k$-wedge subspace, as long as $\dim V' >
k$.  The factorization found in Proposition~\ref{prop:3-5},
\[
  \begin{tikzcd}[column sep=2cm]
    X \ar[r, dashed, "η_V"'] \ar[rr, bend left=15, dashed, "η_{V'}"] & ℙ \Bigl( \img λ_V \Bigr) \ar[r, dashed, two heads, "∃ η_{V,V'}"'] & ℙ \Bigl( \img λ_{V'} \Bigr),
  \end{tikzcd}
\]
equips us with an inclusion of the foliations defined by $η_V$ and $η_{V'}$,
\[
  \ker(η_V) ⊆ \ker(η_{V'}),
\]
which allows us to assume the following.

\begin{asswlog}
  The dimension of $V$ equals $k+1$.
\end{asswlog}

Resolving the indeterminacies of $η_V$ by a suitable blow-up, $π : \wtilde{X} →
X$ and replacing $V ⊆ H⁰\bigl( X,\, Ω¹_X(\log D) \bigr)$ by its pull-back
$(dπ)(V) ⊆ H⁰\bigl( \wtilde{X},\, Ω¹_{\wtilde{X}}(\log π^*D) \bigr)$, we may
assume the following.

\begin{asswlog}
  The meromorphic mapping $η_V$ is in fact holomorphic.
\end{asswlog}

Denote the image variety by $Y := \img η_V ⊆ ℙ(\img λ_V)$ and let $Y° ⊂ Y$ be
the maximal open subset over which the morphism $η_V$ is a proper submersion.

\subsubsection*{Step 1: Coordinates}
\CounterStep
\approvals{Erwan & yes \\ Frédéric & yes \\ Stefan & yes}

We will work with explicit coordinates and choose an ordered basis $σ_1, …,
σ_{k+1} ∈ V$.  A natural ordered basis of $Λ^k V$ is then given as
\[
  (σ_1 Λ ⋯ Λ \underbrace{σ_j}_{\text{delete}} Λ ⋯ Λ σ_{k+1})_{1 ≤ j ≤ k+1} ∈ Λ^k V.
\]
Use this basis to identify $ℙ(\img λ_V) ≅ ℙ(Λ^k V) ≅ ℙ^k$ and use homogeneous
coordinates $[x_1 : ⋯ : x_{k+1}]$ to denote its points.  This choice of
coordinates allows writing $η_V$ in terms of concrete functions.  To make this
statement precise, observe that the forms $σ_1, …, σ_{k+1}$ span the rank{-}$k$
sheaf $\sV$ generically.  Consequently, there exist meromorphic functions $α_1,
…, α_k ∈ ℳ(X)$ such that
\begin{equation}\label{eq:6-14-1}
  σ_{k+1} = \sum_{j=1}^k α_j·σ_j.
\end{equation}
This implies in particular that
\[
  σ_1 Λ ⋯ Λ \underbrace{σ_j}_{\text{delete}} Λ ⋯ Λ σ_{k+1} = (-1)^{k-j}·α_j·σ_1 Λ ⋯ Λ σ_k,
  \quad \text{for every index } j \le k.
\]
On the open set $U ⊆ X$ where the $α_1, …, α_k$ are regular, the morphism $η_V$
is thus described as
\[
  η_V|_U : U → ℙ^k, \quad x ↦ \bigl[(-1)^{k-1}·α_1(x) : ⋯ : (-1)^{k-k}·α_k(x) : 1\bigr].
\]

\subsubsection*{Step 2: Forms on Fibres}
\approvals{Erwan & yes \\ Frédéric & yes \\ Stefan & yes}

Given any point $s ∈ Y° ∖ \{ x_{k+1} = 0\}$, write $s = [y_1 : ⋯ : y_k : 1]$ and
consider the holomorphic differential form
\[
  τ_s := σ_{k+1} - \sum_{i=1}^k (-1)^{k-i}·y_i·σ_i.
\]
Linear independence of $σ_1, …, σ_{k+1}$ guarantees that $τ_s$ does not vanish
identically on $X$.  Equation~\eqref{eq:6-14-1} however guarantees that $τ_s$
vanishes identically along the fibre $X_s := η_V^{-1}(s)$.
Proposition~\ref{prop:2-21} then guarantees that $τ_s$ annihilates the foliation
defined by $η_V$ over the open set $η^{-1}_V(Y°)$ and hence everywhere.

\subsubsection*{Step 3: Non-degeneracy of the image}
\approvals{Erwan & yes \\ Frédéric & yes \\ Stefan & yes}

The construction of $η_V$ guarantees that the image variety $Y$ is \emph{not}
linearly degenerate inside $ℙ^k$.  In other words, $Y$ is not contained in any
linear hyperplane.  If $(s_1, …, s_{k}) ∈ \bigl(Y° ∖ \{ x_{k+1} = 0\}\bigr)^{⨯
k}$ is a general $k$-uple of points, written as $s_i = [y_{i1} : ⋯ : y_{ik} : 1]$,
then non-degeneracy implies that
\[
  (y_{11}, …, y_{1k}, 1), …, (y_{(k)1}, …, y_{(k)k}, 1) ∈ ℂ^{k+1}
\]
are linearly independent vectors.  The holomorphic forms
\[
  τ_{s_i} := σ_{k+1} - \sum_{j=1}^k (-1)^{k-i}·y_{ij}·σ_j
\]
are likewise linearly independent, hence form a basis of $V$, and annihilate the
foliation defined by $η_V$.  \qed

%
%
\svnid{$Id: 07-main.tex 130 2026-01-07 16:08:21Z kebekus $}
\selectlanguage{british}

\section{Bounds on invariants, Proof of Theorem~\ref*{thm:1-3} and Corollary~\ref*{cor:1-7}}
\subversionInfo
\label{sec:7}

\subsection{Proof of Theorem~\ref*{thm:1-3}}
\label{sec:7-1}
\approvals{Erwan & yes \\ Frédéric & yes \\ Stefan & yes}

We prove the contrapositive: assuming that $(X, D)$ is an $n$-dimensional
$\cC$-pair with irregularity $q⁺(X,D) > n$ satisfying the assumptions of
Theorem~\ref*{thm:1-3}, we will show that $(X, D)$ is \emph{not} special.

\subsection*{Step 1: Setup}
\approvals{Erwan & yes \\ Frédéric & yes \\ Stefan & yes}
\CounterStep

The assumption that $q⁺(X,D) > n$ allows choosing a Galois cover $γ : \what{X} →
X$ with irregularity $q(X,D,γ) > n$.  Fix one such cover throughout and denote
its Galois group by $G$.  Consider the reduced divisor $\what{D} := (γ^*
⌊D⌋)_{\red}$ and let $π : \wtilde{X} → \what{X}$ be a $G$-equivariant, strict
log resolution of the pair $\bigl(\what{X}, \what{D} \bigr)$,
\begin{equation}\label{eq:7-1-1}
  \begin{tikzcd}[column sep=2.2cm]
    \wtilde{X} \ar[r, two heads, "π\text{, resolution}"] & \what{X} \ar[r, two heads, "γ\text{, cover}"] & X.
  \end{tikzcd}
\end{equation}
The preimage $π^{-1} \bigl(\supp \what{D} \bigr)$ is then $G$-invariant, of
pure codimension one and has simple normal crossing support.  Let $\wtilde{D} ∈
\Div\bigl(\wtilde{X}\bigr)$ be the associated divisor.  The pair $(\wtilde{X},
\wtilde{D})$ is snc, and the sheaf $Ω¹_{\wtilde{X}}(\log \wtilde{D})$ is a $G$-sheaf.

\begin{claim}\label{claim:7-2}
  There exists an injective sheaf morphism
  \[
    d_{\cC} π : π^{[*]} Ω^{[1]}_{(X,D,γ)} ↪ Ω¹_{\wtilde{X}}(\log \wtilde D)
  \]
  that agrees on the Zariski open set $π^{-1} (\what{X}°)$ with the standard
  pull-back of Kähler differentials, in the sense discussed in
  Explanation~\ref{exp:4-8}.
\end{claim}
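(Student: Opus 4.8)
The plan is to deduce the claim as a direct instance of Proposition~\ref{prop:4-7}, which reformulates the extension property of Definition~\ref{def:4-3} precisely as the existence of an injective morphism of the shape required here. The first step is therefore to recognise the sequence~\eqref{eq:7-1-1} as one of the sequences~\eqref{eq:4-1-1} of Setting~\ref{setting:4-1}. The cover $\gamma$ is in particular a $q$-morphism, and the $G$-equivariant strict log resolution $\pi$ of $\bigl(\what{X},\what{D}\bigr)$ is a log resolution of $\bigl(\what{X},\gamma^*\lfloor D\rfloor\bigr)$, since $\what{D}=(\gamma^*\lfloor D\rfloor)_{\red}$ has the same support as $\gamma^*\lfloor D\rfloor$ and log resolutions depend only on that support. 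For the same reason the divisor $\wtilde{D}$ fixed in the setup satisfies $\supp\wtilde{D}=\pi^{-1}\supp\what{D}=\pi^{-1}\supp(\gamma^*\lfloor D\rfloor)$, so that $\Omega^1_{\wtilde{X}}(\log\wtilde{D})=\Omega^1_{\wtilde{X}}(\log\pi^*\gamma^*\lfloor D\rfloor)$; the target sheaf of the claim thus coincides with the target appearing in Proposition~\ref{prop:4-7} for $p=1$.

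Next I would establish that adapted reflexive $1$-forms on $(X,D)$ extend to log resolutions of singularities, distinguishing the two hypotheses of Theorem~\ref{thm:1-3}. In Case~\ref{il:1-3-2} the pair $(X,D)$ is projective and dlt, so Theorem~\ref{thm:1-6} applies verbatim and supplies the extension property for $p=1$. In Case~\ref{il:1-3-1} the pair is locally uniformizable, and the extension property holds for every $p$, in particular for $p=1$, by the corollary on locally uniformizable pairs that follows Proposition~\ref{prop:4-7} and which in turn relies on the pull-back morphisms of \cite[Sect.~5]{orbiAlb1}.

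With the extension property secured, the concluding step is to invoke Proposition~\ref{prop:4-7} with $p=1$ for the sequence~\eqref{eq:7-1-1}. This yields an injective sheaf morphism $\pi^{[*]}\Omega^{[1]}_{(X,D,\gamma)}\hookrightarrow\Omega^1_{\wtilde{X}}(\log\pi^*\gamma^*\lfloor D\rfloor)=\Omega^1_{\wtilde{X}}(\log\wtilde{D})$ that agrees on $\pi^{-1}(\what{X}°)$ with the standard pull-back of Kähler differentials in the sense of Explanation~\ref{exp:4-8}. This is precisely the assertion of the claim.

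I do not anticipate a genuine obstacle at this stage: once the identification of the log divisor and the verification that~\eqref{eq:7-1-1} is of the form~\eqref{eq:4-1-1} are in place, the statement is a formal consequence of the two extension inputs together with the reformulation of Proposition~\ref{prop:4-7}. The real mathematical content sits in the earlier results, above all in the dlt extension Theorem~\ref{thm:1-6}, whose proof in Section~\ref{sec:4} rests on the criteria of Proposition~\ref{prop:4-12} and Núñez's work.
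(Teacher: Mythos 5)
Your proposal is correct and takes essentially the same route as the paper: the paper's proof also splits into the two cases of Theorem~\ref{thm:1-3}, handling the dlt case exactly as you do via Theorem~\ref{thm:1-6} in the formulation of Proposition~\ref{prop:4-7}. The only cosmetic difference is the locally uniformizable case, where the paper cites \cite[Fact~5.9]{orbiAlb1} directly rather than taking your round trip through the extension property and back via Proposition~\ref{prop:4-7} (the underlying input is the same), and your explicit verification that the sequence~\eqref{eq:7-1-1} is of the form~\eqref{eq:4-1-1}, with $Ω¹_{\wtilde{X}}(\log \wtilde{D}) = Ω¹_{\wtilde{X}}(\log π^*γ^*⌊D⌋)$, is a detail the paper leaves implicit.
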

\begin{proof}[Proof of Claim~\ref{claim:7-2}]
  We consider the alternative assumptions of Theorem~\ref{thm:1-3} separately.
  In case~\ref{il:1-3-1}, where $(X,D)$ is locally uniformizable, this is
  \cite[Fact~5.9]{orbiAlb1}.  In case~\ref{il:1-3-2}, where $X$ is projective and
  $(X,D)$ is dlt, this is Theorem~\ref{thm:1-6}, in the formulation given by
  Proposition~\ref{prop:4-7}.  \qedhere~\mbox{(Claim~\ref{claim:7-2})}
\end{proof}

By minor abuse of notation, we suppress $d_{\cC} π$ from the notation and view
$π^{[*]} Ω^{[1]}_{(X,D,γ)}$ as a $G$-subsheaf of the $G$-sheaf
$Ω¹_{\wtilde{X}}(\log \wtilde D)$.

\subsection*{Step 2: An invariant Bogomolov sheaf on $\wtilde{X}$}
\approvals{Erwan & yes \\ Frédéric & yes \\ Stefan & yes}
\CounterStep

We know by assumption that
\[
  h⁰ \bigl( \wtilde{X},\, π^{[*]} Ω^{[1]}_{(X,D,γ)} \bigr)
  = h⁰ \bigl( \what{X},\, Ω^{[1]}_{(X,D,γ)} \bigr) = q(X,D,γ) > n.
\]
Use Proposition~\ref{prop:6-5} to find a number $p$ and a strict $p$-wedge
subspace
\[
  V ⊆ H⁰\bigl( \wtilde{X},\, π^{[*]} Ω^{[1]}_{(X,D,γ)} \bigr) ⊆ H⁰\bigl( \wtilde{X},\, Ω¹_{\wtilde{X}}(\log \wtilde{D}) \bigr)
\]
with the associated sheaf $\sV ⊆ π^{[*]} Ω^{[1]}_{(X,D,γ)} ⊆
Ω¹_{\wtilde{X}}(\log \wtilde{D})$ of differentials.  Denoting the sum of the
pull-back sheaves by
\[
  𝒲 := \sum_{g ∈ G} g^* \sV ⊆ π^{[*]} Ω^{[1]}_{(X,D,γ)} ⊆ Ω¹_{\wtilde{X}}(\log \wtilde{D}),
\]
Corollary~\ref{cor:6-11} shows that
\begin{equation}\label{eq:7-3-1}
  \det \bigl( 𝒲 \bigr) ⊆ Λ^{[\rank 𝒲]} π^{[*]} Ω^{[1]}_{(X,D,γ)} ⊆ Ω^{\rank 𝒲}_{\wtilde{X}}(\log \wtilde{D})
\end{equation}
satisfies $\rank 𝒲 ≤ \dim \img φ_{\det 𝒲}$ and is hence a $G$-invariant
Bogomolov sheaf.

\subsection*{Step 3: An invariant Bogomolov sheaf on $\what{X}$}
\approvals{Erwan & yes \\ Frédéric & yes \\ Stefan & yes}

We conclude the proof of Theorem~\ref{thm:1-3} by exhibiting the push-forward
$π_* \det 𝒲$ as a $G$-invariant Bogomolov sheaf on $\what{X}$.  The next claim
allows viewing the push-forwards as a sheaf of adapted reflexive differentials.

\begin{claim}\label{claim:7-4}
  There exists a natural inclusion of $G$-sheaves,
  \[
    π_* Λ^{[\rank 𝒲]} π^{[*]} Ω^{[1]}_{(X,D,γ)} ↪ Ω^{[\rank 𝒲]}_{(X,D,γ)}.
  \]
\end{claim}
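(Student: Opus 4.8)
The plan is to realise the asserted inclusion as a containment of two subsheaves of one fixed reflexive sheaf, namely $Ω^{[r]}_{\what{X}}(\log γ^*⌊D⌋)$, where I abbreviate $r := \rank 𝒲$. First I would use the extension morphism $d_{\cC}π$ of Claim~\ref{claim:7-2} to regard $𝒢 := π^{[*]}Ω^{[1]}_{(X,D,γ)}$ as a $G$-subsheaf of $Ω¹_{\wtilde{X}}(\log \wtilde{D})$. Both sheaves have rank $n = \dim X$, and $d_{\cC}π$ is injective, so $𝒢 → Ω¹_{\wtilde{X}}(\log \wtilde{D})$ is generically an isomorphism. Passing to the $r$-th reflexive exterior power therefore gives a generically isomorphic, hence injective, map of reflexive sheaves $Λ^{[r]}𝒢 ↪ Ω^r_{\wtilde{X}}(\log \wtilde{D})$. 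Pushing forward (push-forward is left exact) and composing with Inclusion~\eqref{eq:4-2-2} of Remark~\ref{rem:4-2} yields a $G$-equivariant injection
\[
  π_* Λ^{[r]} π^{[*]} Ω^{[1]}_{(X,D,γ)} ↪ π_* Ω^r_{\wtilde{X}}(\log π^*γ^*⌊D⌋) ⊆ Ω^{[r]}_{\what{X}}(\log γ^*⌊D⌋).
\]

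It then remains to show that the image lands inside the subsheaf $Ω^{[r]}_{(X,D,γ)} ⊆ Ω^{[r]}_{\what{X}}(\log γ^*⌊D⌋)$ of Inclusion~\eqref{eq:4-2-1}. Since both $Ω^{[r]}_{(X,D,γ)}$ and the ambient sheaf are reflexive, this can be checked over a big open subset: if $ι : U ↪ \what{X}$ is the inclusion of a big open set, $𝒜 ⊆ Ω^{[r]}_{\what{X}}(\log γ^*⌊D⌋)$ a subsheaf, and $𝒜|_U ⊆ Ω^{[r]}_{(X,D,γ)}|_U$, then $𝒜 ⊆ ι_* ι^* 𝒜 ⊆ ι_*\bigl(Ω^{[r]}_{(X,D,γ)}|_U\bigr) = Ω^{[r]}_{(X,D,γ)}$ inside the reflexive ambient (so torsion-freeness of $π_* Λ^{[r]}𝒢$ suffices, and its possible non-reflexivity is harmless). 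I would take $U$ to be the locus where $\what{X}$ is smooth and $\what{D}$ has snc support; this is big because $\what{X}$ is normal and the non-snc locus of a divisor on a smooth space has codimension at least two, and $π$ is an isomorphism over $U$ because it is a strict log resolution.

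Over $U$ the identification becomes explicit: as $π$ is an isomorphism there, $d_{\cC}π$ restricts to a map $Ω^{[1]}_{(X,D,γ)}|_U → Ω¹_U(\log γ^*⌊D⌋)$ that agrees with the standard pull-back on the dense open $\what{X}° ∩ U$ of Setting~\ref{setting:4-1}, and hence coincides with the natural Inclusion~\eqref{eq:4-2-1}; consequently $π_* Λ^{[r]}𝒢|_U = Λ^{[r]}\bigl(Ω^{[1]}_{(X,D,γ)}|_U\bigr)$, carried into $Ω^r_U(\log γ^*⌊D⌋)$ by the reflexive wedge of \eqref{eq:4-2-1}. The step I expect to be the real content is the fact that the reflexive wedge product of adapted reflexive $1$-forms is again an adapted reflexive form, i.e. that this image lies in $Ω^{[r]}_{(X,D,γ)}$; this is part of the exterior-algebra structure of adapted reflexive differentials and I would cite \cite[Sect.~4.2]{orbiAlb1} (it is transparent in local uniformising coordinates, where adapted $r$-forms are exactly the wedges of adapted $1$-forms). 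Granting this over $U$ and extending by the reflexivity argument above completes the construction. Finally, $G$-equivariance is automatic, since $π$ is $G$-equivariant and $d_{\cC}π$, the reflexive wedge, the push-forward $π_*$ and the Inclusions~\eqref{eq:4-2-1} and \eqref{eq:4-2-2} are all $G$-equivariant.
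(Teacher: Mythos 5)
Your proposal is correct and takes essentially the same approach as the paper: torsion-freeness of the push-forward, identification over the big open set $(\what{X},\what{D})_{\reg}$ where the strict log resolution is an isomorphism, and reflexivity of $Ω^{[\rank 𝒲]}_{(X,D,γ)}$ to pass from that big open set to all of $\what{X}$. The only real difference is presentational: the paper asserts the identification over $(\what{X},\what{D})_{\reg}$ as an outright equality in one sentence, whereas you embed everything into $Ω^{[\rank 𝒲]}_{\what{X}}(\log γ^*⌊D⌋)$ first and isolate the wedge-product property of adapted reflexive differentials as the key input, citing \cite[Sect.~4.2]{orbiAlb1} for it.
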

\begin{proof}[Proof of Claim~\ref{claim:7-4}]
  The sheaf on the left is the push-forward of a torsion free sheaf, and hence
  itself torsion free.  It agrees with the sheaf on the right over the big open
  set $(\what{X},\what{D})_{\reg} ⊆ \what{X}$ where the strict resolution map
  $π$ is isomorphic.  In other words: denoting the inclusion by $ι :
  (\what{X},\what{D})_{\reg} ↪ \what{X}$, we find an isomorphism
  \[
    \begin{tikzcd}[column sep=1.6cm]
      ι^* π_* Λ^{[\rank 𝒲]} π^{[*]} Ω^{[1]}_{(X,D,γ)} \ar[r, hook, two heads, "\text{isomorphic}"] & ι^* Ω^{[\rank 𝒲]}_{(X,D,γ)}.
    \end{tikzcd}
  \]
  Pushing forward, this gives a sequence of sheaf morphisms,
  \[
    \begin{tikzcd}
      π_* Λ^{[\rank 𝒲]} π^{[*]} Ω^{[1]}_{(X,D,γ)} \ar[r, hook] & ι_* ι^* π_* Λ^{[\rank 𝒲]} π^{[*]} Ω^{[1]}_{(X,D,γ)} \ar[r, hook, two heads] & ι_* ι^* Ω^{[\rank 𝒲]}_{(X,D,γ)},
    \end{tikzcd}
  \]
  where injectivity of the first arrow comes from the fact that $π_* Λ^{[\rank
  𝒲]} π^{[*]} Ω^{[1]}_{(X,D,γ)}$ is torsion free.  To conclude, it suffices to
  note that $Ω^{[\rank 𝒲]}_{(X,D,γ)}$ is reflexive, and hence equal to the last
  term in the sequence, $ι_* ι^* Ω^{[\rank 𝒲]}_{(X,D,γ)}$.
  \qedhere~\mbox{(Claim~\ref{claim:7-4})}
\end{proof}

Inclusion~\eqref{eq:7-3-1} and Claim~\ref{claim:7-4} equip us with an inclusion
of $G$-sheaves,
\[
  π_* \det 𝒲 ⊆ Ω^{[\rank 𝒲]}_{(X,D,γ)}.
\]
The natural isomorphism $H⁰ \bigl( \wtilde{X},\, \det 𝒲 \bigr) = H⁰ \bigl(
\what{X},\, π_* \det 𝒲 \bigr)$ will then show that
\[
  \rank 𝒲 ≤ \dim \img φ_{\det 𝒲} = \dim \img φ_{π_* \det 𝒲}.
\]
The sheaf $π_* \det 𝒲$ is thus a $G$-invariant Bogomolov sheaf, and the claim
follows from Corollary~\ref{cor:5-4}.  This finishes the proof of
Theorem~\ref{thm:1-3}.  \qed

\subsection{Proof of Corollary~\ref*{cor:1-7}}
\approvals{Erwan & yes \\ Frédéric & yes \\ Stefan & yes}

Let $(X, D)$ be a projective $\cC$-pair that is dlt.  Assuming the existence of
a cover $γ : \what{X} \twoheadrightarrow X$ and a rank-one sheaf $ℒ ⊆
Ω^{[1]}_{(X,D,γ)}$ of positive $\cC$-Kodaira-Iitaka dimension, $κ_{\cC}(ℒ) > 0$,
we need to show that $(X,D)$ is \emph{not} special.  The proof is largely
parallel to the proof of Theorem~\ref{thm:1-3} given in the previous
Section~\ref{sec:7-1}.

\subsection*{Step 0: Simplifying assumptions}
\approvals{Erwan & yes \\ Frédéric & yes \\ Stefan & yes}
\CounterStep

Given a sequence of covers,
\begin{equation}\label{eq:7-5-1}
  \begin{tikzcd}[column sep=2.4cm]
    \wcheck{X} \ar[r, two heads, "β\text{, Galois cover}"'] \ar[rr, bend left=15, two heads, "α\text{, cover}"] & \what{X} \ar[r, two heads, "γ\text{, cover}"'] & X,
  \end{tikzcd}
\end{equation}
Proposition~\ref{prop:5-1} equips us with rank-one, reflexive subsheaf
$\wcheck{ℒ} ⊆ Ω^{[1]}_{(X,D,γ◦β)}$ on $\wcheck{X}$ with positive
$\cC$-Kodaira-Iitaka dimension, $κ_{\cC}(\wcheck{ℒ}) ≥ κ_{\cC}(ℒ) > 0$.  We will
use this observation to replace the cover $γ$ by higher covers with additional
properties.

\subsection*{Step 0.1: The cover is adapted}
\approvals{Erwan & yes \\ Frédéric & yes \\ Stefan & yes}

The projectivity assumption and \cite[Lem.~2.36]{orbiAlb1} imply that $X$ admits
an adapted cover.  An elementary fibre product construction will then allow us
to produce a Sequence~\eqref{eq:7-5-1} of covers where $α$ is itself adapted.

\begin{asswlog}\label{asswlog:7-6}%
  The cover $γ$ is adapted and Galois, with Group $G$.
\end{asswlog}

\subsection*{Step 0.2: The sheaf $ℒ$ admits sections}
\approvals{Erwan & yes \\ Frédéric & yes \\ Stefan & yes}
\CounterStep

The assumption on the positivity of the $\cC$-Kodaira-Iitaka dimension,
$κ_{\cC}(ℒ) > 0$, equips us with number $m > 0$ and two linearly independent
sections
\begin{equation}\label{eq:7-7-1}
  σ_0, σ_1 ∈ H⁰ \bigl( \what{X},\, \Sym^{[m]}_{\cC} ℒ \bigr).
\end{equation}
The right side of \eqref{eq:7-7-1} simplifies because of
Assumption~\ref{asswlog:7-6}.  To be precise, recall from
\cite[Obs.~4.12]{orbiAlb1} that
\[
  \Sym^{[m]}_{\cC} ℒ = ℒ^{[⊗ m]}
  \quad \text{so that} \quad
  σ_0, σ_1 ∈ H⁰ \bigl( \what{X},\, ℒ^{[⊗ m]} \bigr).
\]
Standard covering constructions (``taking the $m$-th root out of a section'')
produce a Sequence~\eqref{eq:7-5-1} of covers and sections
\[
  τ_0, τ_1 ∈ H⁰ \bigl( \wcheck{X},\, β^{[*]} ℒ \bigr)
\]
whose $m$-th tensor powers agree over a dense open set with the pullbacks of
$σ_0$ and $σ_1$.  Recalling from \cite[Obs.~4.8]{orbiAlb1} that the sheaf
$β^{[*]} ℒ$ injects into $Ω^{[1]}_{(X,D,γ◦β)}$, we may replace $γ$ by $α$ and
make the following assumption.

\begin{asswlog}\label{asswlog:7-8}%
  The sheaf $ℒ$ admits two linearly independent sections.
\end{asswlog}

\subsection*{Step 1: Setup}
\approvals{Erwan & yes \\ Frédéric & yes \\ Stefan & yes}

Consider the reduced divisor $\what{D} := (γ^* ⌊D⌋)_{\red}$ and let $π :
\wtilde{X} → \what{X}$ be a $G$-equivariant, strict log resolution of the pair
$\bigl(\what{X}, \what{D} \bigr)$, as in \eqref{eq:7-1-1} above.  Taking
$\wtilde{D} ∈ \Div\bigl(\wtilde{X}\bigr)$ as the divisor supported on $π^{-1}
\bigl(\supp \what{D} \bigr)$, we find the following.

\begin{claim}[Analogue of Claim~\ref{claim:7-2}]\label{claim:7-9}%
  There exists an injective sheaf morphism
  \[
    d_{\cC} π : π^{[*]} Ω^{[1]}_{(X,D,γ)} ↪ Ω¹_{\wtilde{X}}(\log \wtilde D)
  \]
  that agrees on the Zariski open set $π^{-1} (\what{X}°)$ with the standard
  pull-back of Kähler differentials, in the sense discussed in
  Explanation~\ref{exp:4-8}.  \qed
\end{claim}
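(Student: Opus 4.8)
The plan is to recognise that Claim~\ref{claim:7-9} is precisely the \textsc{dlt} special case of Claim~\ref{claim:7-2}, and to argue exactly as there. Since the standing hypothesis of Corollary~\ref{cor:1-7} is that $(X,D)$ is a \emph{projective, dlt} $\cC$-pair, we are in the situation of case~\ref{il:1-3-2} of Theorem~\ref{thm:1-3}. I would therefore begin by invoking Theorem~\ref{thm:1-6}, which asserts that adapted reflexive $1$-forms on the quasi-projective dlt pair $(X,D)$ extend to log resolutions of singularities in the sense of Definition~\ref{def:4-3}. The entire analytic content of the claim is packaged into that theorem; what remains is purely a matter of translating the extension property into the pull-back formulation and checking that the diagram at hand has the required shape.

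Next I would verify that the sequence $\wtilde{X} \xrightarrow{π} \what{X} \xrightarrow{γ} X$ set up in Step~1 (see \eqref{eq:7-1-1}) is indeed a sequence of the form \eqref{eq:4-1-1} of Setting~\ref{setting:4-1}. The morphism $γ$ is a cover, hence a quasi-finite morphism between normal analytic varieties of equal dimension, i.e.\ a $q$-morphism. The morphism $π$ is a $G$-equivariant strict log resolution of $\bigl(\what{X}, \what{D}\bigr)$ with $\what{D} := (γ^* ⌊D⌋)_{\red}$; since $\what{D}$ and $γ^* ⌊D⌋$ share the same support, $π$ is in particular a log resolution of $\bigl(\what{X}, γ^* ⌊D⌋\bigr)$, as required. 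With this identification in place, I would apply Proposition~\ref{prop:4-7}: the extension property furnished by Theorem~\ref{thm:1-6} is there shown to be equivalent to the existence, for every such sequence, of an injective sheaf morphism $d_{\cC} π : π^{[*]} Ω^{[1]}_{(X,D,γ)} ↪ Ω¹_{\wtilde{X}}(\log π^*γ^* ⌊D⌋)$ agreeing with the standard pull-back of Kähler differentials over $π^{-1}(\what{X}°)$, in the sense of Explanation~\ref{exp:4-8}. Finally I would note that $Ω¹_{\wtilde{X}}(\log \wtilde{D}) = Ω¹_{\wtilde{X}}(\log π^*γ^* ⌊D⌋)$, because the sheaf of logarithmic differentials depends only on the support of its boundary divisor, and $\wtilde{D}$ is by definition the reduced divisor supported on $π^{-1}(\supp \what{D})$; this yields the morphism in exactly the stated target.

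I do not expect a genuine obstacle here, as the heavy lifting is entirely delegated to Theorem~\ref{thm:1-6} and its reformulation in Proposition~\ref{prop:4-7}. The only points demanding minor care are bookkeeping ones: confirming that the \emph{strict} $G$-equivariant resolution $π$ qualifies as a log resolution in the sense of Setting~\ref{setting:4-1} despite the passage from $γ^* ⌊D⌋$ to its reduction $\what{D}$, and tracking the reduced-versus-nonreduced distinction in the logarithmic boundary so that the source and target sheaves match those appearing in Proposition~\ref{prop:4-7}. Both are immediate from the support-invariance of $Ω¹(\log -)$. I would close by remarking that, exactly as in Claim~\ref{claim:7-2}, the constructed morphism $d_{\cC} π$ is automatically $G$-equivariant because $π$ is chosen $G$-equivariantly and the standard pull-back it extends is canonical, so that $π^{[*]} Ω^{[1]}_{(X,D,γ)}$ may be regarded as a $G$-subsheaf of the $G$-sheaf $Ω¹_{\wtilde{X}}(\log \wtilde{D})$, which is what the subsequent steps require.
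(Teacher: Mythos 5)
Your proposal is correct and follows exactly the route the paper intends: the paper disposes of Claim~\ref{claim:7-9} with an immediate \qed precisely because, as in the dlt case of Claim~\ref{claim:7-2}, it is Theorem~\ref{thm:1-6} in the formulation of Proposition~\ref{prop:4-7}, applied to the sequence \eqref{eq:7-1-1}. Your additional bookkeeping --- that $γ$ is a $q$-morphism, that the strict log resolution of $\bigl(\what{X}, \what{D}\bigr)$ is a log resolution of $\bigl(\what{X}, γ^*⌊D⌋\bigr)$ since the two divisors share a support, and that $Ω¹_{\wtilde{X}}(\log \wtilde{D}) = Ω¹_{\wtilde{X}}(\log π^*γ^*⌊D⌋)$ --- merely makes explicit what the paper leaves implicit, and is accurate.
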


Suppress $d_{\cC} π$ from the notation and view $π^{[*]} Ω^{[1]}_{(X,D,γ)}$ as a
$G$-subsheaf of the $G$-sheaf $Ω¹_{\wtilde{X}}(\log \wtilde D)$.

\subsection*{Step 2: An invariant Bogomolov sheaf on $\wtilde{X}$}
\approvals{Erwan & yes \\ Frédéric & yes \\ Stefan & yes}

Use Assumption~\ref{asswlog:7-8} to define a strict $1$-wedge subspace
\[
  V := \langle τ_0, τ_1 \rangle
  ⊆ H⁰ \bigl( \what{X},\, Ω^{[1]}_{(X,D,γ)} \bigr)
  ⊆ H⁰\bigl( \wtilde{X},\, π^{[*]} Ω^{[1]}_{(X,D,γ)} \bigr)
  \overset{\text{Claim~\ref{claim:7-9}}}{⊆} H⁰\bigl( \wtilde{X},\, Ω¹_{\wtilde{X}}(\log \wtilde{D}) \bigr)
\]
with the associated sheaf $\sV ⊆ π^{[*]} Ω^{[1]}_{(X,D,γ)} ⊆
Ω¹_{\wtilde{X}}(\log \wtilde{D})$ of differentials.  Denoting the sum of the
pull-back sheaves by
\[
  𝒲 := \sum_{g ∈ G} g^* \sV ⊆ π^{[*]} Ω^{[1]}_{(X,D,γ)} ⊆ Ω¹_{\wtilde{X}}(\log \wtilde{D}),
\]
Corollary~\ref{cor:6-11} shows that $\det \bigl( 𝒲 \bigr) ⊆ Ω^{\rank
𝒲}_{\wtilde{X}}(\log \wtilde{D})$ satisfies $\rank 𝒲 ≤ \dim \img φ_{\det 𝒲}$
and is hence a $G$-invariant Bogomolov sheaf.

\subsection*{Step 3: An invariant Bogomolov sheaf on $\what{X}$}
\approvals{Erwan & yes \\ Frédéric & yes \\ Stefan & yes}

In analogy with the proof of Theorem~\ref{thm:1-3}, observe that there exists a
natural inclusion of $G$-sheaves, $π_* \det 𝒲 ⊆ Ω^{[\rank 𝒲]}_{(X,D,γ)}$, that
exhibits $π_* \det 𝒲$ is as a $G$-Bogomolov sheaf.  This finishes the proof of
Corollary~\ref{cor:1-7}.  \qed


\appendix
\part*{Appendix}

%
%
\svnid{$Id: 0A-smallextension.tex 134 2026-01-09 08:49:53Z kebekus $}
\selectlanguage{british}

\section{Extension of low degree differentials}
\subversionInfo
\label{sec:A}
\approvals{Erwan & yes \\ Frédéric & yes \\ Stefan & yes}

Let $X$ be a normal analytic variety and let $π : \wtilde{X} → X$ be a
resolution of singularities.  If the singular set of $X$ is small, $\codim_X
X_{\sing} ≥ p + 2$ for one $p ∈ ℕ$, then Flenner has shown in \cite{Flenner88}
that $p$-forms extend from the smooth locus of $X$ to $p$-forms on $\wtilde{X}$:
The natural restriction map
\[
  H⁰ \bigl ( \wtilde{X},\, Ω^p_{\wtilde{X}} \bigr) ↪ H⁰ \bigl ( π^{-1}(X_{\reg}),\, Ω^p_{\wtilde{X}} \bigr) = H⁰ \bigl( X_{\reg},\, Ω^p_X \bigr)
\]
is isomorphic.  Writing $ι : X_{\reg} ↪ X$ for the inclusion map, Flenner's
result can equivalently be stated by saying the natural inclusion
\[
  π_* Ω^p_{\wtilde{X}} → ι_* ι^* π_* Ω^p_{\wtilde{X}}
\]
is isomorphic.  The proof builds on earlier work \cite{SS85} of Steenbrink and
van~Straten.  It relies on relative duality for cohomology with supports and
Hodge-theoretic methods.

\subsection{Main Result}
\approvals{Erwan & yes \\ Frédéric & yes \\ Stefan & yes}

The present section extends Flenner's result to forms with logarithmic poles and
replaces $X_{\sing}$ with arbitrary subvarieties $Z ⊆ X$: If $Z$ is small,
$\codim_X Z ≥ p + 2$ for one $p ∈ ℕ$, then $p$-forms extend from $\wtilde{X} ∖
π^{-1}(Z)$ to $p$-forms on $\wtilde{X}$.

\begin{thm}[Extension of low degree differentials]\label{thm:A-1}%
  Let $(X, D)$ be a logarithmic pair, let $π : \wtilde{X} → X$ be a log
  resolution of singularities, and let $\wtilde{D} ∈ \Div(\wtilde{X})$ be the
  reduced divisor supported on $π^{-1} D$.  Let $p ∈ ℕ$ be any number.  If $Z ⊂
  X$ is a Zariski closed subset of $\codim_X Z ≥ p+2$, then the natural
  restriction map
  \begin{equation}\label{eq:A-1-1}
    H⁰ \left( \wtilde{X},\, Ω^p_{\wtilde{X}} \bigl( \log \wtilde{D} \bigr) \right)
    ↪ H⁰ \left( \wtilde{X} ∖ π^{-1}(Z),\, Ω^p_{\wtilde{X}}\bigl(\log \wtilde{D}\bigr) \right)
  \end{equation}
  is isomorphic.
\end{thm}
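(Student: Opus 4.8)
The plan is to prove injectivity directly and to reduce surjectivity to a \emph{depth estimate on $X$ itself}, where the high codimension of $Z$ can finally be exploited. Injectivity of \eqref{eq:A-1-1} is immediate: $\wtilde X$ is smooth and irreducible, $\wtilde X \setminus \pi^{-1}(Z)$ is a dense open subset, and $Ω^p_{\wtilde X}(\log \wtilde D)$ is locally free, hence torsion free, so no nonzero global section can restrict to zero. For surjectivity I would push the problem down to $X$. Writing $\mathcal G := \pi_* Ω^p_{\wtilde X}(\log \wtilde D)$, the identities $H⁰(\wtilde X, Ω^p_{\wtilde X}(\log \wtilde D)) = H⁰(X, \mathcal G)$ and $H⁰(\wtilde X \setminus \pi^{-1}(Z), Ω^p_{\wtilde X}(\log \wtilde D)) = H⁰(X \setminus Z, \mathcal G)$ — the latter because $\pi^{-1}(X \setminus Z) = \wtilde X \setminus \pi^{-1}(Z)$ and push-forward commutes with restriction to open sets — turn \eqref{eq:A-1-1} into the restriction map $H⁰(X, \mathcal G) \to H⁰(X \setminus Z, \mathcal G)$. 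Writing $j : X \setminus Z \hookrightarrow X$ for the open inclusion, the vanishing of the local cohomology sheaves $\mathcal H^0_Z(\mathcal G)$ and $\mathcal H^1_Z(\mathcal G)$ identifies $\mathcal G$ with $j_*(\mathcal G|_{X \setminus Z})$ and hence forces the desired isomorphism on global sections; equivalently, it suffices that $\operatorname{depth}_{I_{Z,x}} \mathcal G_x \ge 2$ for every $x ∈ Z$.

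The heart of the matter is therefore the depth estimate
\[
  \operatorname{depth}_{I_{Z,x}} \bigl( \pi_* Ω^p_{\wtilde X}(\log \wtilde D) \bigr)_x \ge \codim_x Z - p \qquad \text{for all } x ∈ Z,
\]
whose right-hand side is at least two under the hypothesis $\codim_X Z \ge p+2$. This is precisely the logarithmic analogue of Flenner's theorem, with $X_{\sing}$ replaced by an arbitrary subvariety, and I would prove it by the duality method of Steenbrink--van~Straten and Flenner. The idea is to translate the depth statement, via Grothendieck local duality on $X$, into the vanishing of certain sheaves $R^j\pi_*(\,\cdot\,)$ in a range of degrees: the wedge pairing $Ω^p_{\wtilde X}(\log \wtilde D) ⊗ Ω^{n-p}_{\wtilde X}(\log \wtilde D) \to Ω^n_{\wtilde X}(\log \wtilde D) = \omega_{\wtilde X}(\wtilde D)$ (with $n = \dim X$) lets one apply relative Grothendieck duality for the proper map $\pi$ and reduce to the behaviour of $R\pi_* Ω^{n-p}_{\wtilde X}(\log \wtilde D)$. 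The essential positivity input is the logarithmic Grauert--Riemenschneider vanishing $R^j\pi_* Ω^n_{\wtilde X}(\log \wtilde D) = 0$ for $j > 0$, which feeds a dimension count governed by $\codim_X Z$ and yields the required vanishing range.

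A secondary, more elementary reduction that I would keep in reserve uses the residue (weight) filtration of $Ω^p_{\wtilde X}(\log \wtilde D)$, whose graded pieces are push-forwards of $Ω^{p-k}$ from the smooth strata $\wtilde D^{(k)}$ cut out by $k$-fold intersections of the components of $\wtilde D$; since $\codim_X Z \ge p+2$ forces $\codim_{X_k}(Z ∩ X_k) \ge (p-k)+2$ on each image stratum $X_k := \pi(\wtilde D^{(k)})$, one could hope to bootstrap the non-logarithmic case degree by degree. I expect the main obstacle to lie exactly here and in the duality step: the residue reduction is clean only when $\pi$ restricts to a generically finite map on each stratum, which fails for components of $\wtilde D$ that are exceptional over $X$, and the duality argument must be carried out in the possibly non-projective, merely analytic setting in which the paper works, so that both the local-duality formalism and Grauert--Riemenschneider vanishing have to be invoked in their complex-analytic forms. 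Making the Hodge-theoretic input of Steenbrink--van~Straten precise with logarithmic poles and an arbitrary subvariety $Z$ is the step requiring the most care.
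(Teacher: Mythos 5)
Your overall framework is sound and, in fact, matches one half of the paper's strategy: injectivity is trivial, push-forward commutes with restriction to $X\setminus Z$, and the extension statement is equivalent to the vanishing of $\mathcal{H}^0_Z$ and $\mathcal{H}^1_Z$ of $\mathcal{G}=\pi_*\Omega^p_{\widetilde{X}}(\log\widetilde{D})$, i.e.\ to a depth bound along $Z$; this local-cohomology formulation (going back to Kebekus--Schnell and N\'u\~nez) is also how the paper organizes the problem. The genuine gap is in the step you call the heart of the matter: the ``essential positivity input'' you invoke, logarithmic Grauert--Riemenschneider vanishing $R^j\pi_*\Omega^n_{\widetilde{X}}(\log\widetilde{D})=0$ for $j>0$, is \emph{false} for the divisor $\widetilde{D}$ of the theorem, precisely because $\widetilde{D}$ contains $\pi$-exceptional components. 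Concretely, let $S=E_1\times E_2$ be a product of elliptic curves, let $X$ be the cone over $S$ (with respect to a sufficiently positive polarization), let $D\subset X$ be the cone over the fibre $C:=\{0\}\times E_2$, and let $\pi:\widetilde{X}\to X$ be the blow-up of the vertex $v$, with exceptional divisor $E\cong S$. Then $\pi$ is a log resolution of $(X,D)$ and $\widetilde{D}=D'+E$, where $D'$ is the strict transform of $D$. Adjunction gives $\omega_{\widetilde{X}}(\widetilde{D})|_E\cong\omega_S(C)$, so there is an exact sequence
\[
  R^1\pi_*\omega_{\widetilde{X}}(\widetilde{D})
  \longrightarrow H^1\bigl(S,\,\omega_S(C)\bigr)\otimes\mathbb{C}_v
  \longrightarrow R^2\pi_*\omega_{\widetilde{X}}(D').
\]
The right-hand term vanishes (classical Grauert--Riemenschneider for $\omega_{\widetilde{X}}$, together with the fact that $\pi|_{D'}$ has fibres of dimension at most one), while K\"unneth gives $H^1(S,\omega_S(C))\cong H^0\bigl(E_1,\mathcal{O}_{E_1}(0_{E_1})\bigr)\otimes H^1(E_2,\mathcal{O}_{E_2})\cong\mathbb{C}$, using $\omega_S\cong\mathcal{O}_S$ and $\mathcal{O}_S(C)\cong p_1^*\mathcal{O}_{E_1}(0_{E_1})$. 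Hence $R^1\pi_*\Omega^3_{\widetilde{X}}(\log\widetilde{D})\neq 0$. Note that $v\in\supp D$ and $\codim_X\{v\}=3=p+2$ for $p=1$: the failure occurs exactly in the regime $Z\subseteq\supp D$ that remains after the reduction to the non-logarithmic case, so it is not a removable corner case.

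This failure is not an artifact of your formulation. What Flenner's duality method really needs is the Hodge-theoretic vanishing of Steenbrink--van Straten (dually, $R^j\pi_*\Omega^q_{\widetilde{X}}=0$ for $j+q>n$), and the example above shows that its naive logarithmic analogue breaks whenever an exceptional component of $\widetilde{D}$ has irregular strata --- which is the same obstruction you correctly identify for your ``secondary'' residue-filtration reduction, and which you then defer as ``the step requiring the most care.'' That deferred step is the whole theorem; as written, your proposal proves nothing beyond the formal reduction. The paper proceeds quite differently and avoids duality altogether: it first uses N\'u\~nez's non-logarithmic result to reduce to $Z\subseteq\supp D$, then extends $\sigma^\circ$ meromorphically with a \emph{minimal} polar divisor $G$, invokes Graf's observation that $G\neq 0$ would force $\mathcal{O}_{\widetilde{X}}(-G)$ to be big on the general fibre $F$ of $\pi|_P$ for some component $P\subseteq\supp G$, and finally derives a contradiction between $\kappa\bigl(\mathcal{O}_{\widetilde{X}}(-G)|_F\bigr)=\dim F\geq p+1$ and the Bogomolov--Sommese bound $\kappa\leq p$ obtained from the residue sequence on $P$. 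This ``numerical'' route (Kodaira dimension versus fibre dimension) was designed exactly to bypass the duality and Hodge-theoretic inputs that your approach would have to repair.
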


\begin{cor}[Extension of low degree differentials]\label{cor:A-2}%
  In the setting of Theorem~\ref{thm:A-1}, write $ι : X ∖ Z → X$ for the
  inclusion map.  Then, natural morphism
  \[
    π_* Ω^p_{\wtilde{X}}(\log \wtilde{D}) → ι_* ι^* π_* Ω^p_{\wtilde{X}}(\log \wtilde{D})
  \]
  is isomorphic.  \qed
\end{cor}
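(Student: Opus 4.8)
The plan is to trade the global extension statement for a local-cohomology (depth) property of a coherent pushforward sheaf on $X$, and then to establish that property by a relative-duality argument in the spirit of Flenner and of Steenbrink–van~Straten. Throughout, write $\mathcal{F} := Ω^p_{\wtilde{X}}(\log \wtilde{D})$, which is locally free on the smooth variety $\wtilde{X}$, set $W := π^{-1}(Z)$, and let $\mathcal{G} := π_* \mathcal{F}$, a coherent sheaf on $X$. Injectivity of \eqref{eq:A-1-1} is immediate, since $\wtilde{X} ∖ W$ is dense and $\mathcal{F}$ is torsion free. For the reformulation, $π$ induces identifications $H⁰(\wtilde{X}, \mathcal{F}) = H⁰(X, \mathcal{G})$ and, because $\wtilde{X} ∖ W = π^{-1}(X ∖ Z)$, also $H⁰(\wtilde{X} ∖ W, \mathcal{F}) = H⁰(X ∖ Z, \mathcal{G})$, under which \eqref{eq:A-1-1} becomes the restriction $H⁰(X, \mathcal{G}) → H⁰(X ∖ Z, \mathcal{G})$. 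The local-cohomology sequence for the pair $(X,Z)$ shows that this restriction is bijective as soon as the local cohomology sheaves $\mathcal{H}^0_Z(\mathcal{G})$ and $\mathcal{H}^1_Z(\mathcal{G})$ vanish, equivalently, as soon as $\mathcal{G}$ has depth at least two along $Z$. The same sheaf-level vanishing, read over arbitrary open subsets, is exactly the assertion of Corollary~\ref{cor:A-2}; so it suffices to prove that $\operatorname{depth}_{𝒪_{X,x}} \mathcal{G}_x ≥ 2$ for every point $x$ whose local ring has dimension at least $p+2$.

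This depth bound is where the numerical hypothesis enters, and it is worth isolating the genuine difficulty. The components of $W$ that have codimension $≥ 2$ in $\wtilde{X}$ contribute nothing, because sections of the locally free sheaf $\mathcal{F}$ extend across them by Hartogs. The real issue are the divisorial components of $W$: since $\codim_X Z ≥ 2$, each such divisor is necessarily $π$-exceptional and contracted into the high-codimension set $Z$. The claim is therefore that logarithmic $p$-forms acquire \emph{no} poles along exceptional divisors lying over centers of codimension $≥ p+2$ — precisely the Flenner phenomenon, now carrying a logarithmic twist, and it is the degree $p$ of the forms playing against the codimension $p+2$ that must make this work.

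To prove the depth bound I would argue by duality. On the smooth $n$-fold $\wtilde{X}$ the wedge pairing
\[
  Ω^p_{\wtilde{X}}(\log \wtilde{D}) ⊗ Ω^{n-p}_{\wtilde{X}}(\log \wtilde{D}) → Ω^n_{\wtilde{X}}(\log \wtilde{D}) = ω_{\wtilde{X}}(\wtilde{D})
\]
is perfect and identifies $\mathcal{F}^\vee ⊗ ω_{\wtilde{X}}$ with the twisted logarithmic sheaf $Ω^{n-p}_{\wtilde{X}}(\log \wtilde{D})(-\wtilde{D})$. Applying Grothendieck duality for the proper morphism $π$ together with local duality along $Z$, the vanishing of $\mathcal{H}^i_Z(\mathcal{G})$ for $i ≤ 1$ translates into the vanishing of cohomology with supports in $W$ of these twisted log forms in a complementary range of degrees. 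The hypothesis $\codim_X Z ≥ p+2$ controls the fibre dimensions of $π$ over $Z$, hence $\dim W$, and forces the relevant degrees to fall outside the nonvanishing range. The one non-formal ingredient — and the main obstacle — is a Hodge-theoretic local vanishing theorem for the twisted logarithmic sheaves $Ω^{q}_{\wtilde{X}}(\log \wtilde{D})(-\wtilde{D})$ along $W$: the logarithmic counterpart of the relative-duality-plus-mixed-Hodge-theory estimate underlying \cite{Flenner88, SS85}. Establishing this estimate in the logarithmic setting, and bookkeeping the degrees so that $p$ and $p+2$ cooperate, is the technical heart of the argument; everything else is formal. Finally, Corollary~\ref{cor:A-2} follows at once, since the sheaf vanishing $\mathcal{H}^0_Z(\mathcal{G}) = \mathcal{H}^1_Z(\mathcal{G}) = 0$ says exactly that $\mathcal{G} → ι_* ι^* \mathcal{G}$ is an isomorphism.
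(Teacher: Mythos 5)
Your opening reformulation is correct: the sheaf morphism $\pi_* \Omega^p_{\wtilde{X}}(\log \wtilde{D}) \to \iota_* \iota^* \pi_* \Omega^p_{\wtilde{X}}(\log \wtilde{D})$ is an isomorphism if and only if, for every open subset $U \subseteq X$, the restriction
\[
  H^0 \bigl( \pi^{-1}(U),\, \Omega^p_{\wtilde{X}}(\log \wtilde{D}) \bigr)
  \to
  H^0 \bigl( \pi^{-1}(U) \setminus \pi^{-1}(Z),\, \Omega^p_{\wtilde{X}}(\log \wtilde{D}) \bigr)
\]
is bijective. But at this point you have already won, and you do not seem to notice it: this bijectivity is precisely the statement of Theorem~\ref{thm:A-1} applied to the logarithmic pair $(U, D|_U)$, the restricted log resolution $\pi^{-1}(U) \to U$, and the closed subset $Z \cap U$, whose codimension bound $\codim_U (Z\cap U) \geq p+2$ is inherited from that of $Z$ in $X$ (the case $Z \cap U = \emptyset$ being trivial). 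Since the corollary is posed ``in the setting of Theorem~\ref{thm:A-1}'', that theorem is available as a prior result; this localization is the paper's entire proof, which is why the corollary carries an end-of-proof mark in its statement. No depth computation, no duality, and no Hodge theory are needed for the corollary itself.

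The genuine gap lies in what you substitute for this one-line deduction. You propose to prove the required vanishing of $\mathcal{H}^0_Z$ and $\mathcal{H}^1_Z$ of the pushforward ab initio, via Grothendieck duality for $\pi$, local duality along $Z$, and ``a Hodge-theoretic local vanishing theorem for the twisted logarithmic sheaves $\Omega^{q}_{\wtilde{X}}(\log \wtilde{D})(-\wtilde{D})$ along $W$'' --- and you explicitly leave that vanishing theorem unproved, calling it the technical heart of the argument. That deferred step is not bookkeeping: it is the entire non-formal content of Theorem~\ref{thm:A-1}, which the paper states has no prior appearance in the literature (Flenner \cite{Flenner88} and Steenbrink--van~Straten \cite{SS85} treat only the non-logarithmic case with $Z = X_{\sing}$), so it cannot be quoted and would have to be established. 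Note, moreover, that the paper's proof of Theorem~\ref{thm:A-1} deliberately avoids the route you sketch: following Graf \cite{MR4280499} and N\'u\~nez \cite{Nunez24}, it replaces duality for cohomology with supports by a contradiction argument built on residue sequences, restriction to general fibres of $\pi|_P$, and Bogomolov--Sommese vanishing, precisely because the duality-plus-Hodge-theory estimates are delicate in the logarithmic setting. So as a proof of Corollary~\ref{cor:A-2} your proposal is incomplete, and the missing idea is the simple one: the corollary requires no new argument at all beyond applying Theorem~\ref{thm:A-1} over every open subset of $X$.
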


Theorem~\ref{thm:A-1} will be shown in Section~\ref{sec:A-4} below.

\subsection{Idea of application}
\approvals{Erwan & yes \\ Frédéric & yes \\ Stefan & yes}

Theorem~\ref{thm:A-1} will typically be used in scenarios where the singular
locus $X_{\sing}$ comes with a stratification and $Z$ is a (potentially strict)
subvariety of $X_{\sing}$.  One can then consider restrictions,
\begin{align*}
  H⁰ \left( \wtilde{X},\, Ω^p_{\wtilde{X}} \bigl( \log \wtilde{D} \bigr) \right) & \overset{α}{↪} H⁰ \left( \wtilde{X} ∖ π^{-1}(Z),\, Ω^p_{\wtilde{X}}\bigl(\log \wtilde{D}\bigr) \right) \\
  & \overset{β}{↪} H⁰ \left( π^{-1}(X_{\reg}),\, Ω^p_{\wtilde{X}}\bigl(\log \wtilde{D}\bigr) \right) = H⁰ \left( X_{\reg},\, Ω^p_X\bigl(\log D\bigr) \right)
\end{align*}
and ask if a given form $σ° ∈ H⁰ \left( X_{\reg},\, Ω^p_X\bigl(\log D\bigr)
\right)$ is induced by a form on $\wtilde{X}$.  If one knows that $σ°$ lies in
the image of $β$ (for instance, because the singularities of $X$ are
sufficiently mild outside the stratum $Z$), then Theorem~\ref{thm:A-1} might
apply to show that $σ°$ already lies in the image of $β◦α$.

These ideas are put to work in Section~\ref{sec:4-3}, where we prove
Theorem~\ref{thm:1-6}.  There, $Z ⊂ X_{\sing}$ is a stratum of high
codimension, such that $X$ has no worse than quotient singularities outside $Z$.

\subsection{Relation to the literature}
\approvals{Erwan & yes \\ Frédéric & yes \\ Stefan & yes}

To the best of our knowledge, Theorem~\ref{thm:A-1} has not appeared in the
literature before.  The arguments used in the proof are however not new, and
likely known among experts.  Our proof combines ideas of Graf, who replaces
duality for cohomology with supports by numerical computation involving Kodaira
dimension and Bogomolov-Sommese vanishing, with work of Núñez, who follows
\cite{KS18} by expressing extension properties in terms of the dimension of the
support of certain local cohomology sheaves.

Despite appearance to the contrary, the results presented here are unrelated to
the general extension results of \cite{GKKP11, KS18} that are much more
difficult to prove.

\subsection{Proof of Theorem~\ref*{thm:A-1}}
\label{sec:A-4}
\approvals{Erwan & yes \\ Frédéric & yes \\ Stefan & yes}

For completeness' sake, we give a full and mostly self-contained proof,
combining results and ideas from \cite{MR4280499, Nunez24}.  The reader will
note that Steps~3--5 are copied with only minor modifications from
\cite[p.~599f]{MR4280499}.

\subsection*{Step 1: Setup}
\approvals{Erwan & yes \\ Frédéric & yes \\ Stefan & yes}

The restriction map \eqref{eq:A-1-1} is clearly injective.  To show
surjectivity, let
\[
  σ° ∈ H⁰ \left( \wtilde{X} ∖ π^{-1}(Z),\, Ω^p_{\wtilde{X}}\bigl(\log \wtilde{D}\bigr) \right)
\]
be any form.  We need to find a differential form $σ ∈ H⁰ \left( \wtilde{X},\,
Ω^p_{\wtilde{X}}\bigl(\log \wtilde{D}\bigr) \right)$ whose restriction to
$\wtilde{X} ∖ π^{-1}(Z)$ agrees with $σ°$.

\subsection*{Step 2: Simplification}
\approvals{Erwan & yes \\ Frédéric & yes \\ Stefan & yes}

The case where $D ∈ \Div(X)$ is the zero divisor has been worked out by Núñez in
\cite[Lem.~24]{Nunez24}.  Applying Núñez' result to $X ∖ \supp D$, we already
obtain a form on $\wtilde{X} ∖ π^{-1}(Z ∩ D)$ whose restriction to $\wtilde{X} ∖
π^{-1}(Z)$ agrees with $σ°$.  This allows making the following assumption.

\begin{asswlog}
  The set $Z$ is contained in the support of $D$.
\end{asswlog}

Since any two log resolutions of singularities are dominated by a common third,
it is clear that validity of Theorem~\ref{thm:A-1} does not depend on the choice
of the log resolution $π$.  Replacing $π$ by a different resolution if need be,
we may assume the following.

\begin{asswlog}
  The preimage $π^{-1}(Z)$ is of pure codimension one in $\wtilde{X}$ and has
  simple normal crossing support.
\end{asswlog}

Write $E ∈ \Div(\wtilde{X})$ for the reduced divisor supported on $π^{-1}(Z)$.

\subsection*{Step 3: Extension as a rational form}
\approvals{Erwan & yes \\ Frédéric & yes \\ Stefan & yes}

Grauert’s Direct Image Theorem, \cite[Chapt.~10.4]{CAS}, implies that the sheaf
$π_* Ω¹_{\wtilde{X}}\bigl(\log \wtilde{D}\bigr)$ is coherent.  The pull-back
$σ°$ therefore extends as a meromorphic log-form $τ$ on $\wtilde{X}$.
Quantifying the poles, let $G ∈ \Div\bigl(\wtilde{X}\bigr)$ be the minimal
effective divisor such that there exists a section
\[
  τ ∈ H⁰ \left( \wtilde{X},\, Ω^p_{\wtilde{X}}\bigl(\log \wtilde{D}\bigr)(G) \right)
  = \Hom \left( 𝒪_{\wtilde{X}}(-G),\, Ω^p_{\wtilde{X}}\bigl(\log \wtilde{D}\bigr) \right)
\]
whose restriction to $\wtilde{X} ∖ π^{-1}(Z)$ agrees with $σ°$.  Aiming to show
that $G = 0$, we argue by contradiction.

\begin{assumption}\label{ass:A-5}%
  The divisor $G$ is not zero.
\end{assumption}

Minimality of $G$ implies that $G$ is $π$-exceptional.  Since $π$ is a log
resolution, $G$ will then have simple normal crossing support.
Assumption~\ref{ass:A-5} and \cite[Prop.~4]{MR4280499} imply the existence of an
irreducible component $P ⊆ \supp G$ such that the invertible sheaf
$𝒪_{\wtilde{X}}(-G)$ is big when restricted a general fibre of $π|_P$.
Minimality of $G$ guarantees that the restricted map of sheaves on $P$,
\[
  τ|_P : \left.𝒪_{\wtilde{X}}(-G)\right|_P → \left.Ω^p_{\wtilde{X}}\bigl(\log \wtilde{D}\bigr)\right|_P,
\]
does not vanish.

\subsection*{Step 4: Residue sequence}
\approvals{Erwan & yes \\ Frédéric & yes \\ Stefan & yes}

Considering the divisor $P^c := (E - P)|_P ∈ \Div(P)$, the residue sequence for
$p$-forms along $P$ reads
\[
  \begin{tikzcd}
    & & \left.𝒪_{\wtilde{X}}(-G)\right|_P \ar[d, "τ|_P"] \\
    0 \ar[r] & Ω^p_P(\log P^c) \ar[r] & \left.Ω_{\wtilde{X}}^p(\log E)\right|_P \ar[r, "\res_P"'] & Ω_P^{p-1}(\log P^c) \ar[r] & 0.
  \end{tikzcd}
\]
We obtain an injection
\[
  μ : 𝒪_{\wtilde{X}}(-G)|_P ↪ Ω_P^r(\log P^c),
\]
for one $r ∈ \{ p - 1, p \}$.  In both cases, $r ≤ p$.

\subsection*{Step 5: Restriction to the general fibre}
\approvals{Erwan & yes \\ Frédéric & yes \\ Stefan & yes}
\CounterStep

For brevity of notation, write $B := π(P)$ and denote the restricted morphism
$π|_P$ by $ρ : P → B$.  Next, let $F ⊂ P$ be a general fibre of $ρ$ and consider
the restricted divisor $F^c := P^c|_F ∈ \Div(F)$.  Since $(P, P^c)$ is an snc
pair, so is $(F, F^c)$.  The inequality $\codim_X Z ≥ p + 2$ implies that
\begin{align}
  \dim F & = \dim P - \dim B \nonumber\\
  & ≥ (\dim X - 1) - \dim Z \label{eq:A-6-1} \\
  & ≥ (\dim X - 1) - (\dim X - (p + 2)) = p + 1.  \nonumber
\end{align}
Since $F$ is general and $ρ(F)$ is a smooth point of $B$, the restriction of the
standard sequence of relative log differentials, \cite[Sec.~4.1]{EV92}, reads
\[
  0 → \left.ρ^*Ω_B¹\right|_F → \left.Ω_P¹(\log P^c)\right|_F → \left.Ω_{P/B}¹(\log P^c)\right|_F → 0.
\]
By \cite[Ch.~II, Ex.~5.16]{Ha77}, it induces a filtration of $Ω_P^r(\log
P^c)|_F$ with quotients
\[
  ρ^*Ω_Bⁱ ⊗ Ω_{P/B}^{r-i}(\log P^c), \quad i = 0, …, r.
\]
Restricting the morphism $μ$ to the general fibre $F$, we obtain an injection
\[
  μ|_F : 𝒪_{\wtilde{X}}(-G)|_F ↪ ρ^*Ω_Bⁱ ⊗ Ω_{P/B}^{r-i}(\log P^c) = Ω_F^{r-i}(\log F^c)^{⊕ \rank Ω_Bⁱ},
\]
for one suitable number $i$.  Projecting onto a suitable summand, we even find
an inclusion
\[
  𝒪_{\wtilde{X}}(-G)|_F ↪ Ω_F^{r-i}(\log F^c).
\]
This leads to a contradiction.
\begin{itemize}
  \item On the one hand, the classic Bogomolov-Sommese vanishing theorem,
    \cite[Cor.~6.9]{EV92}, implies that $κ\left( 𝒪_{\wtilde{X}}(-G)|_F \right)
    ≤ r - i ≤ r ≤ p$.

  \item On the other hand, the choice of $G$ guarantees that
    $𝒪_{\wtilde{X}}(-G)$ is big when restricted a general fibre of $π|_P$, so
    that $κ\left( 𝒪_{\wtilde{X}}(-G)|_F \right) = \dim F$.  However, we have
    seen in \eqref{eq:A-6-1} that $\dim F ≥ p + 1$.
\end{itemize}
Assumption~\ref{ass:A-5} is therefore absurd.  We conclude that $G = 0$, as
desired.  \qed


\newcommand{\etalchar}[1]{$^{#1}$}

\end{document}